\newtheorem{thm}{Theorem}[section]
\newtheorem{defn}{Definition}[section]
\newtheorem{prop}{Proposition}[section]
\newtheorem{lemma}{Lemma}[section]
\newtheorem{rmk}{Remark}[section]
\newcommand{\aiminset}[1]{\left\{#1 \right\}}
\newcommand{\aiminabs}[1]{\left\lvert #1 \right\rvert }
\newcommand{\aiminnorm}[1]{\left\| #1 \right\| }
\newcommand{\aimininner}[2]{\langle #1, #2 \rangle}
\newcommand{\aiminRmnum}[1]{ \uppercase\expandafter{\romannumeral  #1}}
\numberwithin{equation}{section}
\numberwithin{figure}{section}
\let\oldtocsection=\tocsection
\let\oldtocsubsection=\tocsubsection
\let\oldtocsubsubsection=\tocsubsubsection
\renewcommand{\tocsection}[2]{\hspace{0em}\oldtocsection{#1}{#2}}
\renewcommand{\tocsubsection}[2]{\hspace{2em}\oldtocsubsection{#1}{#2}}
\renewcommand{\tocsubsubsection}[2]{\hspace{4em}\oldtocsubsubsection{#1}{#2}}
\begin{document}
\title{The linear hyperbolic initial and boundary value problems in a domain with corners}
\author{Aimin Huang}
\author{Roger Temam}
\address{The Institute for Scientific Computing and Applied Mathematics, Indiana University, 831 East Third Street, Rawles Hall, Bloomington, Indiana 47405, U.S.A.}
\email{AH:aimhuang@indiana.edu}
\email{RT:temam@indiana.edu}
\date{\today}

\begin{abstract}
In this article, we consider linear hyperbolic Initial and Boundary Value Problems (IBVP) in a rectangle (or possibly curvilinear polygonal domains) in both the constant and variable coefficients cases. We use semigroup method instead of Fourier analysis to achieve the well-posedness of the linear hyperbolic system, and we find by diagonalization that there are only two elementary modes in the system which we call hyperbolic and elliptic modes. The hyperbolic system in consideration is either symmetric or Friedrichs-symmetrizable.
\end{abstract}

\maketitle

\setcounter{tocdepth}{2}
\tableofcontents
\addtocontents{toc}{~\hfill\textbf{Page}\par}

\section{Introduction}
The aim of this article is to investigate the well-posedness of initial and boundary value problems for linear hyperbolic systems in a domain with corners, and the main difficulty here is the choice of the boundary conditions. General hyperbolic systems in a smooth domain have been extensively studied; see e.g. the book\cite{BS07}, where the boundary conditions satisfy the Uniform Kreiss-Lopatinskii condition (UKL), see \cite{Kre70,Lop70}. 
Linear scalar hyperbolic equation in general (non-smooth) domains has been studied in \cite[Chapter V]{GR96}, and linear hyperbolic systems in regions with corners is a subject of mathematical concern since the seminal works \cite{Osh73, Osh74} which show the possible occurrence of major singularities in the corners for certain choices of the boundary conditions, and in \cite{Sar77}, where the author investigated symmetrizable systems in regions with corners and proved that the weak solution of the problem is equal to the strong solution, and in \cite{Tan78,KT80}, where the authors studied the wave equation, which can be transformed into a linear symmetric hyperbolic system, in a domain with a corner; in the last article the space by the authors used is not the usual $L^2$-space.  In the present article, we shall restrict our attention to linear hyperbolic systems 
\begin{equation}\label{eq1.0.1}
  u_t + A_1 u_x + A_2 u_y = f, 
\end{equation}
which are either symmetric or Friedrichs-symmetrizable (see \cite[Chapter 1]{BS07}), and the main results are about the well-posedness of system \eqref{eq1.0.1} supplemented with suitable boundary conditions; see Theorem~\ref{thm2.4.2} in the constant coefficients case and Subsection \ref{subsec3.3} in the variable coefficients case.

The major difference between our work and the previous works is in the way to impose the boundary conditions. In \cite{Osh73,Osh74,Tan78,KT80}, the authors imposed the boundary conditions for the two directions $Ox, Oy$ separately. 
In an earlier work \cite{HT12} we studied the case of the inviscid linearized shallow water equations with constant coefficients in a rectangle, and we found that there are essentially two different modes  which we called hyperbolic and elliptic modes, and we showed in this article that we only have these two modes in the hyperbolic system \eqref{eq1.0.1} under the assumption that $A_1^{-1}A_2$ is diagonalizable over $\mathbb C$. 
As we will see in the proof of Theorem \ref{thm2.3.1} in Appendix \ref{sec-diagonal}, the real eigenvalues of $A_1^{-1}A_2$ correspond to hyperbolic modes, while the complex eigenvalues correspond to elliptic modes.
Hence, the way to impose the boundary conditions is to specify the boundary conditions separately for these two modes.
Another difference with earlier works is that in \cite{Osh73,Osh74,Tan78,KT80}, the authors only considered the constant coefficients case for the hyperbolic system and used Fourier transform to analyze the problem, while in this article we consider both the constant and variable coefficients cases by using the semigroup method. 
In the variable coefficients case, we need to utilize the results for Beltrami equations and quasi-conformal mapping to obtain a coordinate transformation which transforms the simple elliptic system (mode) into Cauchy-Riemann equations, see the proof of Theorem \ref{thma.2} in Appendix \ref{sec-elliptic}. This is unlike the constant coefficients case, where we can explicitly write down the coordinate transformation, see the proof of \cite[Proposition 4.1]{HT12}.

This article is motivated by studies on the Local Area Models (LAMs) for the 2d inviscid shallow water equations as done in \cite{RTT08}, \cite{HT12} and is actually a continuation and generalization of \cite{HT12}. The study of LAMs in the atmosphere and oceans sciences leads to IBVP for the inviscid primitive equations in these domains. As explained in e.g. \cite{WPT97} the choice of the boundary conditions is important for the numerical simulations, as one wishes boundary conditions leading to well-posed problems (to avoid numerical explosion) and boundary conditions that are transparent, letting the waves move freely inside and outside the domain. Thanks to a vertical expansion procedure described in \cite{OS78, TT03}, the primitive equations are equivalent, in some circumstances, to a system of coupled shallow water equations; see also \cite{RTT08, HT12}.

This article is organized as follows. After this introductory section, we study the linear hyperbolic initial and boundary value problems in a rectangle in the constant and variable coefficients cases in Sections \ref{sec2} and \ref{sec3}, respectively. In both sections, we first study two simple cases (i.e. hyperbolic and elliptic cases), and then utilize the diagonalization Theorem~\ref{thm2.3.1} in Appendix~\ref{sec-diagonal} to study the full system. Section \ref{sec4} is devoted to applying the general results of Section~\ref{sec2} to some specific examples. 
Appendix~\ref{sec-diagonal} aims to prove the simultaneous diagonalization by congruence result (i.e.  Theorem~\ref{thm2.3.1}), which generalizes the diagonalization result in \cite{Uhl73}; note that in \cite{HT12} this simultaneous diagonalization is performed by explicit calculations (see the equations (1.8) and (1.12) in \cite{HT12}), and this result is essential for studying the full hyperbolic system \eqref{eq1.0.1}. In Appendix~\ref{sec-elliptic}, we consider the elliptic boundary value problem
and in the Appendix~\ref{sec-integration}, we prove an integration by parts formula, and these two appendices 
are the key ingredients for studying the model elliptic system appearing in Subsections \ref{subsec2.2},\,\ref{subsec3.2} and that we called above the elliptic mode.
Then in Appendix~\ref{sec-density}, we derive various density theorems, density of certain smooth functions in certain function spaces, which are very useful in Subsections \ref{subsec2.1},\,\ref{subsec3.1}. In Appendix~\ref{sec-semigroup}, we collect and prove some useful theorems about semigroups on Hilbert spaces.

\section{The constant coefficients case}\label{sec2}
We now begin to study the linear hyperbolic Initial and Boundary Value Problems (IBVP). Let $n$ be a positive integer. The hyperbolic partial differential equations that we consider read
\begin{equation}\label{eq2.0.1}
  u_t + A_1 u_x + A_2 u_y = f,
\end{equation}
where $u=(u_1,\cdots,u_n)^t$, $f=(f_1,\cdots,f_n)^t$, and $A_1,A_2$ are $n\times n$ real matrices. 
The system \eqref{eq2.0.1} is called \emph{Friedrichs symmetric}, or simply \emph{symmetric} if both $A_1$ and $A_2$ are symmetric; 
and the system \eqref{eq2.0.1} is called \emph{Friedrichs symmetrizable} if there exists a symmetric positive-definite matrix $S_0$ such that $S_0A_1$ and $S_0A_2$ are both symmetric. This more general case can be transformed into a \emph{symmetric} system if we use the new unknown $\tilde u:=S_0^{1/2}u$. Therefore, we only need to consider the \emph{symmetric} case.
We do not study the non-generic case where $A_1$ or $A_2$ is singular. In this section, we will develop a general theory of the IBVP for \eqref{eq2.0.1} in the constant coefficients case, and in the next section, we will consider the variable coefficients case. 

The physical domain in which we are interested is the rectangle
$$\Omega:=(0,L_1)\times (0,L_2),$$
where $L_1,L_2$ are positive constants. We denote by $\Gamma_W,\Gamma_E$ and $\Gamma_S,\Gamma_N$ the boundaries $x=0,x=L_1$ and $y=0,y=L_2$ respectively, and we let $\Gamma$ be any union of the sets $\Gamma_W,\Gamma_E,\Gamma_S,\Gamma_N$. We also write $\Omega_T=\Omega\times (0,T)$.

In order to study the system \eqref{eq2.0.1}, we will first consider two elementary cases in Subsections \ref{subsec2.1} and \ref{subsec2.2}, and then study the full system \eqref{eq2.0.1} by diagonalization result in Appendix~\ref{sec-diagonal}. The diagonalization result is essential for studying the full system \eqref{eq2.0.1}.

\subsection{The scalar (hyperbolic) case}\label{subsec2.1}
In this subsection, we start by considering the elementary scalar equation (the case when $n=1$ in \eqref{eq2.0.1})
\begin{equation}\label{eq2.1.1}
u_t + a_1 u_x + a_2 u_y = f,
\end{equation}
where $(x,y)\in\Omega$, $t\in(0,T)$, and $a_1,a_2$ are non-zero constants. We associate with \eqref{eq2.1.1} the following initial condition
\begin{equation}\label{eq2.1.2}
  u(0,x,y)=u_0(x,y),\quad\forall\,(x,y)\in\Omega.
\end{equation}
We also need to assign the boundary conditions, and we only need to choose the boundary conditions for the parts of $\partial\Omega$ corresponding to the incoming characteristics. For that reason, we have four cases to consider depending on the signs of $a_1$ and $a_2$. Here, we consider the case when $a_1,a_2>0$, and the other cases would be similar. Hence, we choose the boundary conditions at $x=0$ and $y=0$, and we consider the homogeneous case, that is
\begin{equation}\begin{split}\label{eq2.1.3}
  u=0,\quad\text{ on }\Gamma_W\cup\Gamma_S=\aiminset{x=0}\cup\aiminset{y=0}.\\
\end{split}\end{equation}
Note that the boundary conditions that we choose are \emph{strictly dissipative} (see \cite[Section 3.2]{BS07}), and hence satisfy the \emph{uniform Kreiss-Lopatinskii} condition (see e.g. \cite{Kre70,Lop70} or \cite[Chapter 4]{BS07}).
\begin{rmk}\label{rmk2.1.1}
Let us briefly present the other cases for the boundary conditions which are suitable for \eqref{eq2.1.1} based on the signs of $a_1$ and $a_2$.

\noindent If $a_1>0$ and $a_2<0$, then we choose the boundary conditions
\begin{equation}\begin{split}\label{eq2.1.3-1}
  u=0,\quad\text{ on }\Gamma_W\cup\Gamma_N=\aiminset{x=0}\cup\aiminset{y=L_2}.\\
\end{split}\end{equation}
If $a_1<0$ and $a_2>0$, then we choose the boundary conditions
\begin{equation}\begin{split}\label{eq2.1.3-2}
  u=0,\quad\text{ on }\Gamma_E\cup\Gamma_S=\aiminset{x=L_1}\cup\aiminset{y=0}.\\
\end{split}\end{equation}
If $a_1<0$ and $a_2<0$, then we choose the boundary conditions
\begin{equation}\begin{split}\label{eq2.1.3-3}
  \ u=0,\quad\text{ on }\Gamma_E\cup\Gamma_N=\aiminset{x=L_1}\cup\aiminset{y=L_2}.\\
\end{split}\end{equation}
\end{rmk}

The well-posedness of the initial and boundary value problem \eqref{eq2.1.1}-\eqref{eq2.1.3} will be proven using the classical Hille-Yosida theorem. We first define the unbounded operator $\mathcal T_1$ on $H:=L^2(\Omega)$ with $\mathcal T_1 u=a_1u_x + a_2u_y$, $\forall\,u\in\mathcal D(\mathcal T_1)$, and 
\[
\mathcal D(\mathcal T_1)=\aiminset{u\in H=L^2(\Omega)\,:\,\mathcal T_1u=a_1u_x + a_2u_y \in H,\text{ and }u\text{ satisfies }\eqref{eq2.1.3}}.
\]
If $u$ and $\mathcal T_1u$ both belong to $L^2(\Omega)$, then the traces of $u$ at the boundary are well-defined by Proposition \ref{M-propb.2}. Hence, the domain $\mathcal D(\mathcal T_1)$ is well-defined.

We now introduce the density boundary conditions corresponding to \eqref{eq2.1.3}:
\begin{equation}\label{eq2.1.3p}\tag{\ref{eq2.1.3}$'$}
  u\text{ vanishes in a neighborhood of }\Gamma_W\cup\Gamma_S,
\end{equation}
and the function space:
\[
\mathcal V(\Omega)=\aiminset{u\in\mathcal C^\infty(\overline\Omega)\,:\,u\text{ satisfies }\eqref{eq2.1.3p}}.
\]
Applying Theorem \ref{L-thm1} with $\lambda = a_1/a_2$, we obtain that 
\begin{lemma}\label{lem2.1.1}
$\mathcal V(\Omega)\cap\mathcal D(\mathcal T_1)$ is dense in $\mathcal D(\mathcal T_1)$.
\end{lemma}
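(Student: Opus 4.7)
Since the lemma is stated as a direct application of Theorem \ref{L-thm1} of Appendix \ref{sec-density} with $\lambda = a_1/a_2$, my plan is to sketch the mechanism that I expect the appendix theorem to implement in the present setting. The strategy has two ingredients, a translation in the characteristic direction followed by a one-sided mollification keyed to the same direction.

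\emph{Step 1 (translation).} Given $u\in\mathcal{D}(\mathcal{T}_1)$, I would first extend $u$ by zero to the L-shaped set $\tilde\Omega := \Omega \cup \{x<0\} \cup \{y<0\}$, producing $\hat u$. Since $u$ has vanishing trace on $\Gamma_W\cup\Gamma_S$ (meaningful by Proposition \ref{M-propb.2}), an integration by parts against a test function compactly supported in $\tilde\Omega$ shows that $\mathcal{T}_1\hat u$ coincides with the extension of $\mathcal{T}_1 u$ by zero, and hence lies in $L^2(\tilde\Omega)$. Setting
\[
u_\epsilon(x,y) := \hat u(x-\epsilon a_1,\,y-\epsilon a_2),\qquad \epsilon>0,
\]
the positivity of $a_1,a_2$ ensures that the translated argument stays in $\tilde\Omega$ for $(x,y)\in\overline\Omega$, and $u_\epsilon$ vanishes in the strip $\{x<\epsilon a_1\}\cup\{y<\epsilon a_2\}$, which is a neighborhood of $\Gamma_W\cup\Gamma_S$. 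Continuity of translation in $L^2$ then gives $u_\epsilon\to u$ and $\mathcal{T}_1 u_\epsilon\to\mathcal{T}_1 u$ in $L^2(\Omega)$ as $\epsilon\to 0$.

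\emph{Step 2 (mollification).} I would then convolve $u_\epsilon$ with a smooth mollifier $\rho_\delta$ of support in $[0,\delta]^2$ (a one-sided mollifier pointing in the direction $-(a_1,a_2)$), with $\delta \ll \epsilon\min(a_1,a_2)$. For any $(x,y)\in\overline\Omega$ the convolution only samples $u_\epsilon$ in the square $[x-\delta,x]\times[y-\delta,y]$, and every such sample lies in the domain of $u_\epsilon$ (either inside $\Omega+\epsilon(a_1,a_2)$ or in the strip where $u_\epsilon$ already vanishes). The resulting $u_{\epsilon,\delta}$ is smooth on $\overline\Omega$, still vanishes on a neighborhood of $\Gamma_W\cup\Gamma_S$, and therefore belongs to $\mathcal{V}(\Omega)\cap\mathcal{D}(\mathcal{T}_1)$. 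The standard $L^2$ convergence of mollification, applied to both $u_\epsilon$ and $\mathcal{T}_1 u_\epsilon$, gives $u_{\epsilon,\delta}\to u_\epsilon$ in the graph norm of $\mathcal{T}_1$ as $\delta\to 0$, and a diagonal extraction then yields the density.

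The main obstacle is the behavior near the outflow walls $\Gamma_E\cup\Gamma_N$, where no boundary condition on $u$ is imposed. A symmetric mollifier would require an extension of $u$ across these walls, and any such extension would generically produce a spurious surface Dirac in the distributional derivative $\mathcal{T}_1$ and destroy $L^2$ control of the graph norm. The one-sided mollifier pointing strictly in the $-(a_1,a_2)$ direction is precisely what avoids reading any values outside $\overline\Omega$ through those walls; since this direction is dictated by the operator, the ratio $\lambda = a_1/a_2$ appears naturally as the parameter of Theorem \ref{L-thm1}.
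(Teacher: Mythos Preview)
Your argument is correct: the zero extension across $\Gamma_W\cup\Gamma_S$ (legitimate precisely because the trace of $u$ vanishes there), followed by a small translation in the $(a_1,a_2)$ direction and a mollification that never samples beyond $\Gamma_E\cup\Gamma_N$, does produce approximants in $\mathcal V(\Omega)\cap\mathcal D(\mathcal T_1)$ converging in the graph norm. Two cosmetic remarks: $\tilde\Omega=\Omega\cup\{x<0\}\cup\{y<0\}$ is unbounded rather than L-shaped, and a mollifier supported in $[0,\delta]^2$ looks in the direction of the third quadrant rather than specifically $-(a_1,a_2)$; neither affects the validity of the argument.

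The paper itself does not spell out the proof of Theorem~\ref{L-thm1}, quoting it from \cite{HT12}; but the variable--coefficient analogue that the appendix \emph{does} detail (Proposition~\ref{T-prop3.1}, Theorem~\ref{T-thm3.1}) proceeds by a partition of unity adapted to the corners and edges of $\Omega$, with a one-sided mollifier chosen separately for each local chart, together with the Friedrichs commutator lemma. Your global translate--then--mollify argument is more economical because in the constant-coefficient case translation commutes with $\mathcal T_1$, so a single shift handles all boundary pieces at once and no localization is needed. The price is that the shortcut does not survive to variable coefficients: there one cannot translate globally without destroying $\mathcal T_1 u$, and the partition-of-unity route (with Lemma~\ref{S-lem3.2extra} to control commutators) becomes essential. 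So both approaches hinge on the same one-sided-mollifier idea you identify at the end; the paper's organization simply privileges the method that extends to Section~\ref{sec3}.
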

\subsubsection{Positivity of $\mathcal T_1$ and its adjoint $\mathcal T_1^*$}
The proofs of the positivity of  $\mathcal T_1$ and its adjoint $\mathcal T_1^*$  in this subsection are similar or simpler to those of \cite[Subsection 3.1.1]{HT12}, and for the sake of completeness, we give the full details here.

Our aim is to prove that $\mathcal T_1$ and its adjoint $\mathcal T_1^*$ defined below are positive in the sense,
\begin{equation}\label{eq2.1.4}
\begin{cases}
\aimininner{ \mathcal T_1u}{u}_H \geq 0,\hspace{6pt}\forall\, u\in\mathcal{D}(\mathcal T_1),\\
\aimininner{ \mathcal T_1^*u}{u}_H \geq 0,\hspace{6pt}\forall\, u\in\mathcal{D}(\mathcal T_1^*).
\end{cases}
\end{equation}
These properties are needed to apply the Hille-Phillips-Yoshida theorem (see Appendix \ref{sec-semigroup}). The result for $\mathcal T_1$ is easy, and actually for $u$ smooth in $\mathcal{D}(\mathcal T_1)$, integrating by parts and using the boundary conditions \eqref{eq2.1.3} yields
\begin{equation}\begin{split}
\aimininner{ \mathcal T_1u}{u}_H &= \int_\Omega (a_1u_x+a_2u_y)u\,\text{d}x\text{d}y \\
&=\frac12\int_0^{L_2} a_1u^2\big|_{x=L_1}\,\text{d}y + \frac12\int_0^{L_1} a_2u^2\big|_{y=L_2}\,\text{d}x\\
&\geq 0.
\end{split}\end{equation}
This is also valid for all $u$ in $\mathcal{D}(\mathcal T_1)$ thanks to Lemma \ref{lem2.1.1}. Therefore, $\mathcal T_1$ is positive.

We now turn to the definition of the formal adjoint $\mathcal T_1^*$ of $\mathcal T_1$ and its domain $\mathcal{D}(\mathcal T_1^*)$, in the sense of the adjoint of a linear unbounded operator (see \cite{Rud91}). For that purpose, we assume that $u\in\mathcal{D}(\mathcal T_1)$ and $\bar u\in H=L^2(\Omega)$ are smooth functions, and then compute
\begin{equation}\begin{split}\label{eq2.1.6}
\aimininner{ \mathcal T_1u}{\bar u}_H &= \int_\Omega (a_1u_x+a_2u_y)\bar u\,\text{d}x\text{d}y \\
&=(\text{using integrations by parts and the boundary conditions }\eqref{eq2.1.3})\\
&=\int_\Omega(-a_1\bar u_x-a_2\bar u_y)u\,\text{d}x\text{d}y + \int_0^{L_2} (a_2u\bar u)\big|_{x=L_1} \,\text{d}y 
+\int_0^{L_1} (a_2u\bar u)\big|_{y=L_2} \,\text{d}x\\
&=J_0 + J_1,
\end{split}\end{equation}
where $J_0$ stands for the integral on $\Omega$ and $J_1$ for the integral on $\partial\Omega$.

Classically (see e.g. \cite{Rud91}), $\mathcal{D}(\mathcal T_1^*)$ consists of the $\bar u$ in $H=L^2(\Omega)$ such that $u\mapsto\aimininner{\mathcal T_1u}{\bar u}$ is continuous on $\mathcal{D}(\mathcal T_1)$ for the topology (norm) of $H$. If $u$ is restricted to the class of $\mathcal{C}^\infty$ functions with compact support in $\Omega$, then $J_1$ vanishes and $u\mapsto J_0$ can only be continuous if $-a_1\bar u_x-a_2\bar u_y$ belongs to $H$. If $\bar u$ and $-a_1\bar u_x-a_2\bar u_y$ both belong to $H$, the traces of $\bar u$ are well-defined on $\partial\Omega$ by  Proposition \ref{M-propb.2}. Hence, the calculations in \eqref{eq2.1.6} are now valid for any such $\bar u$ (and $u$ smooth in $\mathcal{D}(\mathcal T_1)$). We now restrict $u$ to the class of $\mathcal{C}^\infty$ functions on $\overline\Omega$ which belongs to $\mathcal{D}(\mathcal T_1)$.
Then the expressions above of $J_0$ and $J_1$ show that 
$u\mapsto\aimininner{\mathcal T_1u}{\bar u}$ can only be continuous in $u$ for the topology (norm) of $H$ if the following boundary conditions are satisfied:
\begin{equation}\label{eq2.1.7}
\bar u=0, \text{ on } \Gamma_E\cup\Gamma_N = \{x=L_1\}\cup\{y=L_2\}.
\end{equation}
We now aim to show that
\begin{equation}\label{eq2.1.8}
\mathcal{D}(\mathcal T_1^*) = \aiminset{ \bar u\in H=L^2(\Omega)\,:\,-a_1\bar u_x-a_2\bar u_y\in H, \text{ and } \bar u\text{ satisfies } \eqref{eq2.1.7}};
\end{equation}
we first conclude from the above arguments that $\mathcal{D}(T_1^*)$ is included in the space temporarily denoted by $\widetilde{\mathcal{D}}(T_1^*)$, the right-hand side of \eqref{eq2.1.8}.
To prove that $\widetilde{\mathcal{D}}(T_1^*)\subset \mathcal{D}(T_1^*)$, we only need to observe that \eqref{eq2.1.6} holds when $u\in\mathcal{D}(T_1)$ and $\bar u\in\widetilde{\mathcal{D}}(T_1^*)$ in which case \eqref{eq2.1.6} reduces to $\aimininner{\mathcal T_1u}{\bar u}=\aimininner{u}{-a_1\bar u_x-a_2\bar u_y}$. This is proved by approximation observing that the smooth functions are dense in $\mathcal{D}(T_1)$ and in $\widetilde{\mathcal{D}}(T_1^*)$ respectively. The former density result has already been proven (see Lemma \ref{lem2.1.1}), and the later one is the object of Lemma \ref{lem2.1.2} below.
Hence, if $\bar u\in \widetilde{\mathcal{D}}(T_1^*)$, then the calculations \eqref{eq2.1.6} are valid, $J_1$ vanishes, and $u\mapsto\aimininner{\mathcal T_1u}{\bar u}$ is continuous on $\mathcal{D}(\mathcal T_1)$ for the norm of $H$. Therefore we conclude that $\widetilde{\mathcal{D}}(T_1^*)\subset \mathcal{D}(T_1^*)$ and thus \eqref{eq2.1.8} holds. We then set $\mathcal T_1^*\bar u=-a_1\bar u_x-a_2\bar u_y$, $\forall\,\bar u\in\mathcal{D}(\mathcal T_1^*)$.

Let us introduce the density boundary conditions corresponding to \eqref{eq2.1.7}
\begin{equation}\label{eq2.1.7p}\tag{\ref{eq2.1.7}$'$}
\bar u \text{ vanishes in a neighborhood of }\Gamma_E\cup\Gamma_N,
\end{equation}
and define the following space of smooth functions
\begin{equation*}
\mathcal{V}^*(\Omega) = \{ \bar u\in \mathcal{C}^\infty(\overline{\Omega})\,:\,\bar u \text{ satisfies \eqref{eq2.1.7p}} \}.
\end{equation*}
Applying Theorem \ref{L-thm1} and Remark \ref{L-rmk2}, we have
\begin{lemma}\label{lem2.1.1-2}
$\mathcal{V}^*(\Omega)\cap\widetilde{\mathcal{D}}(\mathcal T_1^*)$ is dense in $\widetilde{\mathcal{D}}(\mathcal T_1^*)$.
\end{lemma}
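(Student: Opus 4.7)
The plan is to reduce the claim to Lemma~\ref{lem2.1.1} by a rigid change of variables that swaps the two pairs of adjacent sides of the rectangle. Concretely, I would introduce the reflection $\Phi(x,y) = (L_1-x,\, L_2-y)$, which is an isometry of $\overline\Omega$ onto itself exchanging $\Gamma_W\cup\Gamma_S$ with $\Gamma_E\cup\Gamma_N$. Under the pullback $\bar u \mapsto v := \bar u\circ\Phi$, the formal adjoint operator $-a_1\partial_x-a_2\partial_y$ transforms into $a_1\partial_x + a_2\partial_y$, the boundary condition \eqref{eq2.1.7} becomes exactly \eqref{eq2.1.3}, and \eqref{eq2.1.7p} becomes \eqref{eq2.1.3p}. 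Since $\Phi$ has unit Jacobian it preserves the $L^2$ inner product, and being a smooth diffeomorphism of $\overline\Omega$ it preserves $\mathcal C^\infty(\overline\Omega)$. Hence it induces an isometric isomorphism from $\widetilde{\mathcal D}(\mathcal T_1^*)$ onto $\mathcal D(\mathcal T_1)$ sending $\mathcal V^*(\Omega)\cap\widetilde{\mathcal D}(\mathcal T_1^*)$ onto $\mathcal V(\Omega)\cap\mathcal D(\mathcal T_1)$.

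With this reduction in hand, the density claim transports back from Lemma~\ref{lem2.1.1}. Equivalently, since the ratio $a_1/a_2>0$ is unchanged by the reflection, one can apply Theorem~\ref{L-thm1} directly with the same $\lambda = a_1/a_2$, invoking Remark~\ref{L-rmk2} to accommodate the fact that the relevant ``incoming corner'' is now at $(L_1,L_2)$ rather than at the origin. This is the route the text already points to with the phrase \emph{``Applying Theorem \ref{L-thm1} and Remark \ref{L-rmk2}''}.

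I do not expect any real obstacle: the argument reduces to verifying that $\Phi$ commutes with the analytic operations used to set up $\widetilde{\mathcal D}(\mathcal T_1^*)$ — namely $L^2$ norms, traces on $\partial\Omega$ (as given by Proposition~\ref{M-propb.2}), and first-order differentiation — together with the observation that the neighborhood condition in \eqref{eq2.1.7p} pulls back under the homeomorphism $\Phi$ to the neighborhood condition in \eqref{eq2.1.3p}. The only mildly delicate point, if one chooses to bypass the change of variables and invoke Theorem~\ref{L-thm1} directly, is to confirm that Remark~\ref{L-rmk2} indeed covers the configuration in which smooth functions are required to vanish near a corner opposite the one treated in the base statement of Theorem~\ref{L-thm1}; the reflection argument above makes this transparent.
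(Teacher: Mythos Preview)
Your proposal is correct and is essentially the paper's own approach: the paper simply invokes Theorem~\ref{L-thm1} together with Remark~\ref{L-rmk2} (the case $\Gamma=\Gamma_E\cup\Gamma_N$, $\lambda=a_1/a_2>0$), and your reflection $\Phi(x,y)=(L_1-x,L_2-y)$ is precisely the change of variables that reduces this case of Remark~\ref{L-rmk2} to the base case of Theorem~\ref{L-thm1}. You have just made explicit the one-line reduction that Remark~\ref{L-rmk2} leaves implicit.
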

Since we derived that $\widetilde{\mathcal{D}}(T_1^*) = \mathcal{D}(T_1^*)$, Lemma \ref{lem2.1.1-2} shows that 
\begin{lemma}\label{lem2.1.2}
$\mathcal{V}^*(\Omega)\cap\mathcal{D}(\mathcal T_1^*)$ is dense in $\mathcal{D}(\mathcal T_1^*)$.
\end{lemma}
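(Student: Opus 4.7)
The plan is to obtain Lemma \ref{lem2.1.2} as an essentially immediate consequence of Lemma \ref{lem2.1.1-2} together with the set equality $\widetilde{\mathcal{D}}(\mathcal T_1^*)=\mathcal{D}(\mathcal T_1^*)$ that was derived just before the statement. The only thing to check is that the ambient topologies with respect to which we speak of density match up, so that density inside $\widetilde{\mathcal{D}}(\mathcal T_1^*)$ transfers verbatim to density inside $\mathcal{D}(\mathcal T_1^*)$.

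First, I would record the natural norms on the two spaces. On $\mathcal{D}(\mathcal T_1^*)$ the standard topology (the one for which density statements in a Hille--Yosida framework are relevant) is the graph norm $\|\bar u\|_H+\|\mathcal T_1^*\bar u\|_H$, and by the very definition given right after \eqref{eq2.1.8} one has $\mathcal T_1^*\bar u = -a_1\bar u_x-a_2\bar u_y$. On $\widetilde{\mathcal{D}}(\mathcal T_1^*)$, whose definition is intrinsic (membership in $H$ of $\bar u$ and of $-a_1\bar u_x-a_2\bar u_y$, plus the boundary condition \eqref{eq2.1.7}), the natural norm is $\|\bar u\|_H+\|-a_1\bar u_x-a_2\bar u_y\|_H$. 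These two norms are literally identical, so $\widetilde{\mathcal{D}}(\mathcal T_1^*)=\mathcal{D}(\mathcal T_1^*)$ is not merely a set equality but an equality of normed spaces.

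Second, the subsets $\mathcal{V}^*(\Omega)\cap\widetilde{\mathcal{D}}(\mathcal T_1^*)$ and $\mathcal{V}^*(\Omega)\cap\mathcal{D}(\mathcal T_1^*)$ also coincide as sets, for the trivial reason that their ambient big space is the same. Therefore Lemma \ref{lem2.1.1-2}, which gives density of the former in $\widetilde{\mathcal{D}}(\mathcal T_1^*)$ for the common norm above, immediately yields density of the latter in $\mathcal{D}(\mathcal T_1^*)$ for the graph norm of $\mathcal T_1^*$. This is exactly the statement of Lemma \ref{lem2.1.2}.

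In this sense there is no real obstacle: all the substantive work was already done in proving Lemma \ref{lem2.1.1-2} (via Theorem \ref{L-thm1} and Remark \ref{L-rmk2}, which handle the approximation of functions in $\widetilde{\mathcal{D}}(\mathcal T_1^*)$ by smooth functions vanishing near $\Gamma_E\cup\Gamma_N$) and in the integration-by-parts argument that identified $\widetilde{\mathcal{D}}(\mathcal T_1^*)$ with $\mathcal{D}(\mathcal T_1^*)$. The only point one might be tempted to worry about is whether the two characterizations of the adjoint's domain carry different topologies, but because $\mathcal T_1^*$ is defined to be the differential expression $-a_1\partial_x-a_2\partial_y$ acting on $\mathcal{D}(\mathcal T_1^*)$, the graph norm reduces precisely to the norm used in Lemma \ref{lem2.1.1-2}, and the transfer is automatic.
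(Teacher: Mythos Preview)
Your proposal is correct and follows exactly the paper's approach: the paper derives Lemma \ref{lem2.1.2} in one sentence from Lemma \ref{lem2.1.1-2} together with the identification $\widetilde{\mathcal{D}}(\mathcal T_1^*)=\mathcal{D}(\mathcal T_1^*)$. Your additional remark that the graph norm of $\mathcal T_1^*$ coincides with the natural norm on $\widetilde{\mathcal{D}}(\mathcal T_1^*)$ (so that the equality is one of normed spaces, not just sets) is a useful clarification that the paper leaves implicit.
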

The proof of the positivity of $\mathcal T_1^*$ is similar to the proof for $\mathcal T_1$ using Lemma \ref{lem2.1.2}, we thus omit it here.

\subsubsection{Semigroup} We are now ready to prove the main theorem in this subsection.
\begin{thm}\label{thm2.1.1}
The operator $-\mathcal T_1$ is the infinitesimal generator of a contraction semigroup on $H=L^2(\Omega)$.
\end{thm}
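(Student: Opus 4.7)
The plan is to verify the hypotheses of the Hille--Phillips--Yosida theorem collected in Appendix~\ref{sec-semigroup}: namely, to show that $\mathcal{T}_1$ is a closed, densely defined operator on $H = L^2(\Omega)$ such that both $\mathcal{T}_1$ and its adjoint $\mathcal{T}_1^*$ are positive (accretive). The positivity of both operators has already been established in the previous subsection, so what remains is the functional-analytic framing.

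First I would check that $\mathcal{D}(\mathcal{T}_1)$ is dense in $H$. This is immediate from Lemma~\ref{lem2.1.1}: the space $\mathcal{V}(\Omega) \cap \mathcal{D}(\mathcal{T}_1)$ contains $\mathcal{C}^\infty_c(\Omega)$ (test functions have compact support in $\Omega$, hence vanish near $\Gamma_W \cup \Gamma_S$ trivially), and $\mathcal{C}^\infty_c(\Omega)$ is already dense in $L^2(\Omega)$. Next I would verify that $\mathcal{T}_1$ is closed. This follows from the distributional nature of the differential operator: if $u_n \to u$ in $H$ with $\mathcal{T}_1 u_n \to g$ in $H$, then $a_1\partial_x u + a_2 \partial_y u = g$ in $\mathcal{D}'(\Omega)$, so $\mathcal{T}_1 u \in H$; the boundary conditions in \eqref{eq2.1.3} are preserved in the limit because, by Proposition~\ref{M-propb.2}, the trace map from the graph space $\{u \in H : \mathcal{T}_1 u \in H\}$ to a suitable boundary space is continuous, so convergence in the graph norm forces the trace of $u$ on $\Gamma_W \cup \Gamma_S$ to vanish.

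With closedness and dense domain in hand, and with the two positivity estimates
\[
\langle \mathcal{T}_1 u, u\rangle_H \geq 0 \ \forall\, u\in \mathcal{D}(\mathcal{T}_1), \qquad \langle \mathcal{T}_1^* \bar u, \bar u\rangle_H \geq 0 \ \forall\, \bar u\in \mathcal{D}(\mathcal{T}_1^*),
\]
I would invoke the version of the Hille--Phillips--Yosida theorem in Appendix~\ref{sec-semigroup} which states that a closed, densely defined operator on a Hilbert space whose adjoint is also well-defined and such that both the operator and its adjoint are monotone (accretive) is maximal accretive, and hence its negative generates a contraction semigroup. Concretely, the positivity of $\mathcal{T}_1^*$ furnishes the range condition $\mathrm{Range}(I + \mathcal{T}_1) = H$ (the usual argument: injectivity of $I + \mathcal{T}_1^*$ from its positivity, combined with closedness of the range, which follows from the a priori estimate $\|(I+\mathcal{T}_1) u\|_H \geq \|u\|_H$ coming from the positivity of $\mathcal{T}_1$).

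The main obstacle, to the extent there is one, is bookkeeping rather than mathematical: the genuine content sits in the positivity of $\mathcal{T}_1^*$, which in turn rests on the density Lemma~\ref{lem2.1.2} used to justify the integration-by-parts identity on all of $\mathcal{D}(\mathcal{T}_1^*)$, and on the correct identification \eqref{eq2.1.8} of the adjoint's domain (including the dual boundary conditions \eqref{eq2.1.7} on the \emph{outflow} part $\Gamma_E \cup \Gamma_N$). Once these are granted, the proof of Theorem~\ref{thm2.1.1} is just an application of the abstract generation theorem.
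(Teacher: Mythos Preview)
Your proposal is correct and matches the paper's own proof essentially line for line: both invoke Theorem~\ref{S-thm2.2} (Hille--Phillips--Yosida), note that the positivity of $\mathcal{T}_1$ and $\mathcal{T}_1^*$ was already established, obtain density of $\mathcal{D}(\mathcal{T}_1)$ from the inclusion $\mathcal{D}(\Omega)\subset\mathcal{D}(\mathcal{T}_1)$, and prove closedness via distributional convergence together with the continuity of the trace map from Proposition~\ref{M-propb.2}. The only cosmetic difference is that the paper also states the closedness and density for $\mathcal{T}_1^*$ separately (with the same proof), whereas you implicitly use that the adjoint of a closed densely defined operator is automatically closed and densely defined.
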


\begin{proof}
According to Theorem \ref{S-thm2.2}, it suffices to show that
\begin{enumerate}[(i)]
\item $\mathcal T_1$ and $\mathcal T_1^*$ are both closed, and their domains $\mathcal{D}(\mathcal T_1)$ and $\mathcal{D}(\mathcal T_1^*)$ are both dense in $H$.
\item $\mathcal T_1$ and $\mathcal T_1^*$ are both positive.
\end{enumerate}
We already proved that $\mathcal T_1$ and $\mathcal T_1^*$ are both positive, we thus only need to prove (i). 
We establish the result for $\mathcal T_1$, and the proof for $\mathcal T_1^*$ would be similar.

Observing that $\mathcal{D}(\Omega)$ is included in $\mathcal{D}(\mathcal T_1)$ and dense in $H=L^2(\Omega)$, we thus obtain $\mathcal{D}(\mathcal T_1)$ is dense in $H$. 
To show that $\mathcal T_1$ is closed. Consider a sequence $\{u_n\}_{n\in\mathbb{N}}\subset \mathcal{D}(\mathcal T_1)$ for which $\lim_{n\rightarrow\infty}u_n=u$ in $L^2(\Omega)$ and $\lim_{n\rightarrow\infty}\mathcal T_1u_n =\hat u$ in $L^2(\Omega)$.
We first have that $\partial_xu_n, \partial_yu_n$ converge to $\partial_xu, \partial_yu$ in the sense of distributions, respectively. Hence,  $\mathcal T_1u_n$ converges to $\mathcal T_1u$ in the sense of distributions, which implies that $\mathcal T_1u$ equals $\hat u$ in the sense of distributions. Therefore, we conclude that $\mathcal T_1u$ belongs to $L^2(\Omega)$ since $\hat u$ belongs to $L^2(\Omega)$.
We thus have $u, \mathcal T_1u\in L^2(\Omega)$, which shows that the traces of $u$ on $\partial\Omega$ are well-defined thanks to Proposition~\ref{M-propb.2}. By Proposition~\ref{M-propb.2} again, the traces of $u_n$ converge weakly to the traces of $u$ in the appropriate space $H^{-1}$, so that $u$ satisfies the boundary conditions \eqref{eq2.1.3}. Therefore, we conclude that $u\in\mathcal{D}(\mathcal T_1)$ and that $\mathcal T_1$ is closed. The proof is complete. 
\end{proof}

\begin{rmk}\label{rmk2.1.2}
Looking back carefully at the proof of Theorem~\ref{thm2.1.1}, we see that Theorem~\ref{thm2.1.1} is still valid if the domain $\mathcal D(\mathcal T_1)$ is properly changed when we are in one of the other cases presented in Remark~\ref{rmk2.1.1}.
\end{rmk}

\begin{rmk}\label{rmk2.1.3}
The result in Theorem~\ref{thm2.1.1} can also be extended to the curvilinear polygonal domains instead of rectangle as long as we assign the boundary conditions for the incoming characteristics. Indeed, in \cite[pp. 419-423]{GR96}, the authors provided an explicit way to find the solution for the IBVP \eqref{eq2.1.1} in curvilinear polygonal domains, and using similar arguments as we did for Theorem~\ref{thm2.1.1}, we could also obtain a semigroup structure for the operator $\mathcal T_1$ in curvilinear polygonal domains.

Here we avoid the non-generic case where the vector field $(a_1,a_2)$ is parallet to one side of the polygon. In fact this case could be treated with some precautions as in e.g. \cite{RTT08}.

\end{rmk}

With Theorem~\ref{thm2.1.1} (and Remark \ref{rmk2.1.2}) at hand, we are able to solve the initial and  boundary value problem \eqref{eq2.1.1}-\eqref{eq2.1.3} either weakly or classically under suitable assumptions on $u_0$ and $f$. We do not state these results in this subsection, but we will state these results for the full system in Subsection \ref{subsec2.4}.

\subsection{The simple (elliptic) system case}\label{subsec2.2}
In this subsection, we consider the following simple system (the case when $n=2$ in \eqref{eq2.0.1})
\begin{equation}\begin{cases}\label{eq2.2.1}
  u_t+T_1u_x + T_2u_y = f,\\
  u(0)=u_0,
\end{cases}\end{equation}
where $u=(u_1,u_2)^t$, $f=(f_1,f_2)^t$, and 
\begin{equation*}
T_1=\begin{pmatrix}
\alpha_1 & \beta_1\\
\beta_1 & -\alpha_1
\end{pmatrix},\hspace{6pt}
T_2=\begin{pmatrix}
\alpha_2 & \beta_2\\
\beta_2 & -\alpha_2
\end{pmatrix}.
\end{equation*}
Here, we assume that $\alpha_1,\alpha_2,\beta_1,\beta_2$ are four real constants satisfying
 \begin{equation}\label{asp2.2.1}
 \alpha_2\beta_1-\alpha_1\beta_2> 0.
 \end{equation}
Note that under assumption \eqref{asp2.2.1}, both $T_1$ and $T_2$ are non-singular.
 
We also need to assign the boundary conditions for \eqref{eq2.2.1}, and we choose the boundary conditions for the operator $\mathcal T_2$ defined below to be positive (see Proposition \ref{prop2.2.2}). For that reason, we have four cases to consider depending on the signs of $\alpha_1$ and $\alpha_2$. Notice that one of $\alpha_1,\alpha_2$ must be non-zero by assumption \eqref{asp2.2.1}. Here, we consider the case when $\alpha_1,\alpha_2\geq 0$, and the other cases would be similar.
Hence, we choose the following homogeneous boundary conditions
\begin{equation}\label{eq2.2.2}
\begin{cases}
 u_1 = 0 \text{ on } \Gamma=\Gamma_W\cup\Gamma_S=\aiminset{x=0}\cup\aiminset{y=0}, \\
 u_2 = 0 \text{ on } \Gamma^c=\Gamma_E\cup\Gamma_N=\aiminset{x=L_1}\cup\aiminset{y=L_2}.
 \end{cases}
\end{equation}
Here $\Gamma^c$ is the complement of $\Gamma$ with respect to the boundary $\partial\Omega$. 

Let us first give several remarks about the choice of the boundary conditions, and then we will show that the boundary conditions \eqref{eq2.2.2} will lead to a well-posedness result for \eqref{eq2.2.1}.
\begin{rmk}\label{rmk2.2.1}
Let us present the other cases of the boundary conditions which are suitable for \eqref{eq2.2.1} based on the signs of $\alpha_1$ and $\alpha_2$.

\noindent If $\alpha_1\geq 0$ and $\alpha_2<0$, then we choose the boundary conditions
\begin{equation}\label{eq2.2.2-1}
\begin{cases}
 u_1 = 0 \text{ on } \Gamma_W\cup\Gamma_N=\aiminset{x=0}\cup\aiminset{y=L_2}, \\
 u_2 = 0 \text{ on } \Gamma_E\cup\Gamma_S=\aiminset{x=L_1}\cup\aiminset{y=0}.
 \end{cases}
\end{equation}
If $\alpha_1< 0$ and $\alpha_2\geq 0$, then we choose the boundary conditions
\begin{equation}\label{eq2.2.2-2}
\begin{cases}
 u_1 = 0 \text{ on } \Gamma_W\cup\Gamma_S=\aiminset{x=L_1}\cup\aiminset{y=0}, \\
 u_2 = 0 \text{ on } \Gamma_E\cup\Gamma_N=\aiminset{x=0}\cup\aiminset{y=L_2}.
 \end{cases}
\end{equation}
If $\alpha_1< 0$ and $\alpha_2<0$, then we choose the boundary conditions
\begin{equation}\label{eq2.2.2-3}
\begin{cases}
 u_1 = 0 \text{ on } \Gamma_E\cup\Gamma_N=\aiminset{x=L_1}\cup\aiminset{y=L_2}, \\
 u_2 = 0 \text{ on } \Gamma_W\cup\Gamma_S=\aiminset{x=0}\cup\aiminset{y=0}.
 \end{cases}
\end{equation}
\end{rmk}
\begin{rmk}\label{rmk2.2.1-1}
Besides the options presented in Remark \ref{rmk2.2.1}, we can also choose other sets of boundary conditions depending on the relative signs of $\alpha_1,\beta_1$ and $\alpha_2,\beta_2$, which will lead to similar results. Indeed, we only need to consider a change of variables as follows:
\begin{equation}
v=\begin{pmatrix}
v_1\\
v_2
\end{pmatrix}=Qu:=\frac{1}{\sqrt{1+\kappa^2}}\begin{pmatrix}
\kappa&-1\\
1&\kappa
\end{pmatrix}\begin{pmatrix}
u_1\\
u_2
\end{pmatrix}=\frac{1}{\sqrt{1+\kappa^2}}\begin{pmatrix}
\kappa u_1-u_2\\
u_1+\kappa u_2
\end{pmatrix},
\end{equation}
where $\kappa$ is a non-zero constant.
Then we rewrite \eqref{eq2.2.1} in the new variable $v=(v_1,v_2)$:
\begin{equation}\begin{cases}
  v_t+\overline T_1v_x + \overline T_2v_y = \bar f,\\
  v(0)=v_0,
\end{cases}\end{equation}
where $v_0=Qu_0$, $\bar f=Qf$, and 
\begin{equation*}
\overline T_1=QT_1Q^{-1}=\begin{pmatrix}
\bar\alpha_1 & \bar\beta_1\\
\bar\beta_1 & -\bar\alpha_1
\end{pmatrix},\hspace{6pt}
\overline T_2=QT_2Q^{-1}=\begin{pmatrix}
\bar\alpha_2 & \bar\beta_2\\
\bar\beta_2 & -\bar\alpha_2
\end{pmatrix}.
\end{equation*}
Here, we have
\begin{equation}\begin{split}
  \bar\alpha_1 &=\frac{(\kappa^2-1)\alpha_1 - 2\kappa\beta_1}{1+\kappa^2},\quad
  \bar\alpha_2=\frac{(\kappa^2-1)\alpha_2 - 2\kappa\beta_2}{1+\kappa^2},\\
  \bar\beta_1&=\frac{(\kappa^2-1)\beta_1 + 2\kappa\alpha_1}{1+\kappa^2},\quad
  \bar\beta_2=\frac{(\kappa^2-1)\beta_2 + 2\kappa\alpha_2}{1+\kappa^2},
\end{split}\end{equation}
and
\[
 \bar\alpha_2\bar\beta_1-\bar\alpha_1\bar\beta_2 = \alpha_2\beta_1-\alpha_1\beta_2.
\]
Therefore, once we know the signs of $\bar\alpha_1,\bar\alpha_2$, we can choose the boundary conditions for $v$ and thus for $u$ according to \eqref{eq2.2.2} and Remark \ref{rmk2.2.1}. This remark shows that there are infinitely many sets of boundary conditions for the system \eqref{eq2.2.1} which will lead to a well-posedness result.\qed
\end{rmk}
\begin{rmk}\label{rmk2.2.1-2}
The choice of the boundary conditions is actually very flexible as long as the boundary conditions we choose will imply that the operator $\mathcal T_2$ defined below is positive (see Proposition \ref{prop2.2.2}). This flexibility comes from Theorem \ref{thma.1} and Remark \ref{rmka.1} in Appendix \ref{sec-elliptic} since the main ingredient in the proof of Proposition \ref{prop2.2.1} and Theorem \ref{thm2.2.1} below is Theorem \ref{thma.1}, which is an extension of \cite[Proposition 4.1]{HT12}. For example, if we assume that $\alpha_1,\alpha_2,\beta_1,\beta_2$ are four real positive constants that satisfy \eqref{asp2.2.1}, then we can choose the following boundary conditions:
\begin{equation}\begin{split}
u_1&=0\,\text{ on }\Gamma_W, \quad\quad u_1 - u_2 = 0\,\text{ on }\Gamma_E,\\
u_1+u_2&=0\,\text{ on }\Gamma_S, \quad\quad\quad\quad\;\, u_2 = 0\,\text{ on }\Gamma_N,
\end{split}\end{equation}
which will make the operator $\mathcal T_2$ defined below positive. \qed
\end{rmk}

We are also going to use the classical Hille-Yosida theorem as in Subsection \ref{subsec2.1} to solve the IBVP \eqref{eq2.2.1}-\eqref{eq2.2.2}, and we thus define the unbounded operator $\mathcal T_2$ on $H^2:=L^2(\Omega)^2$ by setting
\begin{equation}\label{eq2.2.3}
\mathcal T_2 u=T_1 u_x+T_2 u_y,\quad \forall\,u\in  \mathcal D(\mathcal T_2),
\end{equation}
and
\[
 \mathcal D(\mathcal T_2)=\aiminset{  u=(u_1,u_2)^t\in H^2=L^2(\Omega)^2\,:\, \mathcal T_2 u\in H^2, u\text{ satisfies \eqref{eq2.2.2} } }.
\]
In order to prove that $\mathcal T_2$ generates a semigroup on $H^2$, unlike in the Subsection \ref{subsec2.1} where we prove that both $\mathcal T_1$ and its adjoint $\mathcal T_1^*$ are positive, we prove here that $\mathcal T_2$ is positive and that $\mathcal T_2+\omega$ is onto for all $\omega>0$.

We now introduce the function space
\begin{equation*}
V=\{ u=(u_1,u_2)^t\in H^1(\Omega)^2 \ |\ u\text{ satisfies \eqref{eq2.2.2} } \}.
\end{equation*}
Note that thanks to the Poincar\'e inequality $\aiminnorm{\nabla u}_{L^2(\Omega)^2}$ is a norm on $V$ equivalent to that of $H^1(\Omega)^2$.

\subsubsection{Properties of $\mathcal T_2$}
Note that the operator $\mathcal T_2 $ is the operator $\mathcal T$ in \cite[Section 4.1]{HT12},
and we have already proven the following results in \cite[Theorems 4.1 and 4.2]{HT12}.
\begin{prop}\label{prop2.2.1}
The domain $\mathcal D(\mathcal T_2)$ of $\mathcal T_2$ is the space $V$, and the following estimate
 \begin{equation}\label{eq2.2.8}
 \frac{1}{c_1}\aiminnorm{\nabla u}_{L^2(\Omega)^2}\leq \aiminnorm{\mathcal T_2 u}_{L^2(\Omega)^2}\leq c_2 \aiminnorm{\nabla u}_{L^2(\Omega)^2},
 \end{equation}
holds for some constants $c_1,c_2>0$.
\end{prop}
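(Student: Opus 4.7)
The plan is to prove the identity $\mathcal{D}(\mathcal{T}_2)=V$ together with the two-sided estimate \eqref{eq2.2.8}, splitting the argument into an easy upper bound and the harder lower bound / regularity statement. The easy half is essentially tautological: if $u\in V$, then $u\in H^1(\Omega)^2$ and satisfies \eqref{eq2.2.2}, so $\mathcal{T}_2 u = T_1 u_x+T_2 u_y$ lies in $L^2(\Omega)^2$ and the boundary conditions embed $V$ into $\mathcal{D}(\mathcal{T}_2)$, while the pointwise inequality $|\mathcal{T}_2 u|\leq (|T_1|+|T_2|)|\nabla u|$ gives the right-hand inequality with $c_2:=|T_1|+|T_2|$.

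The reverse inclusion $\mathcal{D}(\mathcal{T}_2)\subset V$ and the lower estimate rest on the fact that $\mathcal{T}_2$ is a first-order elliptic operator. A direct computation yields
\[
\det(T_1\xi+T_2\eta)= -(\alpha_1\xi+\alpha_2\eta)^2-(\beta_1\xi+\beta_2\eta)^2,
\]
which by assumption \eqref{asp2.2.1} is strictly negative for every $(\xi,\eta)\neq 0$. My plan is therefore to reduce the inhomogeneous system $\mathcal{T}_2 u = g$ to the standard Cauchy--Riemann system by a linear change of both dependent and independent variables, following the explicit recipe of \cite[Proposition 4.1]{HT12}. The transformed domain is a parallelogram (which can be further affinely rescaled), and the boundary conditions \eqref{eq2.2.2} translate into mixed Dirichlet conditions on the transformed unknown that are properly aligned with the Cauchy--Riemann structure.

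Once the reduction is carried out, I invoke Theorem \ref{thma.1} from Appendix \ref{sec-elliptic}, which asserts $H^1$-wellposedness together with the corresponding a priori estimate for the Cauchy--Riemann system on a polygonal domain with precisely this mixed Dirichlet data. Pulling back through the affine (hence harmless) change of variables gives $u\in H^1(\Omega)^2$ and $\|\nabla u\|_{L^2}\leq c_1\|\mathcal{T}_2 u\|_{L^2}$. Combined with the boundary conditions inherited from $\mathcal{D}(\mathcal{T}_2)$, now taken in the strong trace sense, this yields $u\in V$ and the remaining inequality of \eqref{eq2.2.8}.

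The main obstacle is the corner behaviour. For a generic first-order elliptic system on a polygon with boundary conditions that switch type at each vertex, one expects $H^s$-singularities for some $s<1$; obtaining full $H^1$-regularity here is delicate and depends on the specific pairing \eqref{eq2.2.2} matching the Cauchy--Riemann structure and the rectangular geometry in just the right way. The real analytical content is thus packaged into Theorem \ref{thma.1} of Appendix \ref{sec-elliptic}; granted that result, everything else reduces to an explicit change of variables together with a density argument based on Appendix \ref{sec-density}, which extends the a priori estimate from smooth elements of $\mathcal{D}(\mathcal{T}_2)$ to the whole domain.
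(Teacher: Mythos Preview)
Your overall architecture is right and matches the paper, which defers to \cite[Theorems~4.1--4.2]{HT12}; the key external input there is indeed the constant-coefficient precursor of Theorem~\ref{thma.1}. But you misread what Theorem~\ref{thma.1} actually supplies: as stated it gives existence and uniqueness of an $H^1$ solution only for data $\Psi\in\mathcal{C}^1_c(\Omega)^2$, not an a~priori bound for general $L^2$ data, and certainly not the conclusion that every $u\in\mathcal{D}(\mathcal{T}_2)$ lies in $H^1$. The lower bound in \eqref{eq2.2.8} is obtained by a different, direct route: one expands $\aiminnorm{\mathcal{T}_2 u}_{L^2}^2$ for $u\in V$, and the cross term proportional to $\int_\Omega(u_{2x}u_{1y}-u_{1x}u_{2y})$ is eliminated via Lemma~\ref{lema.1} and the boundary conditions \eqref{eq2.2.2}, leaving a quadratic form in $\nabla u$ that is coercive thanks to \eqref{asp2.2.1}. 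This is precisely the computation Subsection~\ref{subsec3.2} calls ``the integrals of (4.9) in \cite{HT12}'' and which reappears in the uniqueness part of Theorem~\ref{thma.3}. With that estimate in hand on $V$, Theorem~\ref{thma.1} is used only to show that $\mathcal{T}_2|_V$ has dense range in $L^2(\Omega)^2$, so that an approximation argument upgrades any $u\in\mathcal{D}(\mathcal{T}_2)$ to $V$.

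Separately, Appendix~\ref{sec-density} concerns the scalar hyperbolic operator $\theta\mapsto\theta_y+\lambda\theta_x$ and does not apply to $\mathcal{T}_2$; Remark~\ref{rmk2.2.3} explicitly says that no such density results are needed for the elliptic mode, precisely because the elliptic regularity already places $\mathcal{D}(\mathcal{T}_2)$ inside $H^1$. So the missing analytical step in your plan is not a further reduction or density lemma, but the explicit expansion of $\aiminnorm{\mathcal{T}_2 u}^2$ combined with Lemma~\ref{lema.1}.
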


\begin{prop}\label{prop2.2.2}
The operator $\mathcal T_2$ is positive, i.e. for any $u\in\mathcal D(\mathcal T_2) = V$:
\begin{equation}
  \aimininner{\mathcal T_2 u}{u}\geq 0.
\end{equation}
\end{prop}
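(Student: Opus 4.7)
The plan is to exploit the symmetry of $T_1, T_2$ to rewrite $\langle \mathcal T_2 u, u\rangle$ as the integral of a divergence, and then check that the resulting boundary integral is a sum of manifestly non-negative terms once the boundary conditions \eqref{eq2.2.2} are taken into account.

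First I would observe that because $T_1$ and $T_2$ are symmetric,
\begin{equation*}
(T_1 u_x)\cdot u = \tfrac12 \partial_x(T_1 u\cdot u),\qquad (T_2 u_y)\cdot u = \tfrac12 \partial_y(T_2 u\cdot u).
\end{equation*}
By Proposition \ref{prop2.2.1} we have $\mathcal D(\mathcal T_2)=V\subset H^1(\Omega)^2$, so the trace of $u$ on $\partial\Omega$ is well-defined and the divergence theorem applies. Therefore
\begin{equation*}
\langle \mathcal T_2 u, u\rangle = \tfrac12\int_{\partial\Omega}\bigl[(T_1 u\cdot u)\,n_1 + (T_2 u\cdot u)\,n_2\bigr]\,dS,
\end{equation*}
where $(n_1,n_2)$ is the outward unit normal. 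A direct computation gives $T_j u\cdot u = \alpha_j(u_1^2-u_2^2)+2\beta_j u_1 u_2$ for $j=1,2$.

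The heart of the proof is then the edge-by-edge evaluation. On $\Gamma_W$ (normal $(-1,0)$) the boundary condition forces $u_1=0$, so $T_1 u\cdot u = -\alpha_1 u_2^2$, giving the non-negative contribution $\tfrac12\int_0^{L_2}\alpha_1 u_2^2\bigr|_{x=0}\,dy$ since $\alpha_1\ge 0$. On $\Gamma_E$ (normal $(1,0)$) we have $u_2=0$, so $T_1 u\cdot u = \alpha_1 u_1^2$ and the contribution is again $\ge 0$. The two horizontal edges $\Gamma_S$ and $\Gamma_N$ are handled identically using $T_2 u\cdot u$ together with $\alpha_2\ge 0$: the $\beta_j u_1 u_2$ cross terms drop out on every side because exactly one of $u_1, u_2$ vanishes there, and the surviving terms all come in with the correct sign. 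Summing the four contributions yields $\langle \mathcal T_2 u, u\rangle\ge 0$.

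I do not expect any serious obstacle. The only points requiring a bit of care are the correct pairing of the vanishing component with the outward normal on each side of $\partial\Omega$, and the justification of the integration by parts in the non-smooth setting, which however is automatic from $V\subset H^1(\Omega)^2$ and standard trace theory, so no density argument is needed. The hypothesis $\alpha_2\beta_1-\alpha_1\beta_2>0$ is not used in this positivity proof itself (it is used elsewhere to ensure $T_1,T_2$ are non-singular); only the sign conditions $\alpha_1,\alpha_2\ge 0$ are invoked, exactly as one should expect since Remark~\ref{rmk2.2.1} prescribes the boundary conditions precisely so that positivity holds.
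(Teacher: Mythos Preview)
Your proof is correct and follows essentially the same approach as the paper: the paper refers to \cite[Theorem 4.2]{HT12} for this result, but the variable-coefficient analogue (Proposition~\ref{prop3.2.2}) is proved in the paper by exactly your computation---integrate by parts using the symmetry of $T_1,T_2$, then check the boundary terms edge by edge using the boundary conditions \eqref{eq2.2.2} and the sign assumption $\alpha_1,\alpha_2\ge 0$. In the constant-coefficient case the bulk terms $T_{1x},T_{2y}$ vanish and one gets positivity rather than quasi-positivity, which is precisely what you wrote.
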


\subsubsection{The surjectivity of $\mathcal T_2$}
It is clear that $\mathcal T_2$ is a linear continuous operator from $V$ to $L^2(\Omega)^2$ and it is one-to-one by Proposition \ref{prop2.2.1}. We can also prove that $\mathcal T_2$ is onto, and we actually prove the following result.
\begin{thm}\label{thm2.2.1}
For every given $f=(f_1,f_2)^t\in L^2(\Omega)^2$ and $\omega\geq 0$, the problem $\mathcal T_2 u + \omega u= f$ has a unique solution $u\in \mathcal D(\mathcal T_2)=V$.
\end{thm}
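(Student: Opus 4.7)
The plan is to use the variational framework already established for $\mathcal T_2$ together with the elliptic boundary value problem theory of Appendix \ref{sec-elliptic}. I would separate the argument into uniqueness, which is almost immediate from Propositions \ref{prop2.2.1} and \ref{prop2.2.2}, and existence, which in the case $\omega=0$ is essentially the content of Theorem \ref{thma.1}, and in the case $\omega>0$ follows from a Fredholm alternative on $L^2(\Omega)^2$.

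For uniqueness, I would test the homogeneous equation $\mathcal T_2 u + \omega u = 0$ against $u$ in $L^2(\Omega)^2$. By Proposition \ref{prop2.2.2}, $\aimininner{\mathcal T_2 u}{u} \geq 0$ and $\omega \aiminnorm{u}^2 \geq 0$, so both quantities vanish. If $\omega>0$ this immediately gives $u\equiv 0$; if $\omega=0$ then $\mathcal T_2 u = 0$, and the lower estimate in Proposition \ref{prop2.2.1} forces $\aiminnorm{\nabla u}_{L^2} \leq c_1 \aiminnorm{\mathcal T_2 u}_{L^2}=0$, so that $u$ is constant and the mixed boundary conditions \eqref{eq2.2.2}, which prescribe $u_1=0$ on one pair of faces and $u_2=0$ on the complementary pair, force $u\equiv 0$.

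For existence, the cornerstone is the case $\omega = 0$. Under assumption \eqref{asp2.2.1}, the system $T_1u_x+T_2u_y=f$ can be brought by an explicit linear change of coordinates to the inhomogeneous Cauchy--Riemann system, the reduction already carried out in \cite[Proposition 4.1]{HT12}, and the boundary conditions \eqref{eq2.2.2} are transformed into boundary conditions of the type covered by Theorem \ref{thma.1}. Applying that theorem to the transformed system and pulling the solution back produces a unique $u\in V$ with $\aiminnorm{u}_V \leq C\aiminnorm{f}_{L^2}$; equivalently, $\mathcal T_2 : V\to L^2(\Omega)^2$ is a topological isomorphism.

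To extend to $\omega > 0$, I would rewrite $(\mathcal T_2 + \omega I)u = f$ as $u + \omega \mathcal T_2^{-1} u = \mathcal T_2^{-1} f$. Since $\mathcal T_2^{-1}$ maps $L^2(\Omega)^2$ continuously into $V\subset H^1(\Omega)^2$, it is compact as an operator on $L^2(\Omega)^2$ by the Rellich--Kondrachov theorem. Hence $I + \omega\mathcal T_2^{-1}$ is Fredholm of index zero, and the uniqueness step above shows it is injective, so it is an isomorphism of $L^2(\Omega)^2$, and the corresponding $u$ lies in $V=\mathcal D(\mathcal T_2)$. The main obstacle is really the $\omega=0$ case, which is external to Subsection \ref{subsec2.2}: it rests on the elliptic theory for Cauchy--Riemann--type systems in the rectangle with corners, packaged into Theorem \ref{thma.1}. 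Once that input is granted, the rest of the proof is routine bookkeeping with positivity, compactness, and the Fredholm alternative.
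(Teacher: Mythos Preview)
Your proof is correct but follows a genuinely different route from the paper's. The paper proceeds via a single Lax--Milgram argument on $V$, using the (non-symmetric) bilinear form
\[
a(u,\bar u) = \aimininner{\mathcal T_2 u}{\mathcal T_2 \bar u} + \omega \aimininner{u}{\mathcal T_2 \bar u},
\]
whose continuity and coercivity follow directly from the two-sided estimate \eqref{eq2.2.8} of Proposition~\ref{prop2.2.1} together with the positivity in Proposition~\ref{prop2.2.2}; the variational solution is then identified with a strong solution of $\mathcal T_2 u+\omega u=f$ because the range of $\mathcal T_2$ contains $\mathcal C^1_c(\Omega)^2$ (Theorem~\ref{thma.1}) and is therefore dense in $L^2(\Omega)^2$. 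This handles all $\omega\ge 0$ uniformly.

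Your approach instead first establishes the case $\omega=0$ (that $\mathcal T_2: V\to L^2(\Omega)^2$ is a topological isomorphism) and then treats $\omega>0$ by a Fredholm alternative via the compactness of $\mathcal T_2^{-1}$. This is valid and perhaps more transparent; just note that Theorem~\ref{thma.1} is stated only for data $\Psi\in\mathcal C^1_c(\Omega)^2$, so to reach arbitrary $f\in L^2(\Omega)^2$ you should make the closed-range step explicit (the lower bound in \eqref{eq2.2.8} shows $\mathcal T_2$ has closed range, and Theorem~\ref{thma.1} shows this range contains the dense set $\mathcal C^1_c(\Omega)^2$). The paper's Lax--Milgram route avoids splitting into cases and does not invoke Rellich--Kondrachov, but both arguments rest on the same three ingredients: the a~priori estimate \eqref{eq2.2.8}, positivity of $\mathcal T_2$, and the elliptic solvability result of Appendix~\ref{sec-elliptic}.
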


The proof of Theorem \ref{thm2.2.1} is the same as the proof of \cite[Theorem 4.3]{HT12} except that we need to replace the bilinear form $a$ in \cite{HT12} by
\[
a(u,\bar{u})=\aimininner{\mathcal T_2 u}{\mathcal T_2 \bar{u}} + \omega\aimininner{u}{\mathcal T_2 \bar{u}}.
\]

\subsubsection{Semigroup}
We are now ready to prove the main theorem in this subsection.
\begin{thm}\label{thm2.2.2}
The operator $-\mathcal T_2$ is the infinitesimal generator of a contraction semigroup on $H^2=L^2(\Omega)^2$.
\end{thm}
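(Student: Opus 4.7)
The plan is to apply the Lumer--Phillips theorem (the Hilbert space criterion recorded in Appendix~\ref{sec-semigroup}) to $A := -\mathcal T_2$ on $H^2 = L^2(\Omega)^2$. Three items must be verified: (i) $\mathcal D(\mathcal T_2)$ is dense in $H^2$; (ii) $A$ is dissipative; (iii) the range of $\omega I - A = \omega I + \mathcal T_2$ is all of $H^2$ for some $\omega > 0$. Combined, these force $A$ to be closed and to generate a $C_0$-semigroup of contractions.

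For (i), Proposition~\ref{prop2.2.1} identifies $\mathcal D(\mathcal T_2)$ with $V$. Since $\mathcal D(\Omega)^2 \subset V$ (elements of $\mathcal D(\Omega)^2$ vanish in a neighborhood of $\partial\Omega$ and therefore trivially satisfy \eqref{eq2.2.2}) and $\mathcal D(\Omega)^2$ is dense in $L^2(\Omega)^2$, density follows at once. For (ii), Proposition~\ref{prop2.2.2} gives $\aimininner{\mathcal T_2 u}{u} \geq 0$ for every $u \in V$, so $\mathrm{Re}\,\aimininner{Au}{u} = -\aimininner{\mathcal T_2 u}{u} \leq 0$. For (iii), Theorem~\ref{thm2.2.1} (applied with any fixed $\omega > 0$) states that for every $f \in L^2(\Omega)^2$ there exists a unique $u \in V$ solving $\mathcal T_2 u + \omega u = f$; that is, $\omega I + \mathcal T_2 : \mathcal D(\mathcal T_2) \to H^2$ is onto.

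With (i)--(iii) in hand, Lumer--Phillips yields the conclusion. There is no genuine obstacle to overcome at this stage: the substantive analytic work has already been absorbed into Proposition~\ref{prop2.2.2} (positivity, via the integration-by-parts formula of Appendix~\ref{sec-integration}) and into Theorem~\ref{thm2.2.1} (surjectivity, via the Lax--Milgram argument on the bilinear form $a(u,\bar u)=\aimininner{\mathcal T_2 u}{\mathcal T_2 \bar u} + \omega\aimininner{u}{\mathcal T_2 \bar u}$ together with the elliptic regularity results of Appendix~\ref{sec-elliptic}). The present theorem is therefore a straightforward assembly, contrasting with the proof of Theorem~\ref{thm2.1.1} only in that here one uses positivity of $\mathcal T_2$ plus the range condition, rather than positivity of both $\mathcal T_2$ and its adjoint.
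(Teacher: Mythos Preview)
Your proof is correct and follows essentially the same route as the paper: density of $V$ in $H^2$, positivity of $\mathcal T_2$ (Proposition~\ref{prop2.2.2}), and surjectivity of $\omega+\mathcal T_2$ (Theorem~\ref{thm2.2.1}), fed into the Hille--Yosida/Lumer--Phillips criterion (Theorem~\ref{S-thm2.1}). The only cosmetic difference is that the paper verifies closedness of $\mathcal T_2$ directly from the continuity $V\to H^2$, whereas you deduce it from the range condition; both are fine.
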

\begin{proof}
Since $\mathcal T_2$ is continuous from $V$ to $H^2$ and $V$ is dense in $H^2$, it is clear that $\mathcal T_2$ is a closed, densely defined operator on $H^2$. Combining Proposition \ref{prop2.2.2} and Theorem \ref{thm2.2.1} and applying Theorem \ref{S-thm2.1} [Hille-Yosida theorem] to $\mathcal T_2$, we obtain the result.
\end{proof}

\begin{rmk}\label{rmk2.2.2}
Looking back carefully at the proof of Theorem~\ref{thm2.2.1}, we see that Theorem~\ref{thm2.2.1} is still valid if the function space $V$ (i.e. the domain $\mathcal D(\mathcal T_2)$) is properly changed when we are in any of the other cases presented in Remarks~\ref{rmk2.2.1}-\ref{rmk2.2.1-2}.
\end{rmk}

\begin{rmk}\label{rmk2.2.3}
We could also extend the results in Theorem~\ref{thm2.2.1} to  curvilinear polygonal domains, whose boundaries are made of piecewise $\mathcal C^1$ curves. Indeed, in the proof of Theorem~\ref{thma.1} which is essential for Theorem~\ref{thm2.2.1}, the results that we use from Grisvard's book \cite{Gri85} are still valid for curvilinear polygonal domains. Arguing as for Theorem~\ref{thm2.2.1}, we could also obtain a semigroup structure for the operator $\mathcal T_2$ in the curvilinear polygonal domains case as long as the boundary conditions we assign would make the operator to be positive.
Also we observe again that, thanks to the regularity results for elliptic problems, we do not need here to prove density results as we do for the hyperbolic mode.
\end{rmk}

With Theorem~\ref{thm2.2.1} (and Remark \ref{rmk2.2.2}) at hand, we are able to solve the initial and  boundary value problem \eqref{eq2.2.1}-\eqref{eq2.2.2} either weakly or classically under suitable assumptions on $u_0$ and $f$. As before in Subsection \ref{subsec2.1}, we do not state these results in the present subsection and refer the interested reader to the general result in Subsection \ref{subsec2.4} (Theorem \ref{thm2.4.2}).

\subsection{The full system}\label{subsec2.4}
With the diagonalization Theorem \ref{thm2.3.1} presented in Appendix~\ref{sec-diagonal}, we are able to decompose the full system \eqref{eq2.0.1} into  simple equations which are either hyperbolic modes or elliptic modes according to the terminology above. The full system reads
\begin{equation}\begin{cases}\label{eq2.4.1}
u_t + A_1 u_x + A_2 u_y = f,\\
u(0)=u_0,\\
u\text{ satisfies \emph{suitable boundary conditions}},
\end{cases}\end{equation}
where $u=(u_1,\cdots,u_n)^t$, $f=(f_1,\cdots,f_n)^t$, and $A_1,A_2$ are real non-singular symmetric $n\times n$ matrices, and the \emph{suitable boundary conditions} will be explained below. We suppose that $A_1, A_2$ satisfy the assumptions in Theorem \ref{thm2.3.1}. 
By Theorem \ref{thm2.3.1}, we know that there exists a non-singular matrix $P$ which can diagonalize $A_1$ and $A_2$ simultaneously, i.e.
\begin{equation}\label{eq2.4.2}
  \begin{split}
P^tA_1P=\bar A_1=\text{diag}(C_1,\cdots,C_m),\\
P^tA_2P=\bar A_2=\text{diag}(D_1,\cdots,D_m),\\  
  \end{split}
\end{equation}
for some integer $m$ satisfying $1\leq m\leq n$, where the pair $(C_i,D_i)$ ($i=1,\cdots,m$) is either of \emph{Type I} or of \emph{Standard Type II} (see the definitions in Appendix \ref{sec-diagonal}).
We now define the unbounded operator $\bar A$ on $H^n=L^2(\Omega)^n$ with $\bar A\bar u=\bar A_1\bar u_x+\bar A_2\bar u_y$, $\forall\,\bar u\in \mathcal D(\bar A)$, and
\begin{equation}\label{eq2.4.3}
\mathcal D(\bar A)=\aiminset{\bar u\in H^n\,:\,\bar A\bar u\in H^n,\, \bar u\text{ satisfies \emph{suitable boundary conditions}}}.
\end{equation}
Let us explain how we choose the \emph{suitable boundary conditions} for $\bar u$ in the domain $\mathcal D(\bar A)$. If the pair $(C_1,D_1)$ is of \emph{Type I},
then we choose the boundary condition for $\bar u_1$ according to \eqref{eq2.1.3} and Remark \ref{rmk2.1.1} depending on the signs of $C_1$ and $D_1$; while if the pair $(C_1,D_1)$ is of \emph{Type II}, i.e. the pair $(C_1,D_1)$ is of form 
\begin{equation}
\bigg(  \begin{pmatrix}
\alpha_1 & \beta_1\\
\beta_1 & -\alpha_1
\end{pmatrix},
  \begin{pmatrix}
\alpha_2 & \beta_2\\
\beta_2 & -\alpha_2
\end{pmatrix}
\bigg), \quad \text{ with }\alpha_2\beta_1-\alpha_1\beta_2> 0,
\end{equation}
we then choose the boundary condition for $\bar u_1$ and $\bar u_2$ according to \eqref{eq2.2.2} and Remark \ref{rmk2.2.1} depending on the signs of $\alpha_1$ and $\alpha_2$. Similarly, we choose the boundary conditions for all the other components of $\bar u$ according to the type of the pair $(C_i,D_i)$ ($i=1,\cdots,m$).

Therefore, combining Theorem \ref{thm2.1.1}, Remark \ref{rmk2.1.2}, Theorem \ref{thm2.2.2} and Remark \ref{rmk2.2.2}, we obtain the following result.
\begin{lemma}\label{lem2.4.1}
The operator $-\bar A$ is the infinitesimal generator of a contraction semigroup on $H^n=L^2(\Omega)^n$.
\end{lemma}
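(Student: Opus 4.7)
The strategy is to exploit the block-diagonal structure produced by Theorem \ref{thm2.3.1} to reduce the full operator $\bar A$ to a direct sum of independent blocks of the two kinds already handled in Subsections \ref{subsec2.1} and \ref{subsec2.2}, and then to invoke the fact that a direct sum of contraction semigroups is itself a contraction semigroup.

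First, I would write the Hilbert space as an orthogonal direct sum
\begin{equation*}
H^n = L^2(\Omega)^n = \bigoplus_{i=1}^m H^{n_i},
\end{equation*}
where $n_i\in\{1,2\}$ is the size of the block $(C_i,D_i)$ and $\sum_i n_i = n$. Because $\bar A_1 = \mathrm{diag}(C_1,\dots,C_m)$ and $\bar A_2 = \mathrm{diag}(D_1,\dots,D_m)$ are block-diagonal and the boundary conditions defining $\mathcal D(\bar A)$ are prescribed block by block, the operator $\bar A$ itself is block-diagonal: $\bar A = \bigoplus_{i=1}^m \bar A_i$, with $\mathcal D(\bar A) = \prod_{i=1}^m \mathcal D(\bar A_i)$, and the action on the $i$-th block is precisely $\bar A_i \bar u^{(i)} = C_i \bar u^{(i)}_x + D_i \bar u^{(i)}_y$.

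Second, I would identify each $\bar A_i$ with an operator previously analyzed. If $(C_i,D_i)$ is of Type I, then $C_i$ and $D_i$ are nonzero real scalars and $\bar A_i$ coincides with the scalar operator $\mathcal T_1$ of Subsection \ref{subsec2.1}, with the homogeneous boundary condition chosen from \eqref{eq2.1.3} or one of the variants in Remark \ref{rmk2.1.1} according to the signs of $C_i,D_i$; by Theorem \ref{thm2.1.1} together with Remark \ref{rmk2.1.2}, $-\bar A_i$ generates a contraction semigroup on $L^2(\Omega)$. If $(C_i,D_i)$ is of Standard Type II, then $\bar A_i$ coincides with the operator $\mathcal T_2$ of Subsection \ref{subsec2.2}, with the boundary conditions chosen from \eqref{eq2.2.2} or one of the variants in Remark \ref{rmk2.2.1} according to the signs of $\alpha_1^{(i)},\alpha_2^{(i)}$; by Theorem \ref{thm2.2.2} together with Remark \ref{rmk2.2.2}, $-\bar A_i$ generates a contraction semigroup on $L^2(\Omega)^2$.

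Finally, I would assemble these blockwise semigroups. Denoting by $(S_i(t))_{t\geq 0}$ the contraction semigroup generated by $-\bar A_i$ on $H^{n_i}$, the family
\begin{equation*}
S(t) := \bigoplus_{i=1}^m S_i(t), \qquad t\geq 0,
\end{equation*}
is a $C_0$-semigroup of contractions on $H^n$, since $\|S(t)u\|_{H^n}^2 = \sum_i \|S_i(t)u^{(i)}\|_{H^{n_i}}^2 \leq \sum_i \|u^{(i)}\|_{H^{n_i}}^2 = \|u\|_{H^n}^2$, strong continuity follows from the strong continuity of each $S_i(\cdot)$, and the semigroup law holds componentwise. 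A direct computation of the infinitesimal generator $B$ of $S(t)$ shows that $u\in \mathcal D(B)$ with $Bu = -v$ if and only if $t^{-1}(S_i(t)u^{(i)} - u^{(i)}) \to -v^{(i)}$ in $H^{n_i}$ for each $i$, i.e. $u^{(i)}\in \mathcal D(\bar A_i)$ and $v^{(i)} = \bar A_i u^{(i)}$; thus $\mathcal D(B) = \prod_i \mathcal D(\bar A_i) = \mathcal D(\bar A)$ and $B = -\bar A$, which is the desired conclusion.

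The only subtle point is the identification of the generator of the direct-sum semigroup with $-\bar A$, which amounts to checking that strong convergence on the product space is equivalent to strong convergence in each factor; this is routine and could alternatively be done by verifying the hypotheses of the Hille--Yosida theorem (Theorem \ref{S-thm2.1}) directly for $\bar A$ using positivity and surjectivity of $\bar A + \omega I$ blockwise, but the direct-sum argument seems cleanest.
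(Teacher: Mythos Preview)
Your proposal is correct and follows the same approach as the paper. The paper's own ``proof'' is the single sentence preceding the lemma, namely that the result follows by combining Theorem~\ref{thm2.1.1}, Remark~\ref{rmk2.1.2}, Theorem~\ref{thm2.2.2} and Remark~\ref{rmk2.2.2}; you have simply spelled out the direct-sum mechanism by which this combination works, which the paper leaves implicit.
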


We are now able to define the unbounded operator $A$ on $H^n$ with $Au=P^{-t}\bar A P^{-1}u$, $\forall u\in\mathcal D(A)$, and
\[
\mathcal D(A)=\aiminset{u\in H^n=L^2(\Omega)^n\,:\,u=P\bar u,\,\bar u\in\mathcal D(\bar A)}.
\]
By virtue of Theorem \ref{T-thm2.5} and Lemma \ref{lem2.4.1}, we obtain our main result.
\begin{thm}\label{thm2.4.1}
The operator $-A$ is the infinitesimal generator of a contraction semigroup on $H^n=L^2(\Omega)^n$.
\end{thm}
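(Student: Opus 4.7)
The plan is to apply Theorem~\ref{T-thm2.5} from the semigroup appendix to the operator $\bar A$, whose contraction-semigroup property is furnished by Lemma~\ref{lem2.4.1}. First I would record the structural identity: the congruence relations $P^tA_iP=\bar A_i$ give $A_i=P^{-t}\bar A_iP^{-1}$, so $Au = P^{-t}\bar A(P^{-1}u)$ and $\mathcal D(A)=P\,\mathcal D(\bar A)$. Since $P$ and $P^{-t}$ act as bounded bijections on $H^n=L^2(\Omega)^n$, the closedness of $A$ and the density of $\mathcal D(A)$ in $H^n$ are immediate consequences of the corresponding properties of $\bar A$.

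The accretivity of $A$ in the standard $L^2$ inner product is an algebraic identity. Using the symmetry of $A_1,A_2$ and the substitution $\bar u=P^{-1}u$,
\[
\aimininner{Au}{u}_{L^2}=\int_\Omega\!\bigl(u_x^{\,t}A_1u+u_y^{\,t}A_2u\bigr)=\int_\Omega\!\bigl(\bar u_x^{\,t}\bar A_1\bar u+\bar u_y^{\,t}\bar A_2\bar u\bigr)=\aimininner{\bar A\bar u}{\bar u}_{L^2}\ge 0,
\]
the last inequality being the positivity of $\bar A$ that underlies Lemma~\ref{lem2.4.1}. The remaining ingredient is the range condition: surjectivity of $\omega I + A$ on $H^n$ for some $\omega > 0$.

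Writing $u=P\bar u$ and multiplying by $P^t$, the equation $(\omega I+A)u=f$ transforms into $(\bar A+\omega M)\bar u=P^tf$, where $M:=P^tP$ is a constant symmetric positive-definite matrix. This is the main obstacle: because the relation between $A$ and $\bar A$ is a congruence rather than a similarity, the resolvent of $\bar A$ alone does not automatically deliver that of $\bar A+\omega M$, and $M$ does not in general commute with $\bar A$. I would handle it by a bounded Neumann perturbation: write
\[
\bar A+\omega M=(\bar A+\omega' I)\bigl[I+(\bar A+\omega' I)^{-1}(\omega M-\omega' I)\bigr]
\]
and choose $\omega'>0$ so that $\aiminnorm{\omega M-\omega' I}_{\mathrm{op}}<\omega'$ (for example $\omega'=\omega(\mu_{\min}+\mu_{\max})/2$ with $\mu_{\min},\mu_{\max}$ the extremal eigenvalues of $M$); combined with the Hille--Yosida bound $\aiminnorm{(\bar A+\omega' I)^{-1}}\le 1/\omega'$ coming from Lemma~\ref{lem2.4.1}, this makes the bracketed factor boundedly invertible by a Neumann series, so $\bar A+\omega M$ is onto. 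Undoing the change of variables, $\omega I+A$ is surjective on $H^n$. With closedness, density, accretivity, and the range condition in hand, the Lumer--Phillips criterion (packaged as Theorem~\ref{T-thm2.5}) delivers the conclusion that $-A$ generates a contraction semigroup on $H^n=L^2(\Omega)^n$.
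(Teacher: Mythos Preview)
Your argument is correct, but it is not the paper's argument, and there is a mislabeling at the end. The paper's proof is a one-line citation of Theorem~\ref{T-thm2.5} applied with the matrix $P^{-1}$: the operator $A=P^{-t}\bar A P^{-1}$ with $\mathcal D(A)=\{u:P^{-1}u\in\mathcal D(\bar A)\}$ is exactly the operator ``$B$'' of that theorem built from $\bar A$, so Lemma~\ref{lem2.4.1} feeds directly into Theorem~\ref{T-thm2.5}. Internally, Theorem~\ref{T-thm2.5} is \emph{not} Lumer--Phillips; its proof uses the algebraic identity $\aimininner{Au}{u}=\aimininner{\bar A\bar u}{\bar u}$ together with the companion identity for the adjoint, $\aimininner{A^*v}{v}=\aimininner{\bar A^*\bar v}{\bar v}$, to transfer positivity of both $\bar A$ and $\bar A^*$ (available via Proposition~\ref{S-propa.3}) to $A$ and $A^*$, and then invokes the Hille--Phillips--Yosida Theorem~\ref{S-thm2.2}. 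No range condition is ever checked directly.

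Your route instead verifies the Hille--Yosida hypotheses for $A$ by hand: positivity via the same algebraic identity, and surjectivity of $\omega I+A$ via the Neumann-series perturbation $(\bar A+\omega'I)^{-1}(\omega M-\omega'I)$ with $M=P^tP$. This works (your choice of $\omega'$ indeed gives $\aiminnorm{\omega M-\omega'I}=\omega(\mu_{\max}-\mu_{\min})/2<\omega'$, and the solution $\bar u$ lands in $\mathcal D(\bar A)$ because it equals $(\bar A+\omega'I)^{-1}g-(\bar A+\omega'I)^{-1}(\omega M-\omega'I)\bar u$). It is a genuinely different and somewhat longer path; what the paper's approach buys is that the adjoint identity is just as cheap as the positivity identity, so one never has to wrestle with the fact that congruence does not preserve resolvents. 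In your write-up, the closing reference should be to Theorem~\ref{S-thm2.1} (Hille--Yosida), not to Theorem~\ref{T-thm2.5}.
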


Direct computations show that $Au=A_1u_x + A_2u_y$, and we thus obtain that the initial and boundary value problem \eqref{eq2.4.1} is equivalent to the abstract initial value problem
\begin{equation}\begin{cases}\label{eq2.4.4}
\frac{\text{d}u}{\text{d}t} + Au = f,\\
u(0)=u_0.
\end{cases}\end{equation}
The \emph{suitable boundary conditions} are already taken into account in the domain of $\mathcal D(A)$. Thanks to Theorem \ref{thm2.4.1} this problem is now solved by the Hille-Yoshida theorem and we have the following:
\begin{thm}\label{thm2.4.2}
Assume that $A_1,A_2$ are two non-singular real symmetric matrices, and that $A_1^{-1}A_2$ is diagonalizable over $\mathbb C$. 
Let $H=L^2(\Omega)$, and $(A,\mathcal{D}(A))$ be defined as before. Then the initial value problem \eqref{eq2.4.4} is well-posed. That is, 
\begin{enumerate}[i)]

\item for every $u_0\in H^n$, and $f\in L^1(0,T;H^n)$, the problem \eqref{eq2.4.4} admits a unique weak solution $u\in \mathcal{C}([0,T];H^n)$ satisfying
\begin{equation*}
u(t)=S(t)u_0 + \int_0^t S(t-s)f(s)ds,\,\forall t\in[0,T],
\end{equation*}
where $(S(t))_{t\geq 0}$ is the contraction semigroup generated by the operator $-A$;

\item for every $u_0\in\mathcal{D}(A)$, and $f\in L^1(0,T;H^n)$, with $f'=\text{d}f/\text{d}t\in L^1(0,T;H^n)$, the problem \eqref{eq2.4.4} has a unique strong solution $u$ such that
\begin{equation*}
u\in \mathcal{C}\big([0,T];\mathcal{D}(A)\big),\quad \frac{\text{d}u}{\text{d}t}\in \mathcal{C}\big([0,T];H^n\big).
\end{equation*}
\end{enumerate}

\end{thm}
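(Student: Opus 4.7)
The plan is to reduce the statement to the abstract semigroup machinery collected in Appendix \ref{sec-semigroup}, using Theorem \ref{thm2.4.1} as the crucial input. Since $-A$ is already shown to generate a contraction semigroup $(S(t))_{t\ge 0}$ on $H^n$, both conclusions should follow from standard results on inhomogeneous Cauchy problems $\tfrac{du}{dt}+Au=f$, applied to the data specified in i) and ii).

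First I would verify the equivalence of \eqref{eq2.4.1} and \eqref{eq2.4.4}. Given $u\in\mathcal D(A)$, we have $u=P\bar u$ with $\bar u\in\mathcal D(\bar A)$, and by the diagonalization \eqref{eq2.4.2},
\begin{equation*}
Au=P^{-t}\bar A\bar u=P^{-t}(\bar A_1\bar u_x+\bar A_2\bar u_y)=P^{-t}(P^tA_1P\bar u_x+P^tA_2P\bar u_y)=A_1u_x+A_2u_y,
\end{equation*}
and the boundary conditions encoded in $\mathcal D(\bar A)$ correspond, through the change of variable $u=P\bar u$, to the \emph{suitable boundary conditions} attached to $\mathcal D(A)$. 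This identification shows that the PDE problem \eqref{eq2.4.1} and the abstract Cauchy problem \eqref{eq2.4.4} are the same.

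For part i), given $u_0\in H^n$ and $f\in L^1(0,T;H^n)$, the function
\begin{equation*}
u(t)=S(t)u_0+\int_0^t S(t-s)f(s)\,ds
\end{equation*}
is well defined and lies in $\mathcal C([0,T];H^n)$ because $S(\cdot)$ is strongly continuous and contractive, which makes the integrand Bochner measurable and dominated in norm by $\|f(s)\|$. This is the mild (weak) solution by definition, and uniqueness is the classical semigroup uniqueness: if $v$ is another mild solution, then $v-u$ satisfies the homogeneous Cauchy problem with zero data and so vanishes by contractivity. I would simply invoke the relevant result in Appendix \ref{sec-semigroup}.

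For part ii), given $u_0\in\mathcal D(A)$, $f\in L^1(0,T;H^n)$ with $f'\in L^1(0,T;H^n)$, the same mild-solution formula above gives, by the classical regularity theorem for semigroup Cauchy problems, that $u\in \mathcal C([0,T];\mathcal D(A))$ and $\frac{du}{dt}\in\mathcal C([0,T];H^n)$, with the equation satisfied pointwise in $t$. This is again an appeal to the appropriate theorem in Appendix \ref{sec-semigroup}; the hypothesis $f'\in L^1(0,T;H^n)$ is exactly the standard regularity condition on the forcing that guarantees a strong solution. No new obstacle arises: all the hard work has been done to construct $A$ and to show it generates a contraction semigroup, and the two statements in Theorem \ref{thm2.4.2} are the standard translations of that fact into existence, uniqueness, and regularity for \eqref{eq2.4.4}.
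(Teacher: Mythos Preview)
Your proposal is correct and follows exactly the route the paper takes: the paper does not give a separate proof of Theorem~\ref{thm2.4.2} at all, but simply states that ``thanks to Theorem~\ref{thm2.4.1} this problem is now solved by the Hille--Yoshida theorem,'' after having verified that $Au=A_1u_x+A_2u_y$. Your write-up is just a slightly more explicit unpacking of that sentence, invoking the standard mild/strong solution theory for $\tfrac{du}{dt}+Au=f$ once $-A$ is known to generate a contraction semigroup.
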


\section{The variable coefficients case}\label{sec3}
In this section, we will show how our results in Section \ref{sec2} can be extended to the variable coefficients case. That is we want to study the IBVP \eqref{eq2.0.1} with variable coefficients. As in Section \ref{sec2}, we first study two fundamental hyperbolic and elliptic problems and then generalize to the full system by diagonalization.
We should bear in mind that all the functions in the section depend on the space variables $(x,y)$.
\subsection{The scalar (hyperbolic) case}\label{subsec3.1}
In this subsection, we consider the scalar equation ($n=1$ in \eqref{eq2.0.1}) with variable coefficients:
\begin{equation}\begin{cases}\label{eq3.1.1}
  u_t + a_1(x,y)u_x + a_2(x,y)u_y = f,\\
  u(0,x,y)=u_0(x,y),
\end{cases}\end{equation}
where $(x,y)\in\Omega$, $t\in(0,T)$, and $a_1(x,y),a_2(x,y)$ are either positive or negative away from zero everywhere on $\Omega$. Similarly as in Subsection \ref{subsec2.1}, we only consider the case when both $a_1$ and $a_2$ are positive away from zero. Hence, we assume that $a_1,a_2$ are $\mathcal C^1(\overline\Omega)$-functions that satisfy
\begin{equation}\label{asp3.1.1}
  a_1(x,y),\,a_2(x,y)\geq c_0,
\end{equation}
for some constant $c_0>0$.
We then associate to \eqref{eq3.1.1} the following boundary conditions
\begin{equation}\begin{split}\label{eq3.1.3}
  u=0,\quad\text{ on }\Gamma_W\cup\Gamma_S=\aiminset{x=0}\cup\aiminset{y=0}.\\
\end{split}\end{equation}
As in Subsection \ref{subsec2.1}, we define the unbounded operator $\mathcal T_1$ on $H=L^2(\Omega)$ with
$\mathcal T_1 u=a_1u_x + a_2u_y$, $\forall\,u\in\mathcal D(\mathcal T_1)$, and 
\[
\mathcal D(\mathcal T_1)=\aiminset{u\in H=L^2(\Omega)\,:\,\mathcal T_1u=a_1u_x + a_2u_y \in H,\text{ and }u\text{ satisfies }\eqref{eq3.1.3}}.
\]
If $u$ and $\mathcal T_1u$ both belong to $L^2(\Omega)$, then the traces of $u$ at the boundary are well-defined by using the same arguments as in Proposition \ref{M-propb.2} and the assumption \eqref{asp3.1.1}. Hence, the domain $\mathcal D(\mathcal T_1)$ is well-defined.

Applying Theorem \ref{T-thm3.1} with $\lambda(x,y) = a_1(x,y)/a_2(x,y)$, we obtain the following result: 
\begin{lemma}\label{lem3.1.1}
$\mathcal V(\Omega)\cap\mathcal D(\mathcal T_1)$ is dense in $\mathcal D(\mathcal T_1)$.
\end{lemma}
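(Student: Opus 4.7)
The plan is to invoke the variable-coefficient density theorem (Theorem~\ref{T-thm3.1}) from Appendix~\ref{sec-density}, which is the direct analogue for variable coefficients of Theorem~\ref{L-thm1} used to establish Lemma~\ref{lem2.1.1} in the constant-coefficient setting. Setting $\lambda(x,y):=a_1(x,y)/a_2(x,y)$, I first verify that this ratio is admissible: by assumption \eqref{asp3.1.1} we have $a_2\ge c_0>0$ on $\overline\Omega$ and both $a_1,a_2\in\mathcal C^1(\overline\Omega)$, so $\lambda\in\mathcal C^1(\overline\Omega)$ and is bounded above and below by strictly positive constants. Geometrically, $\lambda$ is the reciprocal slope of the characteristic curves of the vector field $(a_1,a_2)$, so characteristics enter $\Omega$ through $\Gamma_W\cup\Gamma_S$ and exit through $\Gamma_E\cup\Gamma_N$, matching the placement of the boundary condition \eqref{eq3.1.3} and putting us exactly in the hypotheses of Theorem~\ref{T-thm3.1}.

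Given $u\in\mathcal D(\mathcal T_1)$ (so that $u,\mathcal T_1u\in L^2(\Omega)$ and the traces of $u$ vanish on $\Gamma_W\cup\Gamma_S$ by the variable-coefficient version of Proposition~\ref{M-propb.2}), the goal is to produce $u_\varepsilon\in\mathcal V(\Omega)\cap\mathcal D(\mathcal T_1)$ with $u_\varepsilon\to u$ and $\mathcal T_1u_\varepsilon\to\mathcal T_1u$ in $L^2(\Omega)$. The theorem delivers this sequence in two stages: first a characteristic shift, namely composition with the flow $\phi_{-\varepsilon}$ of the vector field $(a_1,a_2)$, which pushes the support strictly away from the inflow boundary $\Gamma_W\cup\Gamma_S$; and then a standard mollification, which is harmless because the shifted function already vanishes in a neighborhood of the inflow part of $\partial\Omega$. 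The $\mathcal C^1$-regularity of $a_1,a_2$ together with the uniform lower bound \eqref{asp3.1.1} ensures that $\phi_{-\varepsilon}$ is a $\mathcal C^1$-diffeomorphism of $\overline\Omega$ into itself for $\varepsilon$ small, with Jacobian uniformly bounded above and below; this yields $L^2$-convergence of $u\circ\phi_{-\varepsilon}$ to $u$ by the usual continuity of translations (adapted to the characteristic flow).

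The step I expect to be the main obstacle, and the one that genuinely differs from Lemma~\ref{lem2.1.1}, is the convergence of the derivative $\mathcal T_1u_\varepsilon\to\mathcal T_1 u$ in $L^2(\Omega)$. Unlike the constant-coefficient situation, the directional derivative does not commute with the characteristic shift: one obtains $\mathcal T_1(u\circ\phi_{-\varepsilon})=(\mathcal T_1u)\circ\phi_{-\varepsilon}+R_\varepsilon(u)$, where the remainder $R_\varepsilon(u)$ comes from the spatial variation of $(a_1,a_2)$ along characteristics. However, since $a_1,a_2\in\mathcal C^1(\overline\Omega)$, the remainder involves only zeroth-order derivatives of $u$ multiplied by factors of size $O(\varepsilon)$ built from $\nabla a_1,\nabla a_2$, so $\|R_\varepsilon(u)\|_{L^2}\to 0$ as $\varepsilon\to 0$. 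Combined with the $L^2$-continuity of translations applied to $\mathcal T_1u\in L^2$, this yields the required graph-norm convergence, and the conclusion of the lemma follows immediately from Theorem~\ref{T-thm3.1}.
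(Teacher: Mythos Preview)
Your proof is correct and matches the paper's approach exactly: both simply invoke Theorem~\ref{T-thm3.1} with $\lambda(x,y)=a_1(x,y)/a_2(x,y)$ after checking that the hypotheses \eqref{T-asp3.1} hold (which follows from $a_1,a_2\in\mathcal C^1(\overline\Omega)$ and \eqref{asp3.1.1}).

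The extended sketch you give of how Theorem~\ref{T-thm3.1} might itself be established is not needed for Lemma~\ref{lem3.1.1} and differs from the paper's route to that theorem, which follows the constant-coefficient argument of \cite[Theorem~2.1]{HT12} supplemented by Friedrichs' Lemma~\ref{S-lem3.2extra} to control the commutator between the variable-coefficient operator and convolution, rather than a characteristic-flow shift. As a side remark on your sketch: since $\phi_t$ is the flow of the very vector field $(a_1,a_2)$ defining $\mathcal T_1$, the group law $\phi_s\circ\phi_{-\varepsilon}=\phi_{-\varepsilon}\circ\phi_s$ yields $\mathcal T_1(u\circ\phi_{-\varepsilon})=(\mathcal T_1 u)\circ\phi_{-\varepsilon}$ exactly, so the remainder $R_\varepsilon$ you anticipate actually vanishes.
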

\subsubsection{Quasi-positivity of $\mathcal T_1$ and its adjoint $\mathcal T_1^*$}
The operator $\mathcal T_1$ being quasi-positive means that for some $\omega_0>0$, $\aimininner{ \mathcal T_1u}{u}_H\geq -\omega_0 \aiminnorm{u}_{L^2}^2$ holds for all $u\in\mathcal D(\mathcal T_1)$, and we prove it as follows.
For $u$ smooth in $\mathcal{D}(\mathcal T_1)$, integrating by parts and using the boundary conditions \eqref{eq3.1.3} yield
\begin{equation}\begin{split}\label{eq3.1.4}
\aimininner{ \mathcal T_1u}{u}_H &= \int_\Omega (a_1u_x+a_2u_y)u\,\text{d}x\text{d}y \\
&=\frac12\big(\int_\Omega(-\partial_xa_1-\partial_ya_2)u^2\,\text{d}x\text{d}y+\int_0^{L_2} a_1u^2\big|_{x=L_1}\,\text{d}y + \int_0^{L_1} a_2u^2\big|_{y=L_2}\,\text{d}x\big)\\
&\geq \frac12\int_\Omega(-\partial_xa_1-\partial_ya_2)u^2\,\text{d}x\text{d}y\\
&\geq -\omega_0 \aiminnorm{u}_{L^2}^2,
\end{split}\end{equation}
where $\omega_0>0$ depends only on the norms of $a_1$ and $a_2$ in $\mathcal C^1(\overline\Omega)$.
This is also valid for all $u$ in $\mathcal{D}(\mathcal T_1)$ thanks to Lemma \ref{lem3.1.1}. 

We now turn to the definition of the formal adjoint $\mathcal T_1^*$ of $\mathcal T_1$ and its domain $\mathcal{D}(\mathcal T_1^*)$.
For $u\in\mathcal{D}(\mathcal T_1)$ and $\bar u\in H$ smooth, integrating by parts and using the boundary conditions \eqref{eq3.1.3}, we find
\begin{equation}\label{eq3.1.5}
\begin{split}
\aimininner{\mathcal T_1 u}{\bar u} &= \int_\Omega (a_1u_x+a_2u_y)\bar u\, \text{d}x\text{d}y \\
&=\int_\Omega \big( -\partial_x(a_1\bar u) - \partial_y(a_2\bar u) \big)u\,\text{d}x\text{d}y \\
&\hspace{20pt} + \int_0^{L_2} (a_1u\bar u)\big|_{x=L_1}\text{d}y - \int_0^{L_1} (a_2u\bar u)\big|_{y=L_2} \text{d}x.
\end{split}
\end{equation}
Therefore, similarly as in Subsection \ref{subsec2.1}, we can conclude from \eqref{eq3.1.5} that
\begin{equation}\begin{split}\label{eq3.1.6}
 \mathcal{T}_1^*\bar u& =-\partial_x(a_1\bar u) - \partial_y(a_2\bar u)\\
& =-a_1\bar u_x - a_2\bar u_y -(\partial_x a_1+\partial_y a_2)\bar u ,
\end{split}\end{equation}
and in order to guarantee that $u\mapsto\aimininner{\mathcal T_1 u}{\bar u}$ is continuous on $\mathcal D(\mathcal T_1)$ for the norm of $H^2$, the following boundary conditions must be satisfied:
\begin{equation}\label{eq3.1.7}
\bar u=0, \text{ on } \Gamma_E\cup\Gamma_N = \{x=L_1\}\cup\{y=L_2\};
\end{equation}
hence finally, the domain $\mathcal D(\mathcal T_1^*)$ of $\mathcal T_1^*$::
\begin{equation*}
\mathcal{D}(\mathcal T_1^*) = \aiminset{ \bar u\in H=L^2(\Omega)\,:\,\mathcal T_1^*\bar u\in H, \text{ and } \bar u\text{ satisfies } \eqref{eq3.1.7}}.
\end{equation*}
We have an equivalent characterization of the domain $\mathcal D(\mathcal T_1^*)$ since $a_1,a_2$ belong to $\mathcal C^1(\overline\Omega)$ (see also \eqref{eq3.1.6}), that is
\begin{equation*}
\mathcal{D}(\mathcal T_1^*) = \aiminset{ \bar u\in H\,:\,a_1\bar u_x + a_2\bar u_y\in H, \text{ and } \bar u\text{ satisfies } \eqref{eq3.1.7}}.
\end{equation*}

Applying Theorem \ref{T-thm3.1} and Remark \ref{T-rmk3.2}, we once again have
\begin{lemma}\label{lem3.1.2}
$\mathcal{V}^*(\Omega)\cap\mathcal{D}(\mathcal T_1^*)$ is dense in $\mathcal{D}(\mathcal T_1^*)$.
\end{lemma}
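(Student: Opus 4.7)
The plan is to apply the general density theorem from Appendix~\ref{sec-density} (Theorem~\ref{T-thm3.1}) in essentially the same way it was used to prove Lemma~\ref{lem3.1.1}, but with the roles of the two ``incoming'' corners swapped; this is precisely the content of Remark~\ref{T-rmk3.2}. So this should be a direct application rather than a new argument.

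First, I would reduce to the principal part. By \eqref{eq3.1.6} we have
\[
\mathcal T_1^*\bar u=-a_1\bar u_x-a_2\bar u_y-(\partial_x a_1+\partial_y a_2)\bar u,
\]
and because $a_1,a_2\in\mathcal C^1(\overline\Omega)$ the zeroth order term is a bounded multiplication operator on $L^2(\Omega)$. Consequently, $\mathcal T_1^*\bar u\in H$ if and only if $a_1\bar u_x+a_2\bar u_y\in H$, as already recorded in the equivalent characterization of $\mathcal D(\mathcal T_1^*)$. Thus the graph norm of $\mathcal T_1^*$ is equivalent to the graph norm of the pure first order operator $\bar u\mapsto a_1\bar u_x+a_2\bar u_y$, and for the purpose of a density statement only the principal part matters.

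Next, I would invoke Theorem~\ref{T-thm3.1} with the slope function $\lambda(x,y)=a_1(x,y)/a_2(x,y)$, which is well-defined and of class $\mathcal C^1(\overline\Omega)$ by \eqref{asp3.1.1}. In Lemma~\ref{lem3.1.1} this choice of $\lambda$ handled the boundary condition on $\Gamma_W\cup\Gamma_S$. Here the boundary condition \eqref{eq3.1.7} instead places $\bar u=0$ on the opposite corner $\Gamma_E\cup\Gamma_N$; since the characteristic direction is the same but the ``inflow'' side has been reversed, this is exactly the situation that Remark~\ref{T-rmk3.2} is designed to cover. Applying that remark produces a sequence $\bar u_n\in\mathcal V^*(\Omega)\cap\mathcal D(\mathcal T_1^*)$ converging to any given $\bar u\in\mathcal D(\mathcal T_1^*)$ in the graph norm of the first order operator, and hence, by the equivalence noted above, in the graph norm of $\mathcal T_1^*$.

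The main (and only) issue to verify is the applicability of Theorem~\ref{T-thm3.1}/Remark~\ref{T-rmk3.2} under the present hypotheses: one needs the lower bound $a_2\geq c_0>0$ to keep $\lambda$ smooth, the $\mathcal C^1$ regularity of the coefficients to absorb the zeroth order term, and the matching between the boundary conditions in \eqref{eq3.1.7p} and those encoded in the statement of the density theorem. Each of these is immediate from the setup of Subsection~\ref{subsec3.1}, so the lemma follows with no further work.
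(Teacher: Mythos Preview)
Your proposal is correct and follows the same approach as the paper, which simply states that the lemma follows by applying Theorem~\ref{T-thm3.1} together with Remark~\ref{T-rmk3.2}. Your additional observation that the zeroth order term $(\partial_x a_1+\partial_y a_2)\bar u$ is bounded and hence the graph norm of $\mathcal T_1^*$ is equivalent to that of the principal part is exactly the reason the ``equivalent characterization'' of $\mathcal D(\mathcal T_1^*)$ works, so you have merely spelled out what the paper leaves implicit.
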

We now prove that $\mathcal T_1^*$ is quasi-positive. For $\bar u\in\mathcal{D}(\mathcal T_1^*)$ smooth, integrating by parts and using the boundary conditions \eqref{eq3.1.7} yields
\begin{equation}\begin{split}
  \aimininner{ \mathcal T_1^*\bar u}{\bar u}_H &=\int_\Omega (-a_1(x,y)\bar u_x - a_2(x,y)\bar u_y -(\partial_x a_1+\partial_y a_2)\bar u)\bar u\,\text{d}x\text{d}y\\
  &=\frac12\int_\Omega (\partial_xa_1+\partial_ya_2)\bar u^2\,\text{d}x\text{d}y - \int_\Omega(\partial_x a_1+\partial_y a_2)\bar u^2\,\text{d}x\text{d}y\\
  &\hspace{20pt} +\frac12 \int_0^{L_2} (a_1u\bar u)\big|_{x=0}\text{d}y +\frac12\int_0^{L_1} (a_2u\bar u)\big|_{y=0} \text{d}x.\\
  &=-\frac12\int_\Omega(\partial_x a_1+\partial_y a_2)\bar u^2\,\text{d}x\text{d}y\\
  &\geq -\omega_0\aiminnorm{\bar u}^2_{L^2},
\end{split}\end{equation}
which is also valid for all $\bar u\in\mathcal D(\mathcal T_1^*)$ thanks to Lemma \ref{lem3.1.2}, where $\omega_0$ is the same one appearing in \eqref{eq3.1.4}.

\subsubsection{Semigroup} We are now ready to prove the main theorem in this subsection.
\begin{thm}\label{thm3.1.1}
The operator $-\mathcal T_1$ is the infinitesimal generator of a quasi-contraction semigroup on $H=L^2(\Omega)$.
\end{thm}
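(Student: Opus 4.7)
The plan is to mirror the proof of Theorem~\ref{thm2.1.1} from the constant coefficients case, but working with the shifted operator $\mathcal{S}:=\mathcal{T}_1+\omega_0 I$, where $\omega_0$ is the constant from the quasi-positivity estimates \eqref{eq3.1.4}. By the calculations already carried out in this subsection, both $\mathcal{S}$ and $\mathcal{S}^*=\mathcal{T}_1^*+\omega_0 I$ are positive operators on $H$. I would then apply Theorem~\ref{S-thm2.2} to $\mathcal{S}$ to conclude that $-\mathcal{S}$ generates a contraction semigroup $(T(t))_{t\ge0}$, which in turn implies that $-\mathcal{T}_1$ generates the semigroup $S(t)=e^{\omega_0 t}T(t)$ satisfying $\|S(t)\|\le e^{\omega_0 t}$, i.e., a quasi-contraction semigroup.

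To apply Theorem~\ref{S-thm2.2}, the remaining items to verify are that $\mathcal{T}_1$ and $\mathcal{T}_1^*$ are densely defined and closed. Density is immediate since $\mathcal{D}(\Omega)\subset \mathcal{D}(\mathcal{T}_1)\cap\mathcal{D}(\mathcal{T}_1^*)$ and $\mathcal{D}(\Omega)$ is dense in $L^2(\Omega)$. For closedness of $\mathcal{T}_1$, I would repeat the argument from Theorem~\ref{thm2.1.1}: for a sequence $\{u_n\}\subset\mathcal{D}(\mathcal{T}_1)$ with $u_n\to u$ and $\mathcal{T}_1 u_n\to\hat u$ in $L^2(\Omega)$, multiplication by the $\mathcal{C}^1(\overline\Omega)$ functions $a_1,a_2$ is continuous for distributional convergence, so $\mathcal{T}_1 u_n\to\mathcal{T}_1 u$ in $\mathcal{D}'(\Omega)$ and hence $\mathcal{T}_1 u=\hat u\in L^2(\Omega)$. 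The trace mechanism extended to variable coefficients (the analogue of Proposition~\ref{M-propb.2} alluded to after \eqref{eq3.1.3}, whose validity uses assumption~\eqref{asp3.1.1}) ensures that traces of $u_n$ converge weakly in $H^{-1}$ on $\partial\Omega$ to the traces of $u$, so $u$ satisfies \eqref{eq3.1.3}, giving $u\in\mathcal{D}(\mathcal{T}_1)$. The closedness of $\mathcal{T}_1^*$ follows by the same argument applied to the explicit expression in \eqref{eq3.1.6}, using the equivalent characterization of $\mathcal{D}(\mathcal{T}_1^*)$ stated just after \eqref{eq3.1.7}.

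The hard part that must not be glossed over is the trace argument in the closedness step: in the variable coefficient setting the operator $u\mapsto a_1 u_x+a_2 u_y$ is not of divergence form, so one needs the extension of Proposition~\ref{M-propb.2} to variable coefficients bounded away from zero to guarantee a well-defined, continuous trace map into $H^{-1}$ on each edge of $\partial\Omega$. This is precisely what assumption \eqref{asp3.1.1} (together with $a_i\in\mathcal{C}^1(\overline\Omega)$) buys us, and is flagged by the authors immediately after \eqref{eq3.1.3}. Once this trace result and the density Lemmas~\ref{lem3.1.1}, \ref{lem3.1.2} are in place, the rest is a routine bookkeeping exercise: verify the hypotheses of Theorem~\ref{S-thm2.2} for $\mathcal{S}$, extract the contraction semigroup $T(t)$, and multiply by $e^{\omega_0 t}$ to recover the quasi-contraction semigroup generated by $-\mathcal{T}_1$.
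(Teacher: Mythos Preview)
Your proposal is correct and follows essentially the same approach as the paper. The paper invokes Theorem~\ref{S-thm2.5} directly (whose proof is precisely the shift-by-$\omega_0$ reduction to Theorem~\ref{S-thm2.2} that you carry out by hand), together with the closedness and density arguments from Theorem~\ref{thm2.1.1}; your write-up simply unfolds that packaged result.
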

\begin{proof}
The proof relies on Theorem \ref{S-thm2.5} and the similar arguments as in the proof of Theorem \ref{thm2.1.1} for the closedness of the operators $\mathcal T_1$ and $\mathcal T_1^*$ and the density of their domains. We omit the details.
\end{proof}

\subsection{The simple (elliptic) system case}\label{subsec3.2}

In this subsection, we continue to consider the following simple system (the case when $n=2$ in \eqref{eq2.0.1})
\begin{equation}\begin{cases}\label{eq3.2.1}
  u_t+T_1u_x + T_2u_y = f,\\
  u(0)=u_0,
\end{cases}\end{equation}
where $u=(u_1,u_2)^t$, $f=(f_1,f_2)^t$, and 
\begin{equation}\label{eq3.2.1.1}
T_1=\begin{pmatrix}
\alpha_1 & \beta_1\\
\beta_1 & -\alpha_1
\end{pmatrix},\hspace{6pt}
T_2=\begin{pmatrix}
\alpha_2 & \beta_2\\
\beta_2 & -\alpha_2
\end{pmatrix}.
\end{equation}
Here, we assume that $\alpha_1,\alpha_2,\beta_1,\beta_2$ are in $\mathcal C^{1,\gamma}(\overline\Omega)$\footnote{We need the H\"older continuity in Theorem \ref{thma.2}} for some $0<\gamma<1$ satisfying (see the conclusion in Theorem \ref{thm2.3.1}):
 \begin{equation}\label{asp3.2.1}
 \alpha_2\beta_1-\alpha_1\beta_2 \equiv 1.
 \end{equation}
Note that under the assumption \eqref{asp3.2.1}, both $T_1$ and $T_2$ are non-singular.

Here, we only consider the case when $\alpha_1,\alpha_2$ are positive away from zero, and the other cases when $\alpha_1$ or $\alpha_2$ are negative from zero would be similar. We thus assume that
\begin{equation}\label{asp3.2.2}
\alpha_1,\,\alpha_2 \geq c_0,
\end{equation}
for some constant $c_0>0$, and choose the following homogeneous boundary conditions
\begin{equation}\label{eq3.2.2}
\begin{cases}
 u_1 = 0 \text{ on } \Gamma=\Gamma_W\cup\Gamma_S=\aiminset{x=0}\cup\aiminset{y=0}, \\
 u_2 = 0 \text{ on } \Gamma^c=\Gamma_E\cup\Gamma_N=\aiminset{x=L_1}\cup\aiminset{y=L_2}.
 \end{cases}
\end{equation}
Here $\Gamma^c$ is the complement of $\Gamma$ with respect to the boundary $\partial\Omega$. 

We now define the unbounded operator $\mathcal T_2$ on $H^2:=L^2(\Omega)^2$ by setting
\begin{equation}\label{eq3.2.3}
\mathcal T_2 u=T_1 u_x+T_2 u_y,\quad\forall\,u\in \mathcal D(\mathcal T_2),
\end{equation}
with
\[
 \mathcal D(\mathcal T_2)=\aiminset{  u=(u_1,u_2)^t\in H^2=L^2(\Omega)^2\,:\, \mathcal T_2 u\in H^2, u\text{ satisfies \eqref{eq3.2.2} } },
\]
and recall the function space $V$ defined in Subsection \ref{subsec2.2}:
\begin{equation*}
V=\{ u=(u_1,u_2)^t\in H^1(\Omega)^2 \ |\ u\text{ satisfies \eqref{eq3.2.2} } \}.
\end{equation*}

\subsubsection{Properties of $\mathcal T_2$}
We proceed similarly as in Subsection \ref{subsec2.2}.
\begin{prop}\label{prop3.2.1}
The domain $\mathcal D(\mathcal T_2)$ of $\mathcal T_2$ is the space $V$.
\end{prop}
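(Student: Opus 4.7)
The proof I would write follows closely the pattern of Proposition \ref{prop2.2.1} (whose constant-coefficient proof in \cite{HT12} used an explicit linear change of coordinates), with that explicit reduction replaced by the quasi-conformal / Beltrami construction encoded in Theorem \ref{thma.2} of Appendix \ref{sec-elliptic}.

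The easy inclusion $V\subset\mathcal{D}(\mathcal T_2)$ is immediate. If $u=(u_1,u_2)^t\in V$, then $u_x,u_y\in L^2(\Omega)^2$, and since $\alpha_i,\beta_i\in\mathcal C^{1,\gamma}(\overline\Omega)\subset L^\infty(\Omega)$ we get $\mathcal T_2u=T_1u_x+T_2u_y\in L^2(\Omega)^2$; the boundary conditions \eqref{eq3.2.2} hold by the very definition of $V$, so $u\in\mathcal D(\mathcal T_2)$.

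The reverse inclusion $\mathcal D(\mathcal T_2)\subset V$ is the substantive part. Given $u\in\mathcal D(\mathcal T_2)$, set $f:=\mathcal T_2 u\in L^2(\Omega)^2$. The principal symbol satisfies
\[
\det\bigl(T_1\xi_1+T_2\xi_2\bigr)=-\bigl(\alpha_1\xi_1+\alpha_2\xi_2\bigr)^2-\bigl(\beta_1\xi_1+\beta_2\xi_2\bigr)^2,
\]
which, by the normalization \eqref{asp3.2.1}, vanishes only at $\xi=0$; hence $\mathcal T_2$ is a first-order elliptic system. Theorem \ref{thma.2} provides a coordinate change $(x,y)\mapsto(\xi,\eta)$ obtained by solving a Beltrami equation, under which the system $T_1v_x+T_2v_y=g$ is transformed into (a zeroth-order perturbation of) the Cauchy-Riemann system. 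Combined with the elliptic regularity theory for the latter (and the regularity results for polygonal domains from \cite{Gri85} that are used there), Theorem \ref{thma.2} yields a unique $\tilde u\in V$ with $\mathcal T_2\tilde u=f$ and the boundary conditions \eqref{eq3.2.2}.

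It remains to identify $u$ with $\tilde u$. Setting $w:=u-\tilde u$, we have $w\in L^2(\Omega)^2$, $w$ inherits the boundary conditions \eqref{eq3.2.2} (well defined as traces of elements of $\mathcal D(\mathcal T_2)$, exactly as in Subsection \ref{subsec2.1}), and $\mathcal T_2 w=0$ distributionally. The $L^2$-uniqueness clause of Theorem \ref{thma.2} then forces $w=0$, so $u=\tilde u\in V$.

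The main obstacle is precisely this last step: a priori we only know $u\in L^2$, not $u\in H^1$, and we must rule out $L^2$ solutions of the homogeneous problem $\mathcal T_2 w=0$ with the mixed boundary conditions \eqref{eq3.2.2}. In the transformed $(\xi,\eta)$ coordinates supplied by the Beltrami construction, $w$ solves a perturbed $\bar\partial$-equation in $L^2$ with vanishing traces on $\Gamma$ and $\Gamma^c$, and one concludes $w=0$ by a Weyl-type regularization together with the positivity/injectivity that will be established for $\mathcal T_2$ in the upcoming analog of Proposition \ref{prop2.2.2}. Once this $L^2$-uniqueness is in hand — via Theorem \ref{thma.2} — the identification $u=\tilde u$, and therefore $\mathcal D(\mathcal T_2)=V$, is automatic.
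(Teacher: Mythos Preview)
Your outline is in the right spirit (easy inclusion, then solve and identify), but there are two concrete gaps that prevent the argument from closing.

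First, you invoke Theorem~\ref{thma.2} to produce $\tilde u\in V$ with $\mathcal T_2\tilde u=f$ for $f=\mathcal T_2 u\in L^2(\Omega)^2$. However, Theorem~\ref{thma.2} (and Theorem~\ref{thma.1}) is stated only for right-hand sides $\Psi\in\mathcal C_c^1(\Omega)^2$, not for general $L^2$ data. Extending solvability to $L^2$ data requires precisely the a~priori estimate $\|\nabla u\|_{L^2}\le c\,\|\mathcal T_2 u\|_{L^2}$ on $V$, which you never establish.

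Second, and more seriously, the identification step fails. You write that ``the $L^2$-uniqueness clause of Theorem~\ref{thma.2}'' forces $w=0$, but Theorem~\ref{thma.2} asserts uniqueness only among $H^1$ solutions; there is no $L^2$-uniqueness clause. In your final paragraph you propose to rescue this via ``the positivity/injectivity that will be established for $\mathcal T_2$ in the upcoming analog of Proposition~\ref{prop2.2.2}''. This is circular: Proposition~\ref{prop3.2.2} is stated for $u\in\mathcal D(\mathcal T_2)=V$, hence already presupposes Proposition~\ref{prop3.2.1}. Moreover, it only gives quasi-positivity $\langle\mathcal T_2 u,u\rangle\ge -\omega_0\|u\|^2$, which does not imply injectivity of $\mathcal T_2$ on any domain.

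The paper's route avoids both issues by computing $\|\mathcal T_2 u\|_{L^2}^2$ directly via an integral identity (the computation labeled (4.9) in \cite{HT12}, of the same flavor as \eqref{eqa.22}). Expanding and integrating by parts, Lemma~\ref{lema.1} eliminates the mixed term $\int_\Omega(u_{2x}u_{1y}-u_{1x}u_{2y})$, and the normalization \eqref{asp3.2.1} $\alpha_2\beta_1-\alpha_1\beta_2\equiv 1$ is exactly what is needed to kill the extra term that appears when derivatives fall on the variable coefficients. This yields the two-sided estimate $c^{-1}\|\nabla u\|\le\|\mathcal T_2 u\|\le c\|\nabla u\|$ on $V$ directly, and combined with the solvability of Theorem~\ref{thma.1} for smooth data one concludes $\mathcal D(\mathcal T_2)=V$ without ever needing an $L^2$-uniqueness statement. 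That coercivity identity is the missing engine in your argument.
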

The proof of Proposition \ref{prop3.2.1} is exactly the same as the proof for \cite[Theorem 4.1]{HT12} (Proposition \ref{prop2.2.1}) except that we need the assumption \eqref{asp3.2.1} to dispense the last term in the integrals of (4.9) in \cite{HT12} and utilize Theorem \ref{thma.1} instead of \cite[Proposition 4.1]{HT12}. 
In the variable coefficients case, the new additional difficulties (as compared to the constant coefficients) appear in the proof of Theorem \ref{thma.1} which is essentially based on the existence of solutions for the Beltrami equations and the use of quasi-conformal mappings.

\begin{prop}\label{prop3.2.2}
The operator $\mathcal T_2$ is quasi-positive in the sense that for any $ u\in \mathcal D(\mathcal T_2)= V$:
\begin{equation}\label{eq3.2.5}
  \aimininner{\mathcal T_2  u}{ u}\geq -\omega_0\aiminnorm{ u}^2_{L^2},
\end{equation}
where $\omega_0$ is a positive constant, only depending on the norms of $\alpha_1,\alpha_2,\beta_1,\beta_2$ in $\mathcal C^1(\overline\Omega)$.
\end{prop}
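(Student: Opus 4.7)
The plan is to follow the integration-by-parts template of Proposition~\ref{prop2.2.2} (the constant-coefficient analogue), working first on $u \in V \cap \mathcal C^\infty(\overline\Omega)^2$ and then extending by density to all of $V = \mathcal D(\mathcal T_2)$; this last step is routine because $V$ is an $H^1$-closed subspace cut out by homogeneous Dirichlet conditions on disjoint portions of $\partial\Omega$, and such spaces admit smooth dense subspaces. Most of the machinery from the constant-coefficient proof carries over verbatim; the only novelty is a collection of interior terms involving the first derivatives of $\alpha_i,\beta_i$, which will ultimately be absorbed into $-\omega_0\|u\|^2_{L^2}$ using the assumed $\mathcal C^1$ regularity.

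The starting point is the pointwise identity $(T_1 u_x)\cdot u = \tfrac12\alpha_1(u_1^2-u_2^2)_x + \beta_1(u_1 u_2)_x$, together with its $y$-counterpart, which follows directly from the explicit form \eqref{eq3.2.1.1} of $T_1,T_2$. Integrating over $\Omega$ and invoking the divergence theorem, $\aimininner{\mathcal T_2 u}{u}$ splits into four contributions: boundary integrals of $\beta_i u_1 u_2$, boundary integrals of $\tfrac12\alpha_i(u_1^2-u_2^2)$, and two interior integrals $-\tfrac12\int_\Omega[(\alpha_1)_x+(\alpha_2)_y](u_1^2-u_2^2)\,dx\,dy$ and $-\int_\Omega[(\beta_1)_x+(\beta_2)_y]u_1 u_2\,dx\,dy$.

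The key observation is that the boundary conditions \eqref{eq3.2.2} are engineered precisely to kill the $\beta$-boundary terms and render the $\alpha$-boundary terms nonnegative. Indeed, on every side of $\partial\Omega$ either $u_1$ or $u_2$ vanishes, so $u_1 u_2\equiv 0$ on $\partial\Omega$. For the $\alpha$-boundary terms, after substituting the vanishing conditions the contribution reduces to $\tfrac12\int_0^{L_2}[\alpha_1 u_1^2|_{x=L_1}+\alpha_1 u_2^2|_{x=0}]\,dy + \tfrac12\int_0^{L_1}[\alpha_2 u_1^2|_{y=L_2}+\alpha_2 u_2^2|_{y=0}]\,dx$, which is manifestly nonnegative thanks to \eqref{asp3.2.2}. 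These nonnegative boundary terms may simply be dropped.

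What remains are the two interior residual integrals, whose absolute values are each bounded by $C\|u\|^2_{L^2(\Omega)^2}$, with $C$ depending only on $\|\alpha_i\|_{\mathcal C^1(\overline\Omega)}$ and $\|\beta_i\|_{\mathcal C^1(\overline\Omega)}$. Choosing $\omega_0$ to dominate the sum of these bounds yields \eqref{eq3.2.5} for smooth $u$, and density closes the proof. The main point requiring care is verifying the divergence-form identity above and correctly pairing each boundary term with the vanishing boundary datum; beyond the $\mathcal C^1$ regularity of the coefficients and the structural form \eqref{eq3.2.1.1} no deeper tool is needed, and in particular the normalization \eqref{asp3.2.1} plays no role here (it will only become relevant for Proposition~\ref{prop3.2.1}).
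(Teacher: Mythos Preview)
Your proposal is correct and follows essentially the same approach as the paper: integrate by parts, use the boundary conditions \eqref{eq3.2.2} and the sign assumption \eqref{asp3.2.2} to show the boundary contributions are nonnegative, and bound the interior terms (involving first derivatives of the coefficients) by $\omega_0\|u\|_{L^2}^2$. The only minor differences are cosmetic: the paper works directly with $u\in V\subset H^1(\Omega)^2$, for which the divergence theorem is already valid, so your preliminary restriction to smooth $u$ and subsequent density step are unnecessary; and the paper keeps the computation in compact matrix form via $u^tT_iu$ rather than expanding componentwise.
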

\begin{proof}
We prove \eqref{eq3.2.5} by direct computation. For $ u\in V$, integrations by parts yield
\begin{equation*}
\begin{split}
\aimininner{\mathcal T_2  u}{ u}&=\int_\Omega   u^t T_1 u_x +   u^t T_2 u_y \text{d}x\text{d}y\\
&= \frac12\int_\Omega - u^t T_{1x} u -   u^t T_{2y} u \,\text{d}x\text{d}y +\frac12\int_0^{L_2} u^t T_1 u\Big|_{x=0}^{x=L_1}\, \text{d}y + \frac12\int_0^{L_1} u^t T_2 u\Big|_{y=0}^{y=L_2}\, \text{d}x\\
&=(\text{using the boundary conditions \eqref{eq3.2.2} and the assumption $\alpha_1,\alpha_2>0$}) \\
&\geq  \frac12\int_\Omega - u^t T_{1x} u -   u^t T_{2y} u \,\text{d}x\text{d}y \\
&\geq -\omega_0\aiminnorm{ u}_{L^2}^2,
\end{split}
\end{equation*}
where $\omega_0>0$ only depends on the norms of $\alpha_1,\alpha_2,\beta_1,\beta_2$ in $\mathcal C^1(\overline\Omega)$.
\end{proof}

\subsubsection{The adjoint operator $\mathcal T_2^*$} In the variable coefficients case, we can not prove directly a similar result as in Theorem \ref{thm2.2.1}, and in order to use the semigroup theory, we turn to the adjoint operator $\mathcal T_2^*$ of $\mathcal T_2$. For $u\in \mathcal D(\Omega)\subset \mathcal D(\mathcal T_2)$ and $\bar u$ smooth, integrations by parts yield
\begin{equation}\begin{split}\label{eq3.2.7}
\aimininner{\mathcal T_2 u}{\bar u} = \int_\Omega \bar u^t\cdot (T_1 u_x + T_2 u_y)\,\text{d}x\text{d}y
&=\int_\Omega u^t\cdot[ -(T_1\bar u)_x - (T_2\bar u)_y]\,\text{d}x\text{d}y.
\end{split}\end{equation}
Hence, in order to guarantee that $u \mapsto \aimininner{\mathcal T_2 u}{\bar u}$ is continuous on $\mathcal D(\mathcal T_2)$ for the norm of $L^2(\Omega)^2$ (see Subsection \ref{subsec2.1}), $-(T_1\bar u)_x - (T_2\bar u)_y$ must be in $L^2(\Omega)^2$.
Using the notation $\mathcal X(\Omega)$ introduced in Appendix \ref{sec-integration} with $T_1$ and $T_2$ defined by \eqref{eq3.2.1.1}, we find that $\bar u$ belongs to $\mathcal X(\Omega)$.

Now for $u\in\mathcal D(\mathcal T_2) = V$ and $\bar u \in \mathcal X(\Omega)$, by Theorem \ref{thmg.1}, we obtain 
\begin{equation}\label{eq3.2.8}
\aimininner{\mathcal T_2 u}{\bar u} = \aimininner{T_1u_x + T_2u_y}{\bar u} = \aimininner{u}{-(T_1\bar u)_x - (T_2\bar u)_y} + \aimininner{\gamma_\nu \bar u}{\gamma_0 u},
\end{equation}
where, specifically, 
\begin{equation*}
\gamma_\nu \bar u=\begin{cases}
-T_1\bar u, \text{ on }\Gamma_W,\\
T_1\bar u,\text{ on }\Gamma_E,\\
-T_2\bar u,\text{ on }\Gamma_S,\\
T_2\bar u,\text{ on }\Gamma_N.
\end{cases}
\end{equation*}
Therefore,  in order to guarantee that $u \mapsto \aimininner{\mathcal T_2 u}{\bar u}$ is continuous on $\mathcal D(\mathcal T_2)$ for the norm of $L^2(\Omega)^2$, we must have
\begin{equation}\label{eq3.2.9}
\aimininner{\gamma_\nu \bar u}{\gamma_0 u} = 0.
\end{equation}
Since $u\in\mathcal D(\mathcal T_2)$ satisfies the boundary conditions \eqref{eq3.2.2}, we infer from \eqref{eq3.2.9} that $\bar u$ must satisfy the following boundary conditions
\begin{equation}\begin{cases}\label{eq3.2.10}
\beta_1\bar u_1 - \alpha_1\bar u_2 = 0,\text{ on }\Gamma_W=\aiminset{x=0},\\
\alpha_1\bar u_1 + \beta_1\bar u_2 = 0,\text{ on }\Gamma_E=\aiminset{x=L_1},\\
\beta_2\bar u_1 - \alpha_2\bar u_2 = 0,\text{ on }\Gamma_S=\aiminset{y=0},\\
\alpha_2\bar u_1 + \beta_2\bar u_2 = 0,\text{ on }\Gamma_N=\aiminset{y=L_2}.
\end{cases}\end{equation}

Therefore, we can conclude that the domain $\mathcal D(\mathcal T_2^*)$ of the adjoint operator $\mathcal T_2^*$ satisfies 
\[
\mathcal D(\mathcal T_2^*)\subset \aiminset{ \bar u=(\bar u_1,\bar u_2)^t\in L^2(\Omega)^2\,|\, \mathcal T_2^*\bar u \in L^2(\Omega)^2,\; \bar u\text{ satisfies }\eqref{eq3.2.10}} = :\widetilde{\mathcal D}(\mathcal T_2^*),
\]
where $\mathcal T_2^*\bar u= -(T_1\bar u)_x - (T_2\bar u)_y$. Now we have for any $u\in \mathcal D(\mathcal T_2)$ and any $\bar u\in   \widetilde{\mathcal D}(\mathcal T_2^*)$, we have by \eqref{eq3.2.8}
\[
\aimininner{\mathcal T_2 u}{\bar u} =  \aimininner{u}{ \mathcal T_2^* \bar u},
\]
which implies that $\widetilde{\mathcal D}(\mathcal T_2^*)\subset \mathcal D(\mathcal T_2^*)$. Hence
\[
\mathcal D(\mathcal T_2^*) = \aiminset{ \bar u=(\bar u_1,\bar u_2)^t\in L^2(\Omega)^2\,|\, \mathcal T_2^*\bar u \in L^2(\Omega)^2,\; \bar u\text{ satisfies }\eqref{eq3.2.10}}.
\]

We now use the duality method as in \cite{LM72} to prove the following density result.
\begin{prop}\label{prop3.2.3}
$\mathcal C^\infty(\overline\Omega)\cap \mathcal D(\mathcal T_2^*)$ is dense in $\mathcal D(\mathcal T_2^*)$.
\end{prop}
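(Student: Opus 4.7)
The plan is to use the Lions-Magenes duality method. Equip $\mathcal D(\mathcal T_2^*)$ with the graph norm, which makes it a Hilbert space, and argue by contradiction: if $\mathcal C^\infty(\overline\Omega)\cap\mathcal D(\mathcal T_2^*)$ were not dense, then Hahn-Banach would produce a nonzero continuous linear functional $\ell$ on this graph space vanishing on the smooth subspace. The Riesz representation theorem would then supply $f,g\in L^2(\Omega)^2$ with
\[
\ell(\bar u)=\aimininner{f}{\bar u}+\aimininner{g}{\mathcal T_2^*\bar u},\qquad\forall\,\bar u\in\mathcal D(\mathcal T_2^*),
\]
and the goal becomes to deduce that $\ell\equiv 0$.

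I would first test against $\bar u\in\mathcal D(\Omega)^2$, which is smooth and satisfies \eqref{eq3.2.10} vacuously; reading $\ell(\bar u)=0$ in the sense of distributions yields $T_1 g_x+T_2 g_y=-f$ in $\mathcal D'(\Omega)^2$, so $\mathcal T_2 g\in L^2(\Omega)^2$ and $g\in\mathcal X(\Omega)$. In particular the traces of $g$ on $\partial\Omega$ are well defined in the sense of Appendix \ref{sec-integration}. I would then take $\bar u$ to be an arbitrary smooth element of $\mathcal D(\mathcal T_2^*)$ and apply Theorem \ref{thmg.1} with the roles of the two arguments interchanged (now $g$ supplies the $\mathcal X(\Omega)$-argument and $\bar u\in\mathcal C^\infty(\overline\Omega)$ the regular one):
\[
\aimininner{\mathcal T_2 g}{\bar u}=\aimininner{g}{\mathcal T_2^*\bar u}+\aimininner{\gamma_\nu g}{\gamma_0\bar u}.
\]
Combined with $\mathcal T_2 g=-f$ and $\ell(\bar u)=0$, this forces $\aimininner{\gamma_\nu g}{\gamma_0\bar u}=0$ for every smooth $\bar u$ satisfying \eqref{eq3.2.10}.

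To close the argument, I would observe that such $\bar u$ can be prescribed with essentially arbitrary traces on the interior of each of the four sides, subject only to the single pointwise linear constraint imposed by \eqref{eq3.2.10} on that side together with the compatibility $\bar u=0$ at the four corners forced by the two constraints meeting there (this constraint is consistent, and in fact uniquely determined at each corner, thanks to \eqref{asp3.2.1}). A side-by-side check using the explicit form of $\gamma_\nu$ listed after \eqref{eq3.2.8} then shows that the vanishing of $\aimininner{\gamma_\nu g}{\gamma_0\bar u}$ is equivalent to $g$ satisfying the complementary boundary conditions \eqref{eq3.2.2}, i.e.\ $g\in V=\mathcal D(\mathcal T_2)$. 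Since $\mathcal T_2$ is closed, the adjoint identity then gives $\aimininner{g}{\mathcal T_2^*\bar u}=\aimininner{\mathcal T_2 g}{\bar u}$ for every $\bar u\in\mathcal D(\mathcal T_2^*)$, whence $\ell(\bar u)=\aimininner{f+\mathcal T_2 g}{\bar u}=0$ identically, contradicting $\ell\neq 0$. The main obstacle I expect is this last boundary-condition identification: one must exploit \eqref{asp3.2.1} to ensure that the space of admissible traces of smooth $\bar u$ is rich enough, and then carry out the linear algebra on each of $\Gamma_W,\Gamma_E,\Gamma_S,\Gamma_N$ to convert the single scalar identity $\aimininner{\gamma_\nu g}{\gamma_0\bar u}=0$ into the full system \eqref{eq3.2.2}.
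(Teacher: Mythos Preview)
Your proposal is correct and follows essentially the same Lions--Magenes duality argument as the paper: represent a continuous linear functional on $\mathcal D(\mathcal T_2^*)$ by a pair $(f,g)$, test against $\mathcal D(\Omega)^2$ to obtain $g\in\mathcal X(\Omega)$, test against smooth $\bar u\in\mathcal D(\mathcal T_2^*)$ and use the integration-by-parts formula of Appendix~\ref{sec-integration} to deduce that $g$ satisfies \eqref{eq3.2.2} and hence $g\in\mathcal D(\mathcal T_2)=V$, and finally apply the same formula for arbitrary $\bar u\in\mathcal D(\mathcal T_2^*)$ to conclude. The only cosmetic difference is that the paper uses the variant $\tilde\gamma_\nu$ of Remark~\ref{rmkg.1} rather than $\gamma_\nu$, and treats the boundary-condition identification you flag as the main obstacle in a single sentence.
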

\begin{proof}
Let $M\in \big(\mathcal D(\mathcal T_2^*)\big)'$, the dual space of $\mathcal D(\mathcal T_2^*)$, and assume that the restriction of $M$ on $\mathcal C^\infty(\overline\Omega)\cap \mathcal D(\mathcal T_2^*)$ is zero, i.e.
\[
M(\bar u) = 0,\quad\quad \forall\,\bar u\in \mathcal C^\infty(\overline\Omega)\cap \mathcal D(\mathcal T_2^*).
\]
We are going to show that
\begin{equation}\label{eq3.2.11}
M(\bar u) = 0,\quad\quad \forall\,\bar u\in  \mathcal D(\mathcal T_2^*),
\end{equation}
and we can conclude from \eqref{eq3.2.11} that $\mathcal C^\infty(\overline\Omega)\cap \mathcal D(\mathcal T_2^*)$ is dense in $\mathcal D(\mathcal T_2^*)$.

For $M\in \big(\mathcal D(\mathcal T_2^*)\big)'$, observing that $\mathcal D(\mathcal T_2^*)\subset L^2(\Omega)^2\times L^2(\Omega)^2$ and using the Hahn-Banach Theorem, we see that there exists $(g,h)\in L^2(\Omega)^2\times L^2(\Omega)^2$ such that
\begin{equation}\label{eq3.2.11.1}
M(\bar u) = \aimininner{h}{\bar u} + \aimininner{g}{\mathcal T_2^* \bar u} =  \aimininner{h}{\bar u} + \aimininner{g}{ -(T_1\bar u)_x - (T_2\bar u)_y }.
\end{equation}

For $\bar u\in \mathcal D(\Omega)$, we have
\begin{equation*}\begin{split}
0=M(\bar u) &= \aimininner{h}{\bar u} + \aimininner{g}{ -(T_1\bar u)_x - (T_2\bar u)_y } \\
&=(\text{in the sense of distributions since }\bar u\in \mathcal D(\Omega)) \\
&= \aimininner{h}{\bar u} + \aimininner{g_x}{T_1\bar u} + \aimininner{g_y}{T_2\bar u}\\
&=(\text{since $T_1$ and $T_2$ are symmetric}) \\
&= \aimininner{h}{\bar u} + \aimininner{T_1g_x + T_2g_y}{\bar u},
\end{split}\end{equation*}
which shows that
\begin{equation}\label{eq3.2.12}
T_1g_x + T_2g_y  = -h \in L^2(\Omega)^2.
\end{equation}
Hence, $g$ belongs to $\mathcal X(\Omega)$ defined in Appendix \ref{sec-integration}, i.e.
\[
\mathcal X(\Omega)=\{ g\in L^2(\Omega)^2\,:\, T_1g_x + T_2g_y\in L^2(\Omega)^2\}.
\]
For $\bar u \in \mathcal C^\infty(\overline\Omega)\cap \mathcal D(\mathcal T_2^*)$, using Remark \ref{rmkg.1} which justifies the integration by parts, we find that
\begin{equation*}\begin{split}
0=M(\bar u)&=\aimininner{h}{\bar u} + \aimininner{g}{ -(T_1\bar u)_x - (T_2\bar u)_y } \\
&=\aimininner{h}{\bar u} +\aimininner{T_1g_x + T_2g_y}{\bar u} -\aimininner{\tilde\gamma_\nu g}{T_1\gamma_0\bar u + T_2\gamma_0\bar u},
\end{split}\end{equation*}
which, together with \eqref{eq3.2.12}, implies that 
\begin{equation}\label{eq3.2.13}
\aimininner{\tilde\gamma_\nu g}{T_1\gamma_0\bar u + T_2\gamma_0\bar u} = 0.
\end{equation}
Since $\bar u$ satisfies the boundary conditions \eqref{eq3.2.10}, we infer from \eqref{eq3.2.13} that $g$ satisfies the boundary conditions \eqref{eq3.2.2}. Therefore, we obtain that
\[
g\in \mathcal D(\mathcal T_2) = V.
\]

For any $\bar u\in\mathcal D(\mathcal T_2^*)$, using Theorem \ref{thmg.1} again and \eqref{eq3.2.9} with $u=g$, we find that
\[
\aimininner{g}{\mathcal T_2^*\bar u} = \aimininner{ T_1g_x + T_2g_y}{\bar u}.
\]
Therefore, for any $\bar u\in\mathcal D(\mathcal T_2^*)$, we have by \eqref{eq3.2.11.1} and \eqref{eq3.2.12}:
\[
M(\bar u) = \aimininner{h}{\bar u} + \aimininner{g}{\mathcal T_2^* \bar u} = \aimininner{h}{\bar u} + \aimininner{ T_1g_x + T_2g_y}{\bar u} = \aimininner{ h+ T_1g_x + T_2g_y}{\bar u} = 0.
\]
We thus proved \eqref{eq3.2.11} and the result follows.
\end{proof}

\begin{prop}\label{prop3.2.4}
The operator $\mathcal T_2^*$ is quasi-positive in the sense that for any $\bar u\in \mathcal D(\mathcal T_2^*)$:
\begin{equation}\label{eq3.2.14}
  \aimininner{\mathcal T_2^* \bar u}{\bar u}\geq -\omega_0\aiminnorm{ \bar u}^2_{L^2},
\end{equation}
where $\omega_0$ is the same as in Proposition \ref{prop3.2.2}.
\end{prop}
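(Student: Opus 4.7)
The plan is to mirror the proof of Proposition~\ref{prop3.2.2}, but for the adjoint operator. Thanks to the density result in Proposition~\ref{prop3.2.3}, it suffices to prove \eqref{eq3.2.14} for $\bar u \in \mathcal C^\infty(\overline\Omega)\cap\mathcal D(\mathcal T_2^*)$, since both sides of \eqref{eq3.2.14} are continuous in $\bar u$ with respect to the graph norm of $\mathcal T_2^*$; the general case then follows by approximation.

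For smooth $\bar u$, I first expand $\mathcal T_2^*\bar u = -(T_1\bar u)_x - (T_2\bar u)_y$ and apply integration by parts. Using the symmetry of $T_1$ and $T_2$, I write
\begin{equation*}
\aimininner{\mathcal T_2^*\bar u}{\bar u} = \int_\Omega \bar u^t T_1\bar u_x + \bar u^t T_2\bar u_y\,\text{d}x\text{d}y - \int_{\partial\Omega}\bigl(\bar u^t T_1\bar u\,\nu_x + \bar u^t T_2\bar u\,\nu_y\bigr)\,\text{d}s,
\end{equation*}
and then use $\bar u^t T_j\bar u_x = \tfrac12\partial_x(\bar u^t T_j \bar u) - \tfrac12 \bar u^t T_{j,x}\bar u$ (and analogously in $y$) to obtain
\begin{equation*}
\aimininner{\mathcal T_2^*\bar u}{\bar u} = -\tfrac12\int_\Omega \bar u^t(T_{1,x}+T_{2,y})\bar u\,\text{d}x\text{d}y - \tfrac12\int_{\partial\Omega}\bigl(\bar u^t T_1\bar u\,\nu_x + \bar u^t T_2\bar u\,\nu_y\bigr)\,\text{d}s.
\end{equation*}
The interior term is manifestly bounded below by $-\omega_0\aiminnorm{\bar u}_{L^2}^2$ where $\omega_0$ depends only on the $\mathcal C^1(\overline\Omega)$ norms of $\alpha_1,\alpha_2,\beta_1,\beta_2$, matching the constant of Proposition~\ref{prop3.2.2}.

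The main obstacle is showing that the boundary integral has a favorable sign. On $\Gamma_W$ ($\nu=(-1,0)$) the boundary condition in \eqref{eq3.2.10} gives $\bar u_2 = (\beta_1/\alpha_1)\bar u_1$, so a direct computation yields $\bar u^t T_1\bar u = \bigl((\alpha_1^2+\beta_1^2)/\alpha_1\bigr)\bar u_1^2\ge 0$, using $\alpha_1\ge c_0>0$; hence the contribution $\tfrac12\int_{\Gamma_W}\bar u^t T_1\bar u\,\text{d}s$ is nonnegative. On $\Gamma_E$ ($\nu=(1,0)$) the condition $\alpha_1\bar u_1+\beta_1\bar u_2=0$ gives $\bar u^t T_1\bar u = -\bigl((\alpha_1^2+\beta_1^2)/\alpha_1\bigr)\bar u_2^2\le 0$, so the corresponding contribution $-\tfrac12\int_{\Gamma_E}\bar u^t T_1\bar u\,\text{d}s$ is again nonnegative. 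The analogous computation for $\Gamma_S,\Gamma_N$ using $\alpha_2\ge c_0>0$ treats the $T_2$-terms in the same way, so that the entire boundary integral contributes a nonnegative quantity and can be discarded when bounding from below. Combining the two estimates yields \eqref{eq3.2.14}.
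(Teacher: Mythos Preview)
Your proof is correct and follows essentially the same approach as the paper: reduce to smooth $\bar u$ via the density result of Proposition~\ref{prop3.2.3}, integrate by parts to isolate the interior term $-\tfrac12\int_\Omega\bar u^t(T_{1,x}+T_{2,y})\bar u$, and verify that the boundary contributions have a favorable sign thanks to the adjoint boundary conditions \eqref{eq3.2.10} and the assumption $\alpha_1,\alpha_2>0$. The only difference is that you spell out the boundary computations explicitly on each side, whereas the paper records them in one line.
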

\begin{proof}
First, for $ u\in \mathcal C^\infty(\overline\Omega)\cap \mathcal D(\mathcal T_2^*)$, integrations by parts yield
\begin{equation*}
\begin{split}
\aimininner{\mathcal T_2^* \bar u}{\bar u}&=-\int_\Omega  \bar u^t (T_1 \bar u)_x +  \bar u^t (T_2 \bar u)_y \text{d}x\text{d}y\\
&= \frac12\int_\Omega -\bar u^t T_{1x} \bar u -  \bar u^t T_{2y}\bar u \,\text{d}x\text{d}y -\frac12\int_0^{L_2}\bar u^t T_1\bar u\Big|_{x=0}^{x=L_1}\, \text{d}y - \frac12\int_0^{L_1} \bar u^t T_2 \bar u\Big|_{y=0}^{y=L_2}\, \text{d}x\\
&=(\text{using the boundary conditions \eqref{eq3.2.10} and the assumption $\alpha_1,\alpha_2>0$}) \\
&\geq  \frac12\int_\Omega - \bar u^t T_{1x}\bar u - \bar  u^t T_{2y} \bar u \,\text{d}x\text{d}y \\
&\geq -\omega_0\aiminnorm{\bar u}_{L^2}^2.
\end{split}
\end{equation*}
We can then conclude \eqref{eq3.2.14} by the density result Proposition \ref{prop3.2.3}.
\end{proof}

\subsubsection{Semigroup}
We are now ready to prove the main theorem in this subsection.
\begin{thm}\label{thm3.2.2}
The operator $-\mathcal T_2$ is the infinitesimal generator of a quasi-contraction semigroup on $H^2=L^2(\Omega)^2$.
\end{thm}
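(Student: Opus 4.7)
The plan is to invoke Theorem~\ref{S-thm2.5} (the Hille--Yosida criterion for quasi-contraction semigroups), exactly as in the proof of Theorem~\ref{thm3.1.1}. According to that criterion, it suffices to verify
\begin{enumerate}[(i)]
\item $\mathcal T_2$ and $\mathcal T_2^*$ are both closed and densely defined on $H^2=L^2(\Omega)^2$;
\item $\mathcal T_2$ and $\mathcal T_2^*$ are both quasi-positive, with the same constant $\omega_0$.
\end{enumerate}
Item (ii) has already been established: Proposition~\ref{prop3.2.2} gives quasi-positivity of $\mathcal T_2$ and Proposition~\ref{prop3.2.4} gives the same bound for $\mathcal T_2^*$.

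For (i), density of the two domains is immediate. Indeed, $\mathcal D(\Omega)^2$ is dense in $L^2(\Omega)^2$, and any element of $\mathcal D(\Omega)^2$ vanishes in a neighborhood of $\partial\Omega$, so it automatically satisfies both the boundary conditions \eqref{eq3.2.2} and \eqref{eq3.2.10}; hence $\mathcal D(\Omega)^2\subset \mathcal D(\mathcal T_2)\cap \mathcal D(\mathcal T_2^*)$. Closedness of $\mathcal T_2^*$ is automatic since the adjoint of a densely defined operator is always closed.

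The only nontrivial point is to show that $\mathcal T_2$ itself is closed. We follow the pattern of the closedness argument in Theorem~\ref{thm2.1.1}. Let $\{u_n\}\subset \mathcal D(\mathcal T_2)$ satisfy $u_n\to u$ and $\mathcal T_2 u_n\to v$ in $L^2(\Omega)^2$. Since the coefficients of $T_1,T_2$ lie in $\mathcal C^{1,\gamma}(\overline\Omega)$, the distributional derivatives of $u_n$ converge to those of $u$, so $T_1u_x+T_2u_y=v$ in $\mathcal D'(\Omega)$ and hence in $L^2(\Omega)^2$. Therefore $u\in\mathcal X(\Omega)$ in the sense of Appendix~\ref{sec-integration}, and the integration-by-parts formula of Theorem~\ref{thmg.1} gives a well-defined trace that depends continuously on the graph norm. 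Applying this to the sequence $u_n-u$ shows that the traces $\gamma_0 u_n$ converge weakly to $\gamma_0 u$ in the appropriate trace space, so $u$ inherits the boundary conditions \eqref{eq3.2.2}. Thus $u\in \mathcal D(\mathcal T_2)$ and $\mathcal T_2 u=v$, proving closedness.

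The main obstacle is precisely this trace-passage step: in the variable-coefficient setting we cannot rely on the clean constant-coefficient trace result of Proposition~\ref{M-propb.2}, so we must route through the space $\mathcal X(\Omega)$ and the integration-by-parts formula of Appendix~\ref{sec-integration} to recover weak continuity of the traces along the graph-convergent sequence; everything else in the proof is a direct combination of the preceding propositions.
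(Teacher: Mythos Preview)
Your overall strategy---invoke Theorem~\ref{S-thm2.5} after combining Propositions~\ref{prop3.2.2} and~\ref{prop3.2.4}---is exactly the paper's approach, and your treatment of density and of the closedness of $\mathcal T_2^*$ is fine. The gap is in your closedness argument for $\mathcal T_2$. Theorem~\ref{thmg.1} gives you the trace $\gamma_\nu\boldsymbol\theta = T_1\boldsymbol\theta\,\nu_x + T_2\boldsymbol\theta\,\nu_y \in H^{-1/2}(\Gamma)$ for $\boldsymbol\theta\in\mathcal X(\Omega)$, and indeed $\gamma_\nu u_n\to\gamma_\nu u$ in $H^{-1/2}$. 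But the boundary conditions \eqref{eq3.2.2} are componentwise Dirichlet conditions $u_1=0$, $u_2=0$ on different parts of $\partial\Omega$; these are read through $\gamma_0$, which requires $H^1$-regularity. On $\Gamma_W$, for instance, $\gamma_\nu u = -T_1 u = -(\alpha_1 u_1+\beta_1 u_2,\;\beta_1 u_1-\alpha_1 u_2)$, and knowledge of this combination in $H^{-1/2}$ does not let you isolate $u_1|_{\Gamma_W}=0$ without first knowing $u\in H^1$. So the step ``the traces $\gamma_0 u_n$ converge weakly to $\gamma_0 u$'' is not justified: $\gamma_0 u$ is not even defined at that stage.

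The paper sidesteps this entirely by leaning on Proposition~\ref{prop3.2.1}, which identifies $\mathcal D(\mathcal T_2)=V$ and carries with it (via the same proof as Proposition~\ref{prop2.2.1}) the two-sided elliptic estimate analogous to \eqref{eq2.2.8}. That estimate makes the graph norm on $\mathcal D(\mathcal T_2)$ equivalent to the $H^1$-norm on $V$; since $V$ is a closed subspace of $H^1(\Omega)^2$, closedness of $\mathcal T_2$ follows immediately. Once you invoke Proposition~\ref{prop3.2.1} for this purpose, your proof becomes complete and coincides with the paper's.
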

\begin{proof}
Since $\mathcal T_2$ is continuous from $V$ to $H^2$ and $V$ is dense in $H^2$, it is clear that $\mathcal T_2$ is a closed, densely defined operator on $H^2$. Combining Propositions \ref{prop3.2.2}, \ref{prop3.2.4} and applying Theorem \ref{S-thm2.5} to $\mathcal T_2$, we obtain the result.
\end{proof}

\subsection{The full system}\label{subsec3.3}
We are now ready to consider the full system \eqref{eq2.0.1} in the variable coefficients case, that is
\begin{equation}\begin{cases}\label{eq3.3.1}
u_t + A_1 u_x + A_2 u_y = f,\\
u(0)=u_0,\\
u\text{ satisfies \emph{suitable boundary conditions}},
\end{cases}\end{equation}
where $u=(u_1,\cdots,u_n)^t$, $f=(f_1,\cdots,f_n)^t$, and $A_1=A_1(x,y)$, $A_2=A_2(x,y)$ are real non-singular symmetric $n\times n$ matrices. 

\textbf{Main Assumptions:}
We assume that 
\addtocounter{equation}{1}
\newcounter{aiminspecialeqn}
\numberwithin{aiminspecialeqn}{section}
\setcounter{aiminspecialeqn}{\value{equation}}
\begin{enumerate}[\quad(\theaiminspecialeqn a) $\bullet$]
  \item $A_1,A_2$ belong to $\mathcal C^{1,\gamma}(\overline\Omega)$ for some $0<\gamma<1$,\label{eq3.3.2a}
\end{enumerate}
and as $(x,y)$ vary in $\overline\Omega$:
\begin{enumerate}[\quad(\theaiminspecialeqn a) $\bullet$]
  \setcounter{enumi}{1}
  \item all eigenvalues of $A_1,A_2$ remain either positive away or negative away from zero,\label{eq3.3.2b}
  \item all the real eigenvalues of $A_1^{-1}A_2$ and all the imaginary part of complex eigenvalues of $A_1^{-1}A_2$ remain either positive away or negative away from zero,\label{eq3.3.2c}
  \item the multiplicities of the eigenvalues of $A_1^{-1}A_2$ remain individually constant.\label{eq3.3.2d}
\end{enumerate}

By Theorem \ref{thm2.3.1}, we know that there exists a non-singular matrix $P=P(x,y)$ which can diagonalize $A_1$ and $A_2$ simultaneously, i.e.
\begin{equation}
  \begin{split}
P^tA_1P=\bar A_1=\text{diag}(C_1,\cdots,C_m),\\
P^tA_2P=\bar A_2=\text{diag}(D_1,\cdots,D_m),\\  
  \end{split}
\end{equation}
for some integer $m$ satisfying $1\leq m\leq n$, where the pair $(C_i,D_i)$ ($i=1,\cdots,m$) satisfies the conclusion in Theorem \ref{thm2.3.1}. As in Subsection \ref{subsec2.4}, we first define the unbounded operator $\bar A$ on $H^n=L^2(\Omega)^n$ with $\bar A\bar u=\bar A_1\bar u_x+\bar A_2\bar u_y$, $\forall\,\bar u\in \mathcal D(\bar A)$, and
\begin{equation}\label{eq3.3.3}
\mathcal D(\bar A)=\aiminset{\bar u\in H^n\,:\,\bar A\bar u\in H^n,\, \bar u\text{ satisfies \emph{suitable boundary conditions}}},
\end{equation}
where the \emph{suitable boundary conditions} were already explained in Subsection \ref{subsec2.4}.
Therefore, combining Theorem \ref{thm3.1.1} and Theorem \ref{thm3.2.2}, we obtain the following result.
\begin{lemma}\label{lem3.3.1}
The operator $-\bar A$ is the infinitesimal generator of a quasi-contraction semigroup on $H^n=L^2(\Omega)^n$.
\end{lemma}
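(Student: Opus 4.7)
The strategy is to exploit the block-diagonal structure delivered by Theorem \ref{thm2.3.1} and reduce the claim to the two model cases already treated in Subsections \ref{subsec3.1} and \ref{subsec3.2}. Writing $\bar u = (\bar u^{(1)}, \ldots, \bar u^{(m)})^t$ according to the block partition induced by the pairs $(C_i, D_i)$, the operator $\bar A$ acts blockwise as
\[
\bar A \bar u = \big( C_1 \bar u^{(1)}_x + D_1 \bar u^{(1)}_y, \ldots, C_m \bar u^{(m)}_x + D_m \bar u^{(m)}_y \big)^t.
\]
Since the \emph{suitable boundary conditions} prescribed in Subsection \ref{subsec2.4} are assigned block by block, the domain factorizes as $\mathcal D(\bar A) = \prod_{i=1}^m \mathcal D(\bar A^{(i)})$, where $\bar A^{(i)}$ is an unbounded operator on $L^2(\Omega)^{n_i}$ with $n_i \in \{1,2\}$ depending on the type of $(C_i,D_i)$.

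For each $i$ I would identify $\bar A^{(i)}$ with an operator already handled. If $(C_i, D_i)$ is of \emph{Type I}, then $\bar A^{(i)}$ coincides with the scalar operator $\mathcal T_1$ of Subsection \ref{subsec3.1}, with coefficients $a_1 = C_i$ and $a_2 = D_i$; assumption (\ref{eq3.3.2b}) secures that these coefficients are $\mathcal C^{1,\gamma}$ and bounded away from zero with constant sign on $\overline\Omega$, so the boundary conditions are assigned consistently (in the spirit of Remark \ref{rmk2.1.1}), and Theorem \ref{thm3.1.1} yields a quasi-contraction semigroup $S^{(i)}(t)$ on $L^2(\Omega)$ generated by $-\bar A^{(i)}$, with some growth bound $\omega_i \geq 0$. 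If instead $(C_i, D_i)$ is of \emph{Type II}, then Theorem \ref{thm2.3.1} provides the normalization $\alpha_2\beta_1 - \alpha_1\beta_2 \equiv 1$ required by assumption \eqref{asp3.2.1}, while (\ref{eq3.3.2a})--(\ref{eq3.3.2d}) supply the regularity and the uniform sign of $\alpha_1,\alpha_2$ needed in Subsection \ref{subsec3.2}. Theorem \ref{thm3.2.2} then yields a quasi-contraction semigroup $S^{(i)}(t)$ on $L^2(\Omega)^2$ generated by $-\bar A^{(i)}$, again with some $\omega_i \geq 0$.

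The last step is reassembly. I would set $S(t) := \mathrm{diag}\bigl(S^{(1)}(t), \ldots, S^{(m)}(t)\bigr)$ acting blockwise on $H^n = \prod_{i=1}^m L^2(\Omega)^{n_i}$. The standard fact that a finite direct sum of $C_0$-semigroups on Hilbert spaces is again a $C_0$-semigroup, whose generator is the direct sum of the individual generators with domain equal to the product of the individual domains, shows that $-\bar A$ is indeed a generator. The quasi-contraction estimate
\[
\aiminnorm{S(t) \bar u}^2_{H^n} \;=\; \sum_{i=1}^m \aiminnorm{S^{(i)}(t) \bar u^{(i)}}^2 \;\leq\; e^{2\omega_0 t}\, \aiminnorm{\bar u}^2_{H^n}, \qquad \omega_0 := \max_{1 \leq i \leq m} \omega_i,
\]
then follows at once.

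No genuinely new analytical obstacle arises at this stage, since the difficult work has been discharged by Theorems \ref{thm3.1.1} and \ref{thm3.2.2}, which in turn rested on the density results of Appendix \ref{sec-density}, the $\mathcal X(\Omega)$-duality machinery of Appendix \ref{sec-integration}, and the Beltrami-equation ingredient behind Theorem \ref{thma.2}. The only point requiring verification is that the block-wise choice of boundary conditions and sign conventions is consistent across $\overline\Omega$ (so that one does not switch from, say, the Type II variant \eqref{eq2.2.2} to the variant \eqref{eq2.2.2-1} as $(x,y)$ moves through the domain); this is precisely what the main assumptions (\ref{eq3.3.2b})--(\ref{eq3.3.2d}) are designed to guarantee, and so the block-by-block reduction goes through without modification.
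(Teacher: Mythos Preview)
Your proposal is correct and follows exactly the approach the paper takes: the paper's proof is literally the one-line remark ``combining Theorem \ref{thm3.1.1} and Theorem \ref{thm3.2.2}, we obtain the following result,'' and you have simply fleshed out the block-by-block reduction and reassembly that this sentence encodes. Nothing is missing, and your discussion of the role of the main assumptions (\theaiminspecialeqn\ref{eq3.3.2a})--(\theaiminspecialeqn\ref{eq3.3.2d}) in securing consistent sign choices across $\overline\Omega$ is the right way to justify why the individual-block theorems apply globally.
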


We then define the unbounded operator $B_0$ on $H^n$ with $B_0u=P^{-t}\bar A P^{-1}u$, $\forall u\in\mathcal D(B_0)$, and
\[
\mathcal D(B_0)=\aiminset{u\in H^n=L^2(\Omega)^n\,:\,u=P\bar u,\,\bar u\in\mathcal D(\bar A)}.
\]
By virtue of Theorem \ref{T-thm2.5} and Lemma \ref{lem3.3.1}, we find that
the operator $-B_0$ is the infinitesimal generator of a quasi-contraction semigroup on $H^n=L^2(\Omega)^n$.
Direct computations show that $B_0u=A_1P(P^{-1}u)_x + A_2P(P^{-1}u)_y$. Set $B_1u=A_1P_xP^{-1}u+A_2P_yP^{-1}u$, and we see that $B_1$ is a linear bounded operator on $H^n$ by assumption (\theaiminspecialeqn\ref{eq3.3.2a}). We now define the unbounded operator $A$ on $H^n$ by setting $Au=B_0u+B_1u$ with $\mathcal D(A)=\mathcal D(B_0)$. According to the Bounded Perturbation Theorem \ref{S-thm2.3}, we obtain our main result.
\begin{thm}\label{thm3.3.1}
We assume that (\theaiminspecialeqn\ref{eq3.3.2a})-(\theaiminspecialeqn\ref{eq3.3.2d}) hold. Then the operator $-A$ is the infinitesimal generator of a quasi-contraction semigroup on $H^n=L^2(\Omega)^n$.
\end{thm}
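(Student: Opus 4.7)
The plan is to realize $A = B_0 + B_1$ as a bounded perturbation of $B_0$ and to apply the Bounded Perturbation Theorem~\ref{S-thm2.3}, exploiting the fact that the quasi-contraction semigroup for $-\bar A$ has already been established in Lemma~\ref{lem3.3.1}. First I would confirm that $-B_0$ is the infinitesimal generator of a quasi-contraction semigroup on $H^n$. Thanks to assumption (\theaiminspecialeqn\ref{eq3.3.2a}) and the proof of Theorem~\ref{thm2.3.1} in Appendix~\ref{sec-diagonal}, the diagonalizing matrix $P=P(x,y)$ can be chosen in $\mathcal C^{1,\gamma}(\overline\Omega)$, with $P^{-1}$ also bounded and of class $\mathcal C^{1,\gamma}$; hence pointwise multiplication by $P$ (respectively $P^{-1}$, $P^{-t}$) defines a topological isomorphism of $H^n=L^2(\Omega)^n$ onto itself. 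Since by construction $B_0 u = P^{-t}\bar A(P^{-1}u)$ and $\mathcal D(B_0) = P\,\mathcal D(\bar A)$, Theorem~\ref{T-thm2.5} transfers the quasi-contraction semigroup property from $-\bar A$ to $-B_0$.

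Next I would verify that the remainder $B_1 u = A_1 P_x P^{-1} u + A_2 P_y P^{-1} u$ is a bounded linear operator on $H^n$. Under assumption (\theaiminspecialeqn\ref{eq3.3.2a}), $A_1$ and $A_2$ lie in $\mathcal C^{1,\gamma}(\overline\Omega)$, and the $\mathcal C^{1,\gamma}$ regularity established for $P$ yields the same regularity for $P_x$, $P_y$, and $P^{-1}$. Consequently the matrix-valued coefficients of $B_1$ are continuous on the compact closure $\overline\Omega$, hence uniformly bounded, so $B_1$ is a multiplication operator with $\aiminnorm{B_1}_{\mathcal L(H^n)} < \infty$.

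Finally I would invoke the Bounded Perturbation Theorem~\ref{S-thm2.3} applied to the decomposition $-A = -B_0 + (-B_1)$ with $\mathcal D(A) = \mathcal D(B_0)$: since $-B_0$ generates a $C_0$-semigroup satisfying $\aiminnorm{S(t)} \leq e^{\omega_0 t}$ for some $\omega_0 \geq 0$ and $-B_1 \in \mathcal L(H^n)$, the sum generates a $C_0$-semigroup with growth bound at most $\omega_0 + \aiminnorm{B_1}$, yielding precisely the quasi-contraction property claimed for $-A$. The step requiring the most attention is the regularity of the diagonalizer $P$: assumptions (\theaiminspecialeqn\ref{eq3.3.2b})--(\theaiminspecialeqn\ref{eq3.3.2d}), namely the separation of real eigenvalues of $A_1^{-1}A_2$ from zero, the separation of imaginary parts of the complex eigenvalues from zero, and the constancy of multiplicities, together rule out any eigenvalue crossing or degeneration, so the block-by-block construction from Theorem~\ref{thm2.3.1} yields a $\mathcal C^{1,\gamma}$ family of diagonalizers. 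Without this smoothness, $B_1$ would fail to be bounded and the perturbation argument would collapse.
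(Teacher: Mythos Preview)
Your proposal is correct and follows essentially the same route as the paper: transfer the quasi-contraction property from $-\bar A$ (Lemma~\ref{lem3.3.1}) to $-B_0$ via Theorem~\ref{T-thm2.5}, verify that $B_1$ is a bounded multiplication operator using the regularity assumption~(\theaiminspecialeqn\ref{eq3.3.2a}), and conclude with the Bounded Perturbation Theorem~\ref{S-thm2.3}. One minor slip: if $P\in\mathcal C^{1,\gamma}(\overline\Omega)$ then $P_x,P_y$ are only $\mathcal C^{0,\gamma}$, not $\mathcal C^{1,\gamma}$ as you wrote, but continuity on $\overline\Omega$ is all you need for the boundedness of $B_1$, so the argument is unaffected.
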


Direct computations show that $Au=A_1u_x + A_2u_y$, and we thus obtain that the initial and boundary value problem \eqref{eq3.3.1} is equivalent to the abstract initial value problem
\begin{equation}\begin{cases}\label{eq3.3.4}
\frac{\text{d}u}{\text{d}t} + Au = f,\\
u(0)=u_0.
\end{cases}\end{equation}
The \emph{suitable boundary conditions} are already taken into account in the domain of $\mathcal D(A)$. Thanks to Theorem \ref{thm3.3.1} this problem is now solved by the Hille-Yoshida theorem exactly as for Theorem \ref{thm2.4.2}; we omit the details here.

\section{Applications}\label{sec4}
In this section, we consider some applications of our general results on hyperbolic partial differential equations in a rectangle. We mainly focus on showing that the equations in consideration satisfy the assumptions in Theorem~\ref{thm2.4.2}, or can be transformed into the desired diagonalization form (see Theorem~\ref{thm2.3.1}).

Here, we only consider the constant coefficients case for the sake of convenience, the variable coefficients case would be similar with suitable assumptions.
\subsection{The inviscid shallow water equations}\label{subsec4.1}
The linearized 2d inviscid shallow water equations (SWEs) have been directly studied in \cite{HT12} in the case of constant coefficients case; let us briefly show that those equations satisfy the assumptions in Theorem~\ref{thm2.4.2}.
The linearized 2d inviscid SWEs read
\begin{equation}\label{eq4.1.1}
\begin{cases}
u_t+u_0u_x + v_0u_y + g\phi_x -fv= 0, \\
v_t+u_0v_x + v_0v_y + g\phi_y + fu= 0, \\
\phi_t+u_0\phi_x + v_0\phi_y + \phi_0(u_x+v_y) = 0;
\end{cases}
\end{equation}
where $u,v$ are the horizontal velocities, $\phi$ is the height of the fluid in consideration, and $u_0,v_0$ are the reference velocities, $\phi_0$ is the reference height, and $g$ is the gravitational acceleration, $f$ is the Coriolis parameter. We assume that $u_0,v_0,\phi_0$ are positive constants and only consider the generic case (see \cite[Section 1]{HT12}) where:
\[
u_0^2\ne g\phi_0, \quad v_0^2\ne g\phi_0,\quad u_0^2+v_0^2\neq g\phi_0.
\]
 Equations \eqref{eq4.1.1} are a set of hyperbolic partial differential equations, and can be written in the compact form
\begin{equation}\label{eq4.1.2}
U_t + \mathcal{E}_1U_x + \mathcal{E}_2U_y + BU = 0,
\end{equation}
where $U=(u,v,\phi)^t$, $BU=(-fv,fu,0)^t$ and
\begin{equation*}
\mathcal{E}_1=\begin{pmatrix}
u_0&0&g\\
0&u_0&0\\
\phi_0&0&u_0
\end{pmatrix},\hspace{6pt}
\mathcal{E}_2=\begin{pmatrix}
v_0&0&0\\
0&v_0&g\\
0&\phi_0&v_0
\end{pmatrix}.
\end{equation*}
We observe that \eqref{eq4.1.2} is \emph{Friedrichs symmetrizable}, i.e. $\mathcal{E}_1$, $\mathcal{E}_2$ admit a symmetrizer $S_0=\text{diag}(1,1,g/\phi_0)$. As indicated at the beginning of Section \ref{sec2}, we make the change of variable $\widetilde U=S_0^{1/2}U$, and rewrite \eqref{eq4.1.2} in the symmetric form: 
\begin{equation}\label{eq4.1.3}
\widetilde U_t + \widetilde{\mathcal{E}}_1\widetilde U_x + \widetilde{\mathcal{E}}_2\widetilde U_y + \widetilde B\widetilde U = 0,
\end{equation}
where $\widetilde B = S_0^{1/2}BS_0^{-1/2}$, and 
\begin{equation*}
\widetilde{\mathcal{E}}_1=S_0^{1/2}\mathcal{E}_1S_0^{-1/2}=\begin{pmatrix}
u_0&0&\sqrt{g\phi_0}\\
0& u_0&0\\
\sqrt{g\phi_0}&0& u_0
\end{pmatrix},\quad
\widetilde{\mathcal{E}}_2=S_0^{1/2}\mathcal{E}_2S_0^{-1/2}=\begin{pmatrix}
v_0&0&0\\
0& v_0&\sqrt{g\phi_0}\\
0&\sqrt{g\phi_0}& v_0
\end{pmatrix}.
\end{equation*}
According to Theorem \ref{thm2.4.2}, we only need to show that $ \widetilde{\mathcal{E}}_1^{-1} \widetilde{\mathcal{E}}_2$ is diagonalizable over $\mathbb C$. Note that $\widetilde{\mathcal{E}}_1^{-1} \widetilde{\mathcal{E}}_2=S_0^{1/2}\mathcal{E}_1^{-1} \mathcal{E}_2S_0^{-1/2}$, and direct computation yields
\begin{equation}
P^{-1}\cdotp \mathcal{E}_1^{-1}\mathcal{E}_2\cdotp P = diag(\lambda_1, \lambda_2, \lambda_3),
\end{equation}
where $P$ has a complicated expression, whereas
\begin{equation*}
P^{-1}=\begin{pmatrix}
&\frac{v_0}{2\kappa_0} &-\frac{u_0}{2\kappa_0} &\frac{1}{2} \\
&-\frac{v_0}{2\kappa_0} &\frac{u_0}{2\kappa_0} &\frac{1}{2} \\
&\frac{u_0v_0}{u_0^2+v_0^2}&\frac{v_0^2}{u_0^2+v_0^2}&\frac{gv_0}{u_0^2+v_0^2}
\end{pmatrix},
\end{equation*}
where $\kappa_0=\sqrt{g(u_0^2+v_0^2-g\phi_0)/\phi_0}$, and
\begin{equation}
\lambda_1=\frac{u_0v_0+\phi_0\kappa_0}{u_0^2-g\phi_0},\hspace{6pt}
\lambda_2=\frac{u_0v_0-\phi_0\kappa_0}{u_0^2-g\phi_0},\hspace{6pt}
\lambda_3=\frac{v_0}{u_0}.
\end{equation}
Therefore, we can conclude that $\widetilde{\mathcal{E}}_1^{-1} \widetilde{\mathcal{E}}_2$ is diagonalizable over $\mathbb C$.

We finally remark that in \cite{HT12} we only studied the linearized SWEs in the constant coefficients case, but now with the results in Subsection \ref{subsec3.3}, we are also able to study the linearized SWEs in the variable coefficients case.

\subsection{The shallow water magnetohydrodynamics}\label{subsec4.3}
Recently, the equations of "shallow water" magnetohydrodynamics (SWMHD͒) have been proposed by Gilman in \cite{Gil00} in order to study the global dynamics of the solar tachocline, which is a thin layer in the solar interior at the base of the solar convection zone. In \cite{Ste01}, the author studied the properties of the SWMHD equations as a nonlinear system of hyperbolic conservation laws; and in \cite{Del02}, the author studies the Hamiltonian and symmetric hyperbolic structures of the SWMHD equations. 
With our general results on hyperbolic systems at hand, we are able to study the linearized 2d SWMHD equations in a rectangle. The linearized 2d SWMHD equations (see \cite{Gil00,Ste01,Del02}) read in compact form
\begin{equation}\label{eq4.3.1}
  U_t + \mathcal{E}_1 U_x + \mathcal{E}_2 U_y = 0,
\end{equation}
where $U=(u,v,b_1,b_2,\phi)^t$, $u,v$ are the fluid velocities, $b_1,b_2$ are the components of the magnetic field, $\phi$ is the height of the fluid in consideration, and 
\begin{equation}
\mathcal{E}_1 = \begin{pmatrix}
u_0&0&-b_{10}&0&g\\
0&u_0&0&-b_{10}&0\\
-b_{10}&0&u_0&0&0\\
0&-b_{10}&0&u_0&0\\
\phi_0&0&0&0&u_0
\end{pmatrix},\quad
\mathcal{E}_2 = \begin{pmatrix}
v_0&0&-b_{20}&0&0\\
0&v_0&0&-b_{20}&g\\
-b_{20}&0&v_0&0&0\\
0&-b_{20}&0&v_0&0\\
0&\phi_0&0&0&v_0
\end{pmatrix}.
\end{equation}
Here, $(u_0,v_0)$ are the reference velocities, $(b_{10},b_{20})$ are the reference magnetic field, $\phi_0$ is the reference height, and $g$ is the gravitational acceleration. 

We observe that \eqref{eq4.3.1} is also \emph{Friedrichs symmetrizable}, i.e. $\mathcal{E}_1$, $\mathcal{E}_2$ admit a symmetrizer $S_0=\text{diag}(1,1,1,1,g/\phi_0)$. Following the same calculations as in Subsection \ref{subsec4.1}, the initial and boundary value problem for \eqref{eq4.3.1} is well-posed as long as $\mathcal{E}_1^{-1}\mathcal{E}_2$ is diagonalizable over $\mathbb C$. We only consider the generic case in which we assume that
\begin{equation}\begin{cases}
u_0,v_0\neq 0,\quad u_0-b_{10},v_0-b_{20}\neq 0, \quad b_{10}^2 -u_0^2 + g\phi_0, b_{20}^2 -v_0^2 + g\phi_0\neq 0,\\
 (b_{10}b_{20} - u_0v_0)^2 - (b_{10}^2 -u_0^2 + g\phi_0)(b_{20}^2 -v_0^2 + g\phi_0)\neq 0.
\end{cases}\end{equation}
Direct computations shows that $\mathcal{E}_1^{-1}\mathcal{E}_2$ have five different eigenvalues which are (see \cite{Ste01})
\begin{equation}\begin{split}
\lambda_{1,2} &= \frac{b_{20} \pm v_0}{ b_{10} \pm u_0}, \quad  \lambda_5=\frac{v_0}{u_0}, \\
\lambda_{3,4} &= \frac{ b_{10}b_{20} - u_0v_0}{ b_{10}^2 -u_0^2 + g\phi_0 } \pm \frac{\sqrt{ (b_{10}b_{20} - u_0v_0)^2 - (b_{10}^2 -u_0^2 + g\phi_0)(b_{20}^2 -v_0^2 + g\phi_0) }}{ b_{10}^2 -u_0^2 + g\phi_0}.
\end{split}\end{equation}
Therefore, $\mathcal{E}_1^{-1}\mathcal{E}_2$ is diagonalizable over $\mathbb C$ and thus \eqref{eq4.3.1} is well-posed under suitable initial and boundary conditions.

\subsection{The Euler equation}\label{subsec4.4}
The motion of a compressible, inviscid fluid in the absence of heat convection is governed by the Euler equations, consisting of the mass, momentum and energy conservation laws (see e.g. \cite[Chapter 8]{Lio98}):
\begin{equation}\begin{cases}\label{eq4.4.1}
\partial_t \rho + \boldsymbol u\cdot\nabla\rho + \rho\nabla\cdot\boldsymbol u = 0,\\
\partial_t \boldsymbol u + (\boldsymbol u\cdot\nabla)\boldsymbol u + \rho^{-1}\nabla p=0,\\
\partial_t e + \boldsymbol u\cdot\nabla e + \rho^{-1}p\nabla\cdot\boldsymbol u = 0.
\end{cases}\end{equation}
where $\rho$ is the density, $\boldsymbol u$ is the velocity, $e$ is the internal energy and $p$ is the pressure. The equation of state (pressure law) reads
\begin{equation}\label{eq4.4.2}
  p=p(\rho,e).
\end{equation}
Here, we consider the linearized two dimensional Euler equations in a rectangle. Hence $\boldsymbol u=(u,v)^t$, and the Euler equations \eqref{eq4.4.1} linearized around the reference state $(u_0,v_0,\rho_0,e_0)$ becomes
\begin{equation}\label{eq4.4.3}
  U_t + \mathcal E_1 U_x + \mathcal E_2 U_y = 0,
\end{equation}
where $U=(u,v,\rho,e)^t$, and the two matrices $\mathcal E_1$ and $\mathcal E_2$ are
\begin{equation*}
  \mathcal E_1 = \begin{pmatrix}
    u_0&0&\frac{1}{\rho_0}\frac{\partial p}{\partial \rho}(\rho_0,e_0) & \frac{1}{\rho_0}\frac{\partial p}{\partial e}(\rho_0,e_0)\\
    0 & u_0 & 0 & 0 \\
    \rho_0 & 0 & u_0 & 0\\
    \frac{1}{\rho_0} p_0 & 0 & 0 & u_0
  \end{pmatrix},\; \mathcal E_2 = \begin{pmatrix}
    v_0 & 0 & 0 & 0\\
    0 & v_0 & \frac{1}{\rho_0}\frac{\partial p}{\partial \rho}(\rho_0,e_0) & \frac{1}{\rho_0}\frac{\partial p}{\partial e}(\rho_0,e_0)\\
    0 & \rho_0 & v_0 & 0 \\
    0 & \frac{1}{\rho_0} p_0 & 0 & v_0
  \end{pmatrix},
\end{equation*}
where $p_0 = p(\rho_0,e_0)$. We set 
$$S_0=\text{diag}(1,\,1,\,\frac{1}{\rho_0^2}\frac{\partial p}{\partial \rho}(\rho_0,e_0),\, \frac{1}{p(\rho_0,e_0)}\frac{\partial p}{\partial e}(\rho_0,e_0)),$$
and find that $S_0\mathcal E_1$ and $S_0\mathcal E_2$ are both symmetric. Therefore, with suitable assumptions on the reference state $(u_0,v_0,\rho_0,e_0)$ and the pressure law \eqref{eq4.4.2}, and following similar calculations as in Subsections \ref{subsec4.1}-\ref{subsec4.3}, we can obtain that the initial and boundary value problem \eqref{eq4.4.3} in a rectangle is well-posed.

\subsection{The wave equation}\label{subsec4.2}
The 2d wave equation in the first quadrant of the plane has already been studied in \cite{Tan78} and the multi-dimensional wave equation in a multi-dimensional corner domain has been studied in \cite{KO71}. Here, we consider the 2d wave equation in a rectangle:
\begin{equation}\label{eq4.2.1}
\frac{\partial^2 v}{\partial t^2}-\frac{\partial^2 v}{\partial x^2}-\frac{\partial^2 v}{\partial y^2} = h.
\end{equation}
We first reduce \eqref{eq4.2.1} to a symmetric hyperbolic system, for that purpose, we set
\begin{equation}\begin{cases}
u_1 = v_y - \alpha v_t,\\
u_2 = v_x - \beta v_t,
\end{cases}\end{equation}
where $\alpha^2 + \beta^2 = 1$, $\alpha$ and $\beta$ are real constants, $u=(u_1,u_2)^t$. By direct calculation, \eqref{eq4.2.1} is transformed into the system (see also \cite[Lemma 3.2]{Tan78}):
\begin{equation}\label{eq4.2.3}
\begin{pmatrix}
u_1\\
u_2
\end{pmatrix}_t + \begin{pmatrix}
-\beta&\alpha\\
\alpha&\beta
\end{pmatrix}\begin{pmatrix}
u_1\\
u_2
\end{pmatrix}_x + 
\begin{pmatrix}
\alpha&\beta\\
\beta&-\alpha
\end{pmatrix}\begin{pmatrix}
u_1\\
u_2
\end{pmatrix}_y = \begin{pmatrix}
-\alpha h\\
-\beta h
\end{pmatrix}.
\end{equation}
Then according to the results in Subsections \ref{subsec2.2}, we can obtain well-posedness results for system \eqref{eq4.2.3} under suitable boundary conditions, and we have infinitely many choice of boundary conditions. The case when $\alpha$ and $\beta$ are functions depending on the space variable $(x,y)$ can also be treated (see Subsection \ref{subsec3.2}). 

The differences between earlier works \cite{Tan78,KO71} and our work for the wave equation are the idea to assign the boundary conditions, the boundary conditions itself and the method implemented, where the Fourier transform method is used in both the articles \cite{Tan78,KO71}. 

\appendix

\section{Simultaneous diagonalization by congruence}\label{sec-diagonal}
In this appendix, we will prove a simultaneous diagonalization result by congruence which is essential for studying the full hyperbolic system. We first give some definitions in order to simplify the presentation.
\begin{defn}\label{defn2.3.1}
A pair of real numbers $(C, D)$ is called \emph{Type I} if both $C$ and $D$ are non-zero real numbers;
a pair of real matrices $(C, D)$ is called \emph{Type II} if the two matrices $(C,D)$ are of the form
\begin{equation}
\bigg(  \begin{pmatrix}
\alpha_1 & \beta_1\\
\beta_1 & -\alpha_1
\end{pmatrix},
  \begin{pmatrix}
\alpha_2 & \beta_2\\
\beta_2 & -\alpha_2
\end{pmatrix}
\bigg), \quad \text{ with }\alpha_2\beta_1-\alpha_1\beta_2> 0.
\end{equation}
It is called \emph{Standard Type II} if $\alpha_2\beta_1-\alpha_1\beta_2\equiv 1$.
\end{defn}

We then state the diagonalization result.
\begin{thm}\label{thm2.3.1}
Let $A_1,A_2$ be two non-singular real symmetric matrices. Assume that $A_1^{-1}A_2$ is diagonalizable over $\mathbb C$. Then there exists a non-singular real matrix $P$ such that
\begin{equation}\label{eq2.3.1}
  \begin{split}
P^tA_1P=\bar A_1=\text{diag}(C_1,\cdots,C_m),\\
P^tA_2P=\bar A_2=\text{diag}(D_1,\cdots,D_m),\\  
  \end{split}
\end{equation}
for some integer $m$ satisfying $1\leq m\leq n$, where the pair $(C_i,D_i)$ ($i=1,\cdots,m$) is either of 
\emph{Type I} or of \emph{Standard Type II}.
\end{thm}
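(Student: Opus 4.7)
The plan is to regard $M := A_1^{-1}A_2$ as an operator on $\mathbb C^n$ that is self-adjoint with respect to the non-degenerate complex symmetric bilinear form $\langle v,w\rangle_{A_1} := v^t A_1 w$, using the identity $\langle Mv,w\rangle_{A_1}=v^tA_2w=\langle v,Mw\rangle_{A_1}$ that follows from the symmetry of $A_1,A_2$. Diagonalizability of $M$ over $\mathbb C$ gives $\mathbb C^n=\bigoplus_\lambda E_\lambda$, the standard spectral argument yields $E_\lambda\perp_{A_1}E_\mu$ whenever $\lambda\ne\mu$, and non-singularity of $A_1$ then forces $\langle\cdot,\cdot\rangle_{A_1}|_{E_\lambda}$ to be a non-degenerate complex symmetric form on each eigenspace. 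Note also that $0\notin\mathrm{spec}(M)$, because $A_2$ is non-singular.

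For a real eigenvalue $\lambda$, the space $E_\lambda$ is the complexification of $E_\lambda^{\mathbb R}:=E_\lambda\cap\mathbb R^n$ and $A_1|_{E_\lambda^{\mathbb R}}$ is a non-degenerate real symmetric form. I would diagonalize it by a real congruence, obtaining a basis $e_1,\ldots,e_k$ with $e_i^tA_1e_j=c_i\delta_{ij}$, $c_i\ne 0$. Since $M=\lambda I$ on $E_\lambda^{\mathbb R}$, this gives $e_i^tA_2e_j=\lambda c_i\delta_{ij}$, so each pair $(c_i,\lambda c_i)$ is of Type I.

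For a non-real eigenvalue $\lambda=a+ib$ (paired with $\bar\lambda$; pick the representative with $b<0$), diagonalize the complex form $\langle\cdot,\cdot\rangle_{A_1}|_{E_\lambda}$ to obtain a complex basis $v_1,\ldots,v_k$ with $v_i^tA_1v_j=c_i\delta_{ij}$, $c_i\ne 0$. Write $v_j=x_j+iy_j$ with $x_j,y_j\in\mathbb R^n$. The cross-orthogonality $v_i^tA_1\bar v_j=0$ (valid for every $i,j$ because $E_\lambda\perp_{A_1}E_{\bar\lambda}$), together with $v_i^tA_1v_j=c_i\delta_{ij}$, forces, after separating real and imaginary parts, $x_i^tA_1x_j=y_i^tA_1y_j=x_i^tA_1y_j=0$ for $i\ne j$, and $x_i^tA_1x_i=-y_i^tA_1y_i=:\alpha_1^{(i)}$, $x_i^tA_1y_i=:\beta_1^{(i)}$ with $c_i=2\alpha_1^{(i)}+2i\beta_1^{(i)}$. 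Thus in the real basis $(x_1,y_1,\ldots,x_k,y_k)$ of $W_\lambda:=\mathrm{span}_{\mathbb R}\{x_j,y_j\}$, $A_1|_{W_\lambda}$ is block diagonal with $2\times 2$ blocks of the required Type II shape. Applying $Mv_j=\lambda v_j$ yields matching $A_2$-blocks with $\alpha_2^{(i)}=a\alpha_1^{(i)}-b\beta_1^{(i)}$ and $\beta_2^{(i)}=a\beta_1^{(i)}+b\alpha_1^{(i)}$, and a direct computation gives $\alpha_2^{(i)}\beta_1^{(i)}-\alpha_1^{(i)}\beta_2^{(i)}=-b\bigl((\alpha_1^{(i)})^2+(\beta_1^{(i)})^2\bigr)=-b\lvert c_i\rvert^2/4>0$. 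Rescaling $v_i\mapsto\mu_iv_i$ by a suitable $\mu_i\in\mathbb C^\ast$ (the phase of $\mu_i$ merely rotates the frame inside $\mathrm{span}_{\mathbb R}\{x_i,y_i\}$ and is harmless) multiplies this invariant by $\lvert\mu_i\rvert^4$, so a real-preserving choice normalizes each block to Standard Type II.

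Assembling the real bases produced for all eigenspaces into the columns of a single real matrix $P$, the global $A_1$-orthogonality of the eigenspaces with distinct eigenvalues (which passes to the corresponding real subspaces $E_\lambda^{\mathbb R}$ and $W_\mu$) guarantees that both $P^tA_1P$ and $P^tA_2P$ are simultaneously block-diagonal with the claimed Type I and Standard Type II structure. The main obstacle I expect is the third paragraph: converting the diagonalization of the \emph{complex} bilinear form on $E_\lambda$ into the clean block-diagonalization of the \emph{real} pair $(A_1,A_2)$ on $W_\lambda$, which depends crucially on the cross-orthogonality $E_\lambda\perp_{A_1}E_{\bar\lambda}$ to force the identities $x_i^tA_1x_i+y_i^tA_1y_i=0$ and their off-diagonal analogues, and then checking that the complex rescaling available for $v_i$ is enough to achieve the precise normalization $\alpha_2\beta_1-\alpha_1\beta_2=1$ without breaking the reality of the resulting basis.
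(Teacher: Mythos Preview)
Your argument is correct and takes a genuinely different route from the paper. The paper first puts $A_1^{-1}A_2$ into its real block-diagonal form $J=\mathrm{diag}(J_1,\dots,J_m)$ with each $J_i$ either $\lambda I$ or $\mathrm{diag}(E_0,\dots,E_0)$, then uses the identity $P^tA_1P\,J=J^tP^tA_1P$ together with a ``no common eigenvalue'' lemma to force $P^tA_1P$ to be block diagonal. Within each complex block of high multiplicity it then runs a Schur-complement induction to peel off $2\times2$ Type~II pieces. Your approach instead works over $\mathbb C$ from the outset: the symmetry of $A_1,A_2$ makes $M$ self-adjoint for the complex \emph{bilinear} form $v^tA_1w$, whence distinct eigenspaces are $A_1$-orthogonal and each restriction is non-degenerate; diagonalizing the complex symmetric form on $E_\lambda$ and extracting real and imaginary parts yields the Type~II blocks in one stroke, with the cross-orthogonality $E_\lambda\perp_{A_1}E_{\bar\lambda}$ supplying exactly the relations $x_i^tA_1x_i=-y_i^tA_1y_i$ etc.\ that you need. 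This handles all multiplicities uniformly and bypasses the induction. The paper's route, by contrast, stays over $\mathbb R$ throughout and produces the real Jordan form explicitly, which dovetails with how the decomposition is used later. For the normalization to Standard Type~II, note that a \emph{real} positive scalar $\mu_i$ already suffices (the invariant scales by $\mu_i^4$), so your worry about ``reality of the resulting basis'' is moot; this is in fact exactly the rescaling $V_i=\mathrm{diag}(\kappa_0,\kappa_0)$ the paper uses.
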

\begin{rmk}\label{rmk2.3.2}
Note that we did not assume the positivity of the matrices $A_1$ and $A_2$, hence,
Theorem \ref{thm2.3.1} goes beyond the elementary result that if $A_1$ is symmetric positive-definite and $A_2$ is symmetric, then $A_1^{-1}A_2$ is diagonalizable over $\mathbb R$ and hence over $\mathbb C$. The argument for this classical result is as follows: if $A_1$ is symmetric positive-definite, then there exists $P$ such that $A_1=PP^t$, and hence $A_1^{-1}A_2 = P^{-t}P^{-1}A_2$, which is similar to $P^tP^{-t}P^{-1}A_2P^{-t} = P^{-1}A_2P^{-t}$. Then we find that $P^{-1}A_2P^{-t}$ is symmetric and of course diagonalizable over $\mathbb R$, and hence $A_1^{-1}A_2$ is diagonalizable over $\mathbb R$.
\end{rmk}
\begin{rmk}\label{rmk2.3.1}
The congruence diagonalization of two real symmetric matrices is already addressed in \cite{Uhl73}, see \cite[Corollary 1.4]{Uhl73} where the assumption is  that $A_1^{-1}A_2$ is diagonalizable over $\mathbb R$; see also \cite[Chapter 4]{HJ12} for additional related results. Our Theorem \ref{thm2.3.1} extends the congruence diagonalization by real matrix to the case where $A_1^{-1}A_2$ is diagonalizable over $\mathbb C$.
\end{rmk}
We need the following result in order to prove Theorem \ref{thm2.3.1} (see e.g. \cite[pp. 403]{LZ88}).
\begin{prop}\label{prop2.3.1}
Let $k_1$ and $k_2$ be two positive integers, and let $R$ be a $k_1\times k_2$ matrix, and $S_1$ and $S_2$ be two order-$k_2$ and order-$k_1$ square matrices respectively. Assume that $S_1$ and $S_2$ have no common eigenvalue, and $RS_1=S_2R$. Then $R=0$.
\end{prop}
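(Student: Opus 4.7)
The plan is to exploit the commutation identity $RS_1 = S_2 R$ polynomially. First I would iterate the hypothesis: multiplying on the right by $S_1$ and using $RS_1 = S_2R$ gives $RS_1^2 = S_2 RS_1 = S_2^2 R$, and by induction $RS_1^k = S_2^k R$ for every nonnegative integer $k$. By linearity this extends to $Rp(S_1) = p(S_2)R$ for every polynomial $p \in \mathbb{C}[t]$ (it is harmless to work over $\mathbb{C}$ even though $R,S_1,S_2$ are real, since $R=0$ over $\mathbb{C}$ implies $R=0$ over $\mathbb{R}$).

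Next I would choose $p$ to be the characteristic polynomial $\chi_{S_1}$ of $S_1$. Cayley--Hamilton yields $\chi_{S_1}(S_1)=0$, so the identity above collapses to $\chi_{S_1}(S_2)\,R = 0$. It remains to show that $\chi_{S_1}(S_2)$ is invertible. Over $\mathbb{C}$ we can factor $\chi_{S_1}(t) = \prod_{i}(t-\lambda_i)$, where the $\lambda_i$ are the eigenvalues of $S_1$, hence
\begin{equation*}
\chi_{S_1}(S_2) \;=\; \prod_{i}(S_2 - \lambda_i I).
\end{equation*}
The spectrum of $S_2 - \lambda_i I$ is the spectrum of $S_2$ shifted by $-\lambda_i$, and by the assumption that $S_1$ and $S_2$ share no eigenvalue, $0$ is not in this spectrum. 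Therefore each factor $S_2 - \lambda_i I$ is invertible, the product $\chi_{S_1}(S_2)$ is invertible, and multiplying $\chi_{S_1}(S_2)R = 0$ on the left by the inverse gives $R = 0$.

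The argument is essentially bookkeeping once the iteration step is in place, so there is no genuine obstacle; the only point requiring a moment of care is the passage from the commutation of powers to commutation of arbitrary polynomials, and the observation that the eigenvalues of $\chi_{S_1}(S_2)$ are exactly $\chi_{S_1}(\mu)$ as $\mu$ ranges over the eigenvalues of $S_2$, which are nonzero precisely because no eigenvalue of $S_2$ is a root of $\chi_{S_1}$. Alternatively, one could invoke the standard fact that the Sylvester operator $X \mapsto S_2 X - X S_1$ on the space of $k_1 \times k_2$ matrices has spectrum $\{\mu - \lambda : \mu \in \sigma(S_2),\,\lambda \in \sigma(S_1)\}$, and the disjointness hypothesis makes this operator invertible so its kernel is trivial; but the Cayley--Hamilton route above is more self-contained and avoids any spectral-theoretic machinery.
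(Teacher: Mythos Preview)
Your proof is correct; the Cayley--Hamilton argument you give is the standard proof of this fact (a special case of Sylvester's theorem on the equation $S_2X - XS_1 = 0$). Note, however, that the paper does not actually supply its own proof of this proposition: it simply cites \cite[pp.~403]{LZ88} and moves on. So there is nothing to compare against beyond observing that your write-up is self-contained where the paper defers to a reference, which is a net gain.
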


\begin{proof}[Proof of Theorem \ref{thm2.3.1}]

Since $A_1^{-1}A_2$ is diagonalizable in $\mathbb C$, then by the standard diagonalization theory,
there exists a real non-singular matrix $P$ such that
\begin{equation}\label{eq2.3.11}
  P^{-1}A_1^{-1}A_2P=J=\text{diag}(J_1,\cdots,J_m),
\end{equation}
for some integer $m$ ($1\leq m\leq n$), where the $J_i$ ($i=1,\cdots,m$) are one of the following types:
\[
\begin{pmatrix}
  \lambda& &\\
  &\ddots& \\
  &&\lambda
\end{pmatrix}\quad
\text{ or }
\begin{pmatrix}
  E_0 &&\\
  &\ddots& \\
  &&E_0
\end{pmatrix}\text{ with } E_0: = \begin{pmatrix}
    \mu_1&-\mu_2\\
    \mu_2&\mu_1
  \end{pmatrix},
\]
where $\lambda$ is a real eigenvalue of $A_1^{-1}A_2$, and $\mu_1+i\mu_2$ ($\mu_2> 0$) is a complex eigenvalue of $A_1^{-1}A_2$. Furthermore, $J_i$ and $J_j$ have no common eigenvalue for $i\neq j$.

Multiplying both sides of \eqref{eq2.3.11} on the left by $P^tA_1P$ yields
\begin{equation}\label{eq2.3.12}
  P^tA_2P = P^tA_1PJ.
\end{equation}
The left-hand side of \eqref{eq2.3.12} is a symmetric matrix, we thus have that the right-hand side of \eqref{eq2.3.12} is also symmetric, which means that $P^tA_1PJ = J^tP^tA_1P$, that is
\begin{equation}\label{eq2.3.13}
 \begin{pmatrix}
  A_{11}&\cdots&A_{1m}\\
 \cdots &\cdots&\cdots\\
  A_{m1}&\cdots&A_{mm}\\
\end{pmatrix} \begin{pmatrix}
  J_1&&\\
  &\ddots&\\
  &&J_m
\end{pmatrix} = \begin{pmatrix}
  J_1^t&&\\
  &\ddots&\\
  &&J_m^t
\end{pmatrix}\begin{pmatrix}
  A_{11}&\cdots&A_{1m}\\
 \cdots &\cdots&\cdots\\
  A_{m1}&\cdots&A_{mm}\\
\end{pmatrix},
\end{equation}
where, similarly to the block diagonal matrix $J$, we write $P^tA_1P$ as:
\[
P^tA_1P = \begin{pmatrix}
  A_{11}&\cdots&A_{1m}\\
 \cdots &\cdots&\cdots\\
  A_{m1}&\cdots&A_{mm}\\
\end{pmatrix}.
\]
Comparing the two matrices on both sides of \eqref{eq2.3.13}, we find that $A_{ij}J_j=J_i^tA_{ij}$ for all $1\leq i,j\leq m$. Noticing that the $J_i$'s have no common eigenvalue and using Proposition~\ref{prop2.3.1}, we obtain that $A_{ij}=0$ for $i\neq j$, and hence that $P^tA_1P$ is block diagonal, that is
\begin{equation}\label{eq2.3.14}
  P^tA_1P = \text{diag}(A_{11},\cdots,A_{mm}).
\end{equation}
We then infer from \eqref{eq2.3.12} that $P^tA_2P$ is also block diagonal, that is
\begin{equation}\label{eq2.3.15}
  P^tA_2P = \text{diag}(A_{11}J_1,\cdots,A_{mm}J_m).
\end{equation}

By \eqref{eq2.3.14} and \eqref{eq2.3.15}, we see that it is enough to show that $A_{ii}$ and $A_{ii}J_i$ can be simultaneously diagonalized by congruence into the form \eqref{eq2.3.1} for all $i=1,\cdots,m$. 
We only need to consider the following three cases.

$i)$ Assume that $J_i$ is of form $\lambda I_k$, where the eigenvalue $\lambda\in\mathbb R$ is of multiplicity $k$. We know that there exists an orthogonal matrix $V_i$ such that $V_i^tA_{ii}V_i$ is diagonal. Moreover $V_i^t A_{ii}J_i V_i=\lambda V_i^t A_{ii} V_i$ which is also diagonal. Hence, $A_{ii}$ and $A_{ii}J_i$ can be simultaneously diagonalized by congruence into \emph{Type I}.

$ii)$ Assume that $J_i$ is of form $\displaystyle E_0=\begin{pmatrix}
    \mu_1&-\mu_2\\
    \mu_2&\mu_1
  \end{pmatrix}$, where the eigenvalue $\mu_1+i\mu_2$ ($\mu_2> 0$) is of multiplicity $1$. Since $A_{ii}$ is symmetric, we can assume that $\displaystyle
  A_{ii} = \begin{pmatrix}
\alpha_1 & \beta_1\\
\beta_1 & \tilde\alpha_1
\end{pmatrix}.
$
Since $A_{ii}J_i$ is also symmetric, we have
\begin{equation}\label{eq2.3.16}
 \begin{pmatrix}
\alpha_1 & \beta_1\\
\beta_1 & \tilde\alpha_1
\end{pmatrix}\begin{pmatrix}
    \mu_1&-\mu_2\\
    \mu_2&\mu_1
  \end{pmatrix}=\begin{pmatrix}
    \mu_1&\mu_2\\
    -\mu_2&\mu_1
  \end{pmatrix}\begin{pmatrix}
\alpha_1 & \beta_1\\
\beta_1 & \tilde\alpha_1
\end{pmatrix},
\end{equation}
which implies that $\alpha_1=-\tilde{\alpha_1}$. Direct computation shows that
\[
A_{ii}J_i=\begin{pmatrix}
\alpha_2 & \beta_2\\
\beta_2 & -\alpha_2
\end{pmatrix},
\]
where $\alpha_2= \alpha_1\mu_1+\beta_1\mu_2$, $\beta_2=\beta_1\mu_1-\alpha_1\mu_2$.
Notice that $\alpha_2\beta_1-\alpha_1\beta_2 = \mu_2(\alpha_1^2+\beta_1^2)>0$ since $\mu_2>0$. Hence, the pair $(A_{ii},\,A_{ii}J_i)$ is of \emph{Type II}. We now let $V_i=\text{diag}(\kappa_0,\kappa_0)$ with $\kappa_0=(\mu_2(\alpha_1^2+\beta_1^2))^{-1/4}$, so that
\begin{equation}\begin{split}
  V_i^tA_{ii}V_i = \kappa_0^2\begin{pmatrix}
\alpha_1 & \beta_1\\
\beta_1 & -\alpha_1
\end{pmatrix}=:\begin{pmatrix}
\bar\alpha_1 & \bar\beta_1\\
\bar\beta_1 & -\bar\alpha_1
\end{pmatrix},\\
  V_i^tA_{ii}J_iV_i = \kappa_0^2\begin{pmatrix}
\alpha_2 & \beta_2\\
\beta_2 & -\alpha_2
\end{pmatrix}=:\begin{pmatrix}
\bar\alpha_2 & \bar\beta_2\\
\bar\beta_2 & -\bar\alpha_2
\end{pmatrix},
\end{split}\end{equation}
and $\bar\alpha_2\bar\beta_1-\bar\alpha_1\bar\beta_2 = \kappa_0^4 \mu_2(\alpha_1^2+\beta_1^2) =1$.
Therefore, we find that $A_{ii}$ and $A_{ii}J_i$ can be simultaneously diagonalized by congruence into \emph{Standard Type II}.

$iii)$ Assume that $J_i$ is of form $\text{diag}(E_0,\cdots,E_0)_{2k\times 2k}$,
where the eigenvalue $\mu_1+i\mu_2$ ($\mu_2> 0$) is of multiplicity of $k$ ($k\geq 2$). Since $A_{ii}$ is symmetric, we can assume that
\[
A_{ii} = \begin{pmatrix}
  C_{11}&\cdots&C_{1k}\\
  &\cdots&\\
  C_{k1}&\cdots&C_{kk}
\end{pmatrix},
\]
where $C_{jj'}=C_{j'j}^t$ and $C_{jj'}$'s are $2\times 2$ matrices for all $1\leq j,j'\leq k$. Since $A_{ii}J_i$ is also symmetric, we thus find that
\begin{equation}\label{eq2.3.18}
  \begin{pmatrix}
  C_{11}&\cdots&C_{1k}\\
  &\cdots&\\
  C_{k1}&\cdots&C_{kk}
\end{pmatrix}\begin{pmatrix}
  E_0 &&\\
  &\ddots& \\
  &&E_0
\end{pmatrix}=\begin{pmatrix}
  E_0^t &&\\
  &\ddots& \\
  &&E_0^t
\end{pmatrix} \begin{pmatrix}
  C_{11}&\cdots&C_{1k}\\
  &\cdots&\\
  C_{k1}&\cdots&C_{kk}
\end{pmatrix}, 
\end{equation}
which, by comparing both sides of \eqref{eq2.3.18}, implies that $C_{jj'}E_0=E_0^tC_{jj'}$. That is
\begin{equation}\label{eq2.3.19}
 \begin{pmatrix}
\alpha & \tilde\beta\\
\beta & \tilde\alpha
\end{pmatrix}\begin{pmatrix}
    \mu_1&-\mu_2\\
    \mu_2&\mu_1
  \end{pmatrix}=\begin{pmatrix}
    \mu_1&\mu_2\\
    -\mu_2&\mu_1
  \end{pmatrix}\begin{pmatrix}
\alpha & \tilde\beta\\
\beta & \tilde\alpha
\end{pmatrix},
\end{equation}
where $\displaystyle
C_{jj'}=\begin{pmatrix}
\alpha & \tilde\beta\\
\beta & \tilde\alpha
\end{pmatrix}$. Since $\mu_2>0$, we then infer from \eqref{eq2.3.19} that $\tilde\beta=\beta$ and $\tilde\alpha=-\alpha$. Hence, the $C_{jj'}$ are all symmetric for $1\leq j,j'\leq k$, and we can conclude that
\begin{equation}\begin{split}\label{eq2.3.20}
A_{ii} = \begin{pmatrix}
  C_{11}&\cdots&C_{1k}\\
  &\cdots&\\
  C_{1k}&\cdots&C_{kk}
\end{pmatrix},\quad
A_{ii}J_i = \begin{pmatrix}
  D_{11}&\cdots&D_{1k}\\
  &\cdots&\\
  D_{1k}&\cdots&D_{kk}
\end{pmatrix}, 
\end{split}\end{equation}
where $D_{jj'} = C_{jj'}E_0$, and $C_{jj'}$ and $D_{jj'}$ are of the form  $\displaystyle
\begin{pmatrix}
\alpha &\beta\\
\beta & -\alpha
\end{pmatrix}$ for all $1\leq j,j'\leq k$. 
We are now in a position to prove that the pair ($A_{ii},A_{ii}J_i$) can be simultaneously diagonalized by congruence into \emph{Type II}, and we divide the proof into two steps.

\emph{Step I.} We first show that it is legitimate to assume that $C_{11}$ is non-singular. Indeed if $C_{11}$ is singular, we see that $C_{11}=0$ by its form $\displaystyle
\begin{pmatrix}
\alpha &\beta\\
\beta & -\alpha
\end{pmatrix}$. 
We now have two cases to consider.
\noindent\textbf{Case I:} If one of the $C_{jj}$ ($j=2,\cdots,k$) is non-singular, then without loss of generality, we assume that $C_{22}$ is non-singular. Then
\[
\begin{pmatrix}
0&I_2&\\
I_2&0&\\
&&I_{2k-4}
\end{pmatrix}
 \begin{pmatrix}
  C_{11}&C_{12}&\cdots&C_{1k}\\
  C_{12}&C_{22}&\cdots&C_{2k}\\
  &\cdots&\\
  C_{1k}&C_{2k}&\cdots&C_{kk}
\end{pmatrix}
\begin{pmatrix}
0&I_2&\\
I_2&0&\\
&&I_{2k-4}
\end{pmatrix}
= \begin{pmatrix}
  C_{22}&C_{12}&C_{23}&\cdots&C_{2k}\\
  C_{12}&C_{11}&C_{13}&\cdots&C_{1k}\\
  C_{23}&C_{13}&C_{33}&\cdots&C_{3k}\\
  &\cdots&\cdots&\cdots&\\
  C_{2k}&C_{1k}&C_{3k}&\cdots&C_{kk}
\end{pmatrix},
\]
where $I_2$ (resp. $I_{2k-4}$) is the identity matrix of order $2$ (resp. $2k-4$), and we see that $C_{22}$ is in the former position of $C_{11}$.
\textbf{Case II:} If all $C_{jj}$'s ($j=1,\cdots,k$) are singular, then one of $C_{1j}$ ($j=2,\cdots,k$) must be non-singular since $A_{ii}$ is non-singular. Without loss of generality, we assume that $C_{12}$ is non-singular, whereby
\begin{equation}\begin{split}\nonumber
\begin{pmatrix}
I_2&I_2&\\
-I_2&I_2&\\
&&I_{2k-4}
\end{pmatrix}
 \begin{pmatrix}
  0&C_{12}&\cdots&C_{1k}\\
  C_{12}&0&\cdots&C_{2k}\\
  &\cdots&\\
  C_{1k}&C_{2k}&\cdots&0
\end{pmatrix}
\begin{pmatrix}
I_2&-I_2&\\
I_2&I_2&\\
&&I_{2k-4}
\end{pmatrix}\\
=\begin{pmatrix}
  2C_{12}&0&C_{23}+C_{13}&\cdots&C_{2k} + C_{1k}\\
  0&-2C_{12}&C_{23}-C_{13}&\cdots&C_{2k}-C_{1k}\\
  C_{23}+C_{13}&C_{23}-C_{13}&C_{33}&\cdots&C_{3k}\\
  &\cdots&\cdots&\cdots&\\
  C_{2k} + C_{1k}&C_{2k} - C_{1k}&C_{3k}&\cdots&C_{kk}
\end{pmatrix},
\end{split}\end{equation}
and we see that $2C_{12}$ is in the former position of $C_{11}$.
Therefore, we can conclude that it is legitimate to assume that $C_{11}$ is non-singular under the congruence transformation. Once we have $C_{11}$ is non-singular, the corresponding $D_{11}$ is automatically non-singular since $D_{11}=C_{11}E_0$ and $E_0$ is non-singular.

\emph{Step II.} We now assume that $C_{11}$ is non-singular. Noticing that $D_{jj'}=C_{jj'}E_0$, we let
\[
V_i =\begin{pmatrix}
I_2&-C_{11}^{-1}C_{12}&\cdots&\cdots&-C_{11}^{-1}C_{1k}\\
&I_2&0&\cdots&0\\
&&\ddots&\ddots&\vdots\\
&&&I_2&0\\
&&&&I_2
\end{pmatrix}=\begin{pmatrix}
I_2&-D_{11}^{-1}D_{12}&\cdots&\cdots&-D_{11}^{-1}D_{1k}\\
&I_2&0&\cdots&0\\
&&\ddots&\ddots&\vdots\\
&&&I_2&0\\
&&&&I_2
\end{pmatrix}.
\]
Then direct computations (see Schur formula in \cite[pp. 150-151]{LZ88}) yields
\begin{equation}\begin{split}\label{eq2.3.23}
V_i^tA_{ii}V_i = \begin{pmatrix}
  C_{11}&0&\cdots&0\\
  0&\widetilde C_{22}&\cdots&\widetilde C_{2k}\\
  &\cdots&\cdots&\cdots&\\
  0&\widetilde C_{2k}&\cdots& \widetilde C_{kk}
\end{pmatrix},\quad
V_i^tA_{ii}J_iV_i  =  \begin{pmatrix}
  D_{11}&0&\cdots&0\\
  0&\widetilde D_{22}&\cdots&\widetilde D_{2k}\\
  &\cdots&\cdots&\cdots&\\
  0&\widetilde D_{2k}&\cdots& \widetilde D_{kk}
\end{pmatrix}, 
\end{split}\end{equation}
where $\widetilde C_{jj'} = C_{jj'} - C_{1j}C_{11}^{-1}C_{1j'}$ and $\widetilde D_{jj'} = D_{jj'} - D_{1j}D_{11}^{-1}D_{1j'}$ for all $2\leq j,j'\leq k$. We observe that all $\widetilde C_{jj'},\widetilde D_{jj'}$ are also of form  $\displaystyle
\begin{pmatrix}
\alpha &\beta\\
\beta & -\alpha
\end{pmatrix}$ for all $2\leq j,j'\leq k$. Therefore, by induction, we can obtain that the pairs $(A_{ii}, A_{ii}J_i)$ can be 
simultaneously diagonalized by congruence into \emph{Type II}. In order to diagonalize  $(A_{ii}, A_{ii}J_i)$ to the \emph{Standard Type II}, we use the same arguments as in case $ii)$. We thus completed the proof of Theorem \ref{thm2.3.1}.
\end{proof}
\section{An elliptic result}\label{sec-elliptic}

In this appendix, we prove an existence result for the first-order elliptic system in the domain $\Omega=(0,L_1)\times(0,L_2)$. Recall that $\Gamma_W,\Gamma_E,\Gamma_S,\Gamma_N$ are the boundaries $x=0,x=L_1,y=0,y=L_2$ respectively. 
We assume the following boundary conditions for all $j\in\aiminset{W,E,S,N}$:
\begin{equation}\label{eqa.1}
  a_j u_1 + b_j u_2 = 0,\text{ on }\Gamma_j,
\end{equation}
where $a_j,b_j$ are real constants such that $a_j^2+b_j^2 \neq 0$. Then the Lemmas 4.3.1.1-4.3.1.3 in \cite{Gri85} imply:
\begin{lemma}\label{lema.1}
The identity
$$\int_\Omega u_{2x}u_{1y} \text{d}x\text{d}y = \int_\Omega u_{1x}u_{2y} \text{d}x\text{d}y$$
holds for all $u=(u_1,u_2)^t\in H^1(\Omega)^2$ satisfying \eqref{eqa.1}.
\end{lemma}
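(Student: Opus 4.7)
The integrand $u_{2x}u_{1y}-u_{1x}u_{2y}$ is the Jacobian determinant $\det\nabla u$, a classical null Lagrangian that admits a representation as a total divergence. My plan is to prove the identity first for smooth $u$ satisfying \eqref{eqa.1} by reducing the integral to a sum of boundary integrals that vanish pointwise on each edge, and then to extend to all of $H^1(\Omega)^2$ by a density argument. The corner structure of the rectangle is the main technical issue, and it is exactly what Lemmas 4.3.1.1--4.3.1.3 of \cite{Gri85} are designed to handle.

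For smooth $u$ I would use the symmetric pointwise identity
\[
u_{2x}u_{1y}-u_{1x}u_{2y}=\tfrac{1}{2}\bigl[\partial_x(u_2u_{1y}-u_1u_{2y})-\partial_y(u_2u_{1x}-u_1u_{2x})\bigr],
\]
which is verified by a direct expansion in which the mixed second derivatives cancel. Applying the divergence theorem on $\Omega$ reduces $\int_\Omega(u_{2x}u_{1y}-u_{1x}u_{2y})\,\text{d}x\text{d}y$ to four line integrals, one on each edge $\Gamma_j$, whose integrand is $\pm(u_1\partial_\tau u_2-u_2\partial_\tau u_1)$, with $\partial_\tau$ the tangential derivative along $\Gamma_j$. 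Differentiating the boundary condition $a_ju_1+b_ju_2=0$ tangentially gives the companion relation $a_j\partial_\tau u_1+b_j\partial_\tau u_2=0$. Since $(a_j,b_j)\neq(0,0)$, the vectors $(u_1,u_2)$ and $(\partial_\tau u_1,\partial_\tau u_2)$ are parallel on $\Gamma_j$, whence $u_1\partial_\tau u_2-u_2\partial_\tau u_1\equiv 0$ pointwise on $\Gamma_j$. Hence each edge contribution is zero and the identity holds for smooth $u$ satisfying \eqref{eqa.1}.

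To extend to $H^1(\Omega)^2$ I would observe that the bilinear map $(u,v)\mapsto\int_\Omega(u_{2x}v_{1y}-u_{1x}v_{2y})\,\text{d}x\text{d}y$ is continuous on $H^1(\Omega)^2\times H^1(\Omega)^2$ by Cauchy--Schwarz, so it suffices to approximate any $u\in H^1(\Omega)^2$ satisfying \eqref{eqa.1} by smooth functions that also satisfy \eqref{eqa.1}. This is the delicate step and the expected main obstacle: at each corner of $\Omega$ two linear constraints (from the two adjacent edges) must be imposed simultaneously, and depending on whether they are independent, proportional, or simply compatible, a naive mollification need not preserve the boundary data. The fix is Grisvard's local corner analysis, which splits a neighborhood of each vertex according to the compatibility of the two adjacent constraints and approximates modulo an explicit set of corner singularities; this is exactly the content of Lemmas 4.3.1.1--4.3.1.3 in \cite{Gri85}. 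Once this approximation is available, passing to the limit in the identity established for smooth functions completes the proof.
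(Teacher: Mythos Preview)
Your argument is correct and is essentially what lies behind the paper's bare citation of Grisvard: the null-Lagrangian identity reduces the integral to edge contributions of the form $u_1\partial_\tau u_2-u_2\partial_\tau u_1$, and the linear constraint $a_ju_1+b_ju_2=0$ on each $\Gamma_j$ forces $(u_1,u_2)$ and $(\partial_\tau u_1,\partial_\tau u_2)$ to be collinear there, so every edge term vanishes. One small point of attribution: Grisvard's Lemmas~4.3.1.1--4.3.1.3 are not density results for the constrained space but rather prove the boundary-integral representation of $\int_\Omega(p_xq_y-p_yq_x)\,\mathrm d x\,\mathrm d y$ directly for $p,q\in H^1(\Omega)$ on a polygon (with the smoothing argument absorbed into the proof) and then show the edge terms vanish under the relevant boundary conditions; so the paper is invoking those lemmas for the whole identity, not just the approximation step. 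This is only a packaging difference---your smooth computation plus an $H^1$-density argument yields the same conclusion.
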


Furthermore, we suppose that 
\begin{equation}\label{eqa.2}
 \text{ the $4\times 2$ matrix } ((a_j,b_j))_{ j\in\aiminset{W,E,S,N} }\text{ has full rank $2$.}
\end{equation}
The assumption \eqref{eqa.2} excludes the case where the functions on the four sides of $\partial\Omega$ are the same and thus guarantees the uniqueness of the solution in Theorem \ref{thma.3} below.
The main existence result is the following, which is an extension of \cite[Proposition 4.1]{HT12} to the non-constant coefficients case.
\begin{thm}\label{thma.1}
We assume that $\alpha_1,\alpha_2,\beta_1,\beta_2$ are $\mathcal C^{1,\gamma}(\overline\Omega)$-functions for some $0<\gamma<1$, and that
\[
\alpha_2\beta_1-\alpha_1\beta_2\geq c_0,
\]
for some constant $c_0>0$. Then for every given $\Psi=(\psi_1,\psi_2)^t\in \mathcal{C}_c^1(\Omega)^2$, there exists a unique solution $u\in H^1(\Omega)^2$ to the problem 
\begin{equation}\begin{cases}\label{eqa.3}
T_1 u_x+T_2 u_y=\Psi,\\
u_1 = 0,\text{ on }\Gamma_W\cup\Gamma_S=\aiminset{x=0}\cup\aiminset{y=0},\\
u_2 = 0,\text{ on }\Gamma_E\cup\Gamma_N=\aiminset{x=L_1}\cup\aiminset{y=L_2},
\end{cases}\end{equation}
where
\begin{equation*}
T_1=\begin{pmatrix}
\alpha_1 & \beta_1\\
\beta_1 & -\alpha_1
\end{pmatrix},\hspace{6pt}
T_2=\begin{pmatrix}
\alpha_2 & \beta_2\\
\beta_2 & -\alpha_2
\end{pmatrix}.
\end{equation*}
\end{thm}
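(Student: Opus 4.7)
The plan is to reduce the first-order elliptic system to the Cauchy--Riemann equations via a quasi-conformal change of variable, and then solve a classical mixed boundary value problem on the resulting curvilinear polygonal domain. Setting $w = u_1 - i u_2$, $\psi = \psi_1 + i \psi_2$, $a = \alpha_1 + i\beta_1$ and $b = \alpha_2 + i\beta_2$, a direct calculation rewrites $T_1 u_x + T_2 u_y = \Psi$ as the complex first-order equation $a w_x + b w_y = \psi$, equivalently $(a+ib)\partial_z w + (a-ib)\partial_{\bar z} w = \psi$ in Wirtinger notation. The identity $|a+ib|^2 - |a-ib|^2 = 4(\alpha_2\beta_1 - \alpha_1 \beta_2) \geq 4c_0 > 0$ shows this is a uniformly elliptic Beltrami-type equation $\partial_z w + \mu(x,y)\, \partial_{\bar z} w = \tilde \psi$ with $\|\mu\|_\infty \leq k < 1$ and $\mu \in \mathcal C^{1,\gamma}(\overline\Omega)$.

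The next step is to invoke Theorem~\ref{thma.2}, whose proof combines the Ahlfors--Bers--Vekua existence theory for the Beltrami equation with the Hölder regularity of $\mu$, to produce a homeomorphism $\Phi : \overline\Omega \to \overline{\tilde\Omega}$ of class $\mathcal C^{1,\gamma}$ that straightens the Beltrami equation into the inhomogeneous Cauchy--Riemann system $\partial_{\bar z'} w' = g$ on a curvilinear polygonal domain $\tilde\Omega$, where $z' = \Phi(z)$ and $w'(z') = w(\Phi^{-1}(z'))$. The four sides $\Gamma_W, \Gamma_E, \Gamma_S, \Gamma_N$ map to four piecewise $\mathcal C^{1,\gamma}$ arcs of $\partial\tilde\Omega$, and the Dirichlet conditions on $u_1, u_2$ transfer pointwise to analogous Dirichlet conditions on $u_1', u_2'$ along the corresponding pairs of arcs.

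I then solve the transformed mixed boundary value problem. Writing $w' = u_1' - i u_2'$, both $u_1'$ and $u_2'$ satisfy an inhomogeneous Poisson equation, obtained by applying $\partial_{z'}$ to $\partial_{\bar z'} w' = g$ and taking real and imaginary parts. The Cauchy--Riemann relations along the boundary convert the Dirichlet condition $u_2' = 0$ on an arc into a Neumann condition $\partial_\nu u_1' = 0$ on that same arc: indeed $\partial_\tau u_2' = 0$ combined with CR gives $\partial_\nu u_1' = 0$. Hence $u_1'$ solves a mixed Dirichlet--Neumann problem for Poisson's equation on the curvilinear polygon $\tilde\Omega$; well-posedness in $H^1$ follows from the regularity results of \cite[Ch. 4]{Gri85} (used via Lemma~\ref{lema.1}). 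The function $u_2'$ is recovered from $u_1'$ and $g$ by integrating the Cauchy--Riemann relations, and the full-rank hypothesis \eqref{eqa.2} together with Lemma~\ref{lema.1} secures single-valuedness and uniqueness. Pulling back by $\Phi^{-1}$, whose $\mathcal C^{1,\gamma}$ regularity preserves $H^1$-regularity, yields the required $u \in H^1(\Omega)^2$ solving \eqref{eqa.3}.

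The main obstacle is the construction of the change of variable $\Phi$ with $\mathcal C^{1,\gamma}$ regularity \emph{up to the boundary and corners} of the rectangle; classical Beltrami theory yields only a globally Hölder-continuous homeomorphism, which would be insufficient to preserve $H^1$-regularity or to apply Grisvard's corner-domain results on the transformed domain. Obtaining this improved boundary regularity is precisely where the $\mathcal C^{1,\gamma}$ hypothesis on $\alpha_i, \beta_i$ is essential, and is the content of Theorem~\ref{thma.2}.
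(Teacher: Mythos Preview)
Your approach is essentially the paper's: reduce to a Beltrami-type system, straighten it to Cauchy--Riemann via a quasi-conformal change of variable obtained from the Ahlfors--Bers/Vekua theory (using the $\mathcal C^{1,\gamma}$ hypothesis for regularity up to $\overline\Omega$), convert the boundary conditions into a mixed Dirichlet--Neumann problem for the Laplacian on a curvilinear polygon, and invoke Grisvard. Your complex rewriting $aw_x+bw_y=\psi$ and the ellipticity computation $|a+ib|^2-|a-ib|^2=4(\alpha_2\beta_1-\alpha_1\beta_2)$ are correct and pleasant.

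Two points of comparison are worth noting. First, the paper's own proof of this particular theorem is a one-line reduction: multiply \eqref{eqa.3} on the left by $T_1^{-1}$, observe that $T_1^{-1}T_2=\begin{pmatrix}\mu_1&-\mu_2\\ \mu_2&\mu_1\end{pmatrix}$ with $\mu_2=(\alpha_2\beta_1-\alpha_1\beta_2)/(\alpha_1^2+\beta_1^2)\geq c_0/(\alpha_1^2+\beta_1^2)>0$, and then apply Theorem~\ref{thma.2} directly. What you have written is really a sketch of the proof of Theorem~\ref{thma.2}, not a use of its statement; be aware that Theorem~\ref{thma.2} is itself a solvability result for the normalized system, not a theorem that ``produces a homeomorphism $\Phi$'' --- the quasi-conformal map is constructed inside its proof. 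Second, in the paper $u_1'$ and $u_2'$ are each obtained from a separate mixed Dirichlet--Neumann Poisson problem (via \cite[Lemma 4.4.3.1]{Gri85}), rather than recovering $u_2'$ by integrating the Cauchy--Riemann relations as you propose; both routes work on a simply connected image domain, and uniqueness here follows already from Grisvard's lemma for these specific boundary conditions, without needing the full-rank hypothesis \eqref{eqa.2} or Lemma~\ref{lema.1} (those are used only for the more general Theorem~\ref{thma.3}).
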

\begin{proof}
We multiply by $T_1^{-1}$ on both sides of the equations in \eqref{eqa.3} and notice that
$\displaystyle
T_1^{-1}T_2 = \begin{pmatrix}
\mu_1&-\mu_2\\
\mu_2&\mu_1
\end{pmatrix}
$ with
$\displaystyle
\mu_1=\frac{\alpha_1\alpha_2+\beta_1\beta_2}{\alpha_1^2 + \beta_1^2},\,
\mu_2=\frac{\alpha_2\beta_1-\alpha_1\beta_2}{\alpha_1^2 + \beta_1^2}.
$
Since by assumption $\mu_2$ is positive away from zero, applying Theorem~\ref{thma.2} below yields the unique solution $u$ of \eqref{eqa.3}.
\end{proof}
\begin{rmk}\label{rmka.1}
Theorem \ref{thma.1} is still true if we choose any other boundary $\Gamma$ except $\emptyset$ and $\partial\Omega$, and is also true if we assume \eqref{eqa.2} and choose the boundary conditions \eqref{eqa.1}. 
The proof is exactly the same if we utilize Remark \ref{rmka.2} and Theorem \ref{thma.3} below.
\end{rmk}

\begin{thm}\label{thma.2}
Assume that $\mu_1,\mu_2$ are $\mathcal C^{1,\gamma}(\overline\Omega)$ functions for some $0<\gamma<1$, 
and that $\mu_2\geq c_0$
for some constant $c_0>0$. Then for every given $\Psi=(\psi_1,\psi_2)^t\in \mathcal{C}_c^1(\Omega)^2$, there exists a unique solution $u\in H^1(\Omega)^2$ to the problem 
\begin{equation}\begin{cases}\label{eqa.5}
  \begin{pmatrix}
    u_1\\
    u_2
  \end{pmatrix}_x + \begin{pmatrix}
\mu_1&-\mu_2\\
\mu_2&\mu_1
\end{pmatrix}\begin{pmatrix}
    u_1\\
    u_2
  \end{pmatrix}_y = \Psi,\\
  u_1 = 0,\text{ on }\Gamma=\Gamma_W\cup\Gamma_S,\\
  u_2 = 0,\text{ on }\Gamma^c=\Gamma_E\cup\Gamma_N,
\end{cases}\end{equation}
where $\Gamma^c$ is the complement of $\Gamma$ with respect to the boundary $\partial\Omega$.
\end{thm}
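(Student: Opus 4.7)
The plan is to reduce the variable-coefficient system \eqref{eqa.5} to an inhomogeneous Cauchy-Riemann equation on a curvilinear quadrilateral by means of a quasi-conformal change of variables, and then invoke Theorem \ref{thma.3}, which extends \cite[Proposition 4.1]{HT12} to such domains.

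First I would complexify. Setting $w = u_1 + iu_2$, $g = \psi_1 + i\psi_2$ and $\mu = \mu_1 + i\mu_2$, a row-by-row computation turns \eqref{eqa.5} into the scalar complex equation
\[
w_x + \mu\, w_y = g \quad \text{in } \Omega.
\]
Rewriting via Wirtinger derivatives yields $(1+i\mu)\,\partial_z w + (1-i\mu)\,\partial_{\bar z} w = g$, and since $\mathrm{Im}\,\mu = \mu_2 \geq c_0 > 0$ one has $|1+i\mu| < |1-i\mu|$ uniformly on $\overline\Omega$, so after division by $(1-i\mu)$ the operator is of Beltrami type with Beltrami coefficient $q = -(1+i\mu)/(1-i\mu)$ satisfying $\|q\|_\infty < 1$.

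The crucial step is to straighten this operator into the Cauchy-Riemann operator by a change of variables. By the classical Ahlfors--Bers--Vekua theory for the Beltrami equation, combined with boundary regularity results that exploit the hypothesis $\mu_1,\mu_2 \in \mathcal{C}^{1,\gamma}(\overline\Omega)$, one constructs a $\mathcal{C}^{1,\gamma}$ quasi-conformal homeomorphism $\zeta = \xi + i\eta : \overline\Omega \to \overline{\Omega^{*}}$ solving the homogeneous equation $\zeta_x + \mu\zeta_y = 0$, whose image $\Omega^{*}$ is a curvilinear quadrilateral whose four corners are the images of those of $\Omega$. With $W(\zeta) := w(z)$, the chain rule together with $\zeta_x + \mu\zeta_y = 0$ gives
\[
w_x + \mu\, w_y = (\xi_x+\mu\xi_y)W_\xi + (\eta_x+\mu\eta_y)W_\eta = -2i\,b\,\partial_{\bar\zeta} W,
\]
where $b := \eta_x + \mu\eta_y$ is nowhere zero (otherwise one would have $\eta_x = \eta_y = 0$ because $\mu_2 > 0$, contradicting the fact that $\zeta$ is a homeomorphism). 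The original problem thus reduces to the inhomogeneous Cauchy--Riemann equation $\partial_{\bar\zeta} W = g/(-2ib)$ on $\Omega^{*}$.

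Finally, the mixed Dirichlet data transform covariantly: $u_1 = 0$ on $\Gamma_W\cup\Gamma_S$ and $u_2 = 0$ on $\Gamma_E\cup\Gamma_N$ become $\mathrm{Re}\,W = 0$ and $\mathrm{Im}\,W = 0$ on the corresponding arcs of $\partial\Omega^{*}$, which are conditions of the type \eqref{eqa.1} satisfying the full-rank hypothesis \eqref{eqa.2}. Applying Theorem \ref{thma.3} on $\Omega^{*}$ provides a unique $W \in H^1(\Omega^{*})^2$, and pulling back by $\zeta^{-1}$ yields the unique $u \in H^1(\Omega)^2$, since the $\mathcal{C}^{1,\gamma}$ regularity of $\zeta^{\pm 1}$ preserves the $H^1$ class. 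I expect the principal obstacle to be the boundary/corner regularity of $\zeta$: interior existence and Hölder regularity for Beltrami equations are classical, but one must verify that the quasi-conformal homeomorphism is $\mathcal{C}^{1,\gamma}$ up to $\partial\Omega$ and that it maps the four corners of $\Omega$ to four distinct corners of $\Omega^{*}$, so that the transformed problem still falls under the hypotheses of Theorem \ref{thma.3}; this is precisely where the $\mathcal{C}^{1,\gamma}$ assumption on $\mu_1,\mu_2$ is consumed.
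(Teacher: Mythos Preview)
Your reduction via the Beltrami equation and the quasi-conformal change of coordinates is exactly the route the paper takes: one solves the homogeneous equation $\zeta_x+\mu\zeta_y=0$ globally in $\mathcal C^{2,\gamma}$ (after extending the Beltrami coefficient to all of $\mathbb C$), uses $\zeta=(\varphi_1,\varphi_2)$ as new coordinates, and lands on an inhomogeneous Cauchy--Riemann system on the curvilinear quadrilateral $\Omega'$ with the same Dirichlet data $u_1'=0$ on $\Gamma'$, $u_2'=0$ on $\Gamma'^c$. Up to this point your argument and the paper's coincide.

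The gap is in your final step. You propose to finish by invoking Theorem~\ref{thma.3} on $\Omega^{*}$, but this is circular and also misapplied. Theorem~\ref{thma.3} is stated for the \emph{rectangle} $\Omega$ with variable coefficients $\mu_1,\mu_2$ and general scalar boundary conditions \eqref{eqa.1}; it does not extend \cite[Proposition~4.1]{HT12} to curvilinear domains as you write. Moreover, in the paper's logical order Theorem~\ref{thma.3} is a generalization of Theorem~\ref{thma.2} proved by the \emph{same} quasi-conformal transformation, so using it to establish Theorem~\ref{thma.2} begs the question. What you actually need at this stage is a solvability result for the Cauchy--Riemann system on a curvilinear polygon, and no such statement is available to cite.

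The paper closes this gap differently: after reaching the Cauchy--Riemann system \eqref{eqa.13} on $\Omega'$, it differentiates to obtain the Laplace equation $\Delta' u_1'=f_{1x'}+f_{2y'}$, reads off from the first-order system that the Dirichlet condition $u_2'=0$ on $\Gamma'^c$ translates into a homogeneous Neumann condition $\partial u_1'/\partial\nu'=0$ there, and then applies Grisvard's mixed Dirichlet--Neumann result \cite[Lemma~4.4.3.1]{Gri85}, which is valid on curvilinear polygons. The same is done for $u_2'$. This second-order detour through Grisvard is the missing ingredient in your proposal.
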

\begin{rmk}\label{rmka.2}
Theorem \ref{thma.2} is still true if we choose any other boundary $\Gamma$ except $\emptyset$ and $\partial\Omega$. The proof is exactly the same.
\end{rmk}
The generalization of Theorem \ref{thma.2} is the following:
\begin{thm}\label{thma.3}
Assume that $\mu_1,\mu_2$ satisfy the assumptions in Theorem \ref{thma.2} and \eqref{eqa.2} holds.
Then for every given $\Psi=(\psi_1,\psi_2)^t\in \mathcal{C}_c^1(\Omega)^2$, there exists a unique solution $u\in H^1(\Omega)^2$ to the problem 
\begin{equation}\begin{cases}\label{eqa.6}
  \begin{pmatrix}
    u_1\\
    u_2
  \end{pmatrix}_x + \begin{pmatrix}
\mu_1&-\mu_2\\
\mu_2&\mu_1
\end{pmatrix}\begin{pmatrix}
    u_1\\
    u_2
  \end{pmatrix}_y = \Psi,\\
  u\text{ satisfies the boundary condition \eqref{eqa.1}.}
\end{cases}\end{equation}
\end{thm}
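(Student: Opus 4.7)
The plan is to mirror the proof of Theorem \ref{thma.2} while accommodating the more general boundary conditions \eqref{eqa.1} through Lemma \ref{lema.1}; the rank assumption \eqref{eqa.2} then supplies uniqueness. Write $Lu := u_x + J u_y$ with $J = \begin{pmatrix} \mu_1 & -\mu_2 \\ \mu_2 & \mu_1 \end{pmatrix}$. Since $\det(\xi_1 I + \xi_2 J) = (\xi_1 + \mu_1 \xi_2)^2 + \mu_2^2 \xi_2^2$ and $\mu_2 \geq c_0 > 0$, the operator $L$ is uniformly elliptic of first order. Moreover the quasi-conformal change of variables used in the proof of Theorem \ref{thma.2} reduces $L$ to the classical Cauchy--Riemann system $\widetilde u_x + J_0 \widetilde u_y = \widetilde \Psi$, with $J_0 = \begin{pmatrix} 0 & -1 \\ 1 & 0 \end{pmatrix}$, on a curvilinear quadrilateral $\widetilde \Omega$; the boundary conditions \eqref{eqa.1} pull back to analogous linear conditions on the four transformed sides, still of rank $2$ thanks to \eqref{eqa.2}.

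The first substantive step is a G\aa rding-type a priori estimate
\[
\aiminnorm{\nabla u}_{L^2(\Omega)^2} \leq C \bigl( \aiminnorm{Lu}_{L^2(\Omega)^2} + \aiminnorm{u}_{L^2(\Omega)^2} \bigr), \qquad u \in H^1(\Omega)^2 \text{ satisfying } \eqref{eqa.1},
\]
obtained by expanding $\aiminnorm{Lu}_{L^2}^2$ into squares of partial derivatives plus cross terms. The decisive cross term is $2\int_\Omega \mu_2 (u_{2x} u_{1y} - u_{1x} u_{2y}) \, \mathrm{d}x \mathrm{d}y$: for constant $\mu_2$ it vanishes outright by Lemma \ref{lema.1} (which is essentially a Stokes identity applied under \eqref{eqa.1}), and for variable $\mu_2$ an extra integration by parts redistributes the coefficient and leaves only a remainder controlled by $C \aiminnorm{u}_{L^2} \aiminnorm{\nabla u}_{L^2}$, absorbed via Young's inequality. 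The companion cross term involving $\mu_1$ is handled by direct integration by parts, producing only lower-order interior remainders together with boundary contributions manageable under \eqref{eqa.1}.

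For existence I would invoke the Fredholm alternative: the operator $L : V \to L^2(\Omega)^2$, with $V = \{u \in H^1(\Omega)^2 : u \text{ satisfies } \eqref{eqa.1}\}$, is Fredholm of index zero because $L$ is first-order elliptic and the boundary conditions \eqref{eqa.1} satisfy the Shapiro--Lopatinskii covering condition on each open side (which reduces, given the ellipticity of $L$, to the non-degeneracy $(a_j, b_j) \ne 0$). Corner compatibility is handled exactly as in the proof of Theorem \ref{thma.2}, via Grisvard's framework for polygonal domains. Equivalently, a continuity argument can be used, deforming \eqref{eqa.1} through a one-parameter family of rank-$2$ boundary conditions to those of Theorem \ref{thma.2} while preserving the uniform G\aa rding estimate.

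For uniqueness, suppose $u \in H^1(\Omega)^2$ satisfies $Lu = 0$ and \eqref{eqa.1}. After the quasi-conformal reduction, the complex combination $\widetilde f = \widetilde u_1 - i \widetilde u_2$ is holomorphic on $\widetilde \Omega$, and the four boundary conditions become $\Re(c_j \widetilde f) = 0$ on the transformed side $\widetilde \Gamma_j$, where $c_j = a_j + i b_j$. The rank assumption \eqref{eqa.2} is precisely the statement that the four vectors $(a_j, b_j) \in \mathbb R^2$ span $\mathbb R^2$, equivalently that no common real line contains all four $c_j$; a Schwarz-reflection argument across two non-parallel transformed sides then extends $\widetilde f$ holomorphically and forces $\widetilde f \equiv 0$, hence $u = 0$. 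The main obstacle is the careful variable-coefficient book-keeping in the G\aa rding estimate --- ensuring that every commutator and boundary remainder generated when moving $\mu_1, \mu_2$ across derivatives is genuinely lower-order --- together with verifying that the Schwarz-reflected continuation of $\widetilde f$ respects the corner geometry of $\widetilde \Omega$, which is exactly the elliptic-corner issue dealt with by Grisvard and already invoked in the proof of Theorem \ref{thma.2}.
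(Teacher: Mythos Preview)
Your overall architecture --- reduce to Cauchy--Riemann by the quasi-conformal change of variables, use Lemma~\ref{lema.1} to kill the Jacobian cross term, and invoke Grisvard-style elliptic theory at the corners --- matches the paper's, but the execution diverges in two places where the paper is both simpler and more complete.

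\textbf{Uniqueness.} The paper does not pass through Schwarz reflection (which is delicate on the curvilinear quadrilateral $\widetilde\Omega$); it never even needs the change of variables for uniqueness. Instead it premultiplies $Lu$ by the matrix $\begin{pmatrix}0&1/\sqrt{\mu_2}\\ 1/\sqrt{\mu_2}&0\end{pmatrix}$ so that the resulting operator $Tu$ has \emph{symmetric} matrix coefficients. Expanding $\aiminnorm{Tu}_{L^2}^2$ then gives a positive-definite quadratic form in $\nabla u$ plus the cross term $2\int_\Omega(u_{2x}u_{1y}-u_{1x}u_{2y})\,\text{d}x\text{d}y$ with \emph{constant} coefficient, so Lemma~\ref{lema.1} kills it exactly --- no integration by parts in $\mu_2$, no boundary or corner remainders. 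Hence $Tu=0\Rightarrow\nabla u=0\Rightarrow u$ constant, and the rank condition \eqref{eqa.2} forces $u=0$. Your route leaves the coefficient $\mu_2$ on the cross term; the integration by parts you describe generates a boundary integral $\int_{\partial\Omega}\mu_2(u_2u_{1y}\nu_x-u_2u_{1x}\nu_y)$ which, after using \eqref{eqa.1} and one more integration along each side, produces corner point evaluations of $u_i^2$ and edge integrals of $|u|^2$ that are not lower order in $H^1$. This is the gap in your G\aa rding estimate.

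\textbf{Existence.} Rather than appealing to an abstract index-zero Fredholm claim (whose justification in a corner domain would itself require exactly the Grisvard machinery you defer to), the paper carries the quasi-conformal reduction through to the scalar equation $\Delta' u_1' = f_{1x'}+f_{2y'}$ and reads off from \eqref{eqa.1} the transformed boundary condition on each side $\Gamma_j'$: Dirichlet $u_1'=0$ if $b_j=0$, and the oblique-derivative condition $\partial u_1'/\partial\nu_j' + (a_j/b_j)\,\partial u_1'/\partial\tau_j'=0$ if $b_j\neq 0$. Existence of $u_1'\in H^1(\Omega')$ (possibly non-unique at the scalar level) then follows from Grisvard's Lemma~4.4.4.2, using $\int_{\Omega'}(f_{1x'}+f_{2y'})=0$ when all $b_j\neq0$; similarly for $u_2'$. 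Uniqueness for the system is already secured by the previous step. This is more direct than your continuity/Fredholm argument and avoids having to verify Shapiro--Lopatinskii and corner index computations separately.
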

We are now going to prove Theorems \ref{thma.2} and \ref{thma.3}. The idea of the proof is to use a new coordinate system (see \eqref{eqa.9}) to transform the equations in \eqref{eqa.5} and \eqref{eqa.6} to the non-homogeneous Cauchy-Riemann equations (see \eqref{eqa.13}), which furthermore can be reduced to a Laplacian equation (see \eqref{eqa.14}) with mixed (Dirichlet-Neumann) boundary conditions. Therefore, we can apply the results on elliptic problems in convex polygonal domain from \cite{Gri85} to obtain the solution $u\in H^1(\Omega)^2$ for \eqref{eqa.5} and \eqref{eqa.6}.
\begin{proof}[Proof of Theorem \ref{thma.2}]

We note that the equations in \eqref{eqa.5} are the non-homogeneous Beltrami equations. Hence, in order to show the existence of $u$ for \eqref{eqa.5}, we first study the homogeneous Beltrami equations, i.e.
\begin{equation}\begin{cases}\label{eqa.7}
\varphi_{1x} + \mu_1\varphi_{1y} - \mu_2 \varphi_{2y} =0,\\
\varphi_{2x} + \mu_1\varphi_{2y} + \mu_2\varphi_{1y}=0.
\end{cases}\end{equation}
In terms of the complex differential operators
\[
\partial_{\bar z}=(\partial_x + i\partial_y)/2,\quad \partial_{ z}=(\partial_x - i\partial_y)/2,
\]
and using the notation $w:=\varphi_1 + i\varphi_2$,  we can write \eqref{eqa.7} as 
\begin{equation}\label{eqa.8}
  w_{\bar z} = \frac{\mu_2 -1- i\mu_1}{\mu_2+1 - i\mu_1}w_z.
\end{equation}
This is the complex form of the Beltrami equations. We set $q_0 =\frac{\mu_2 -1- i\mu_1}{\mu_2+1 - i\mu_1}$, and since $\mu_2$ is positive away from zero in $\Omega$, we find that $\aiminabs{q_0}\leq \tau_0<1$ in $\Omega$ for some constant $\tau_0$. Since $\mu_2,\mu_1$ and hence $q_0$ are in $\mathcal C^{1,\gamma}(\overline\Omega)$, we can extend $q_0$ to $\mathbb C$ so that it is in $\mathcal C^{1,\gamma}(\mathbb C)$ and vanishes outside of some sufficiently large ball.
With this and the fact that $\aiminabs{q_0}\leq \tilde\tau_0<1$ for $\tau_0\leq\tilde\tau_0<1$, it can be shown that the Beltrami system \eqref{eqa.8} admits a solution $w\in \mathcal C^{2,\gamma}(\mathbb C)$ (see e.g. \cite{Sha45} or \cite[Chapter 2]{Vek62}, \cite[Chapter 4]{Hub06}). Furthermore, $w$ is a quasi-conformal mapping, i.e. $w$ preserves the orientation of the boundary of any bounded domain enclosed by a finite number of piecewise $\mathcal C^1$ curves. With the solutions $(\varphi_1,\varphi_2)$ for \eqref{eqa.7} at hand, we now introduce the new coordinate system $(x',y')$ such that
\begin{equation}\label{eqa.9}
  \begin{pmatrix}
    x'\\
    y'
  \end{pmatrix}=\begin{pmatrix}
    \varphi_1(x,y)\\
    \varphi_2(x,y)
  \end{pmatrix}.
\end{equation}
The transformation \eqref{eqa.9} is a valid coordinate transformation since by \eqref{eqa.7} the Jacobian matrix
\[
\frac{\partial(x',y')}{\partial(x,y)} = \begin{pmatrix}
  \varphi_{1x}&\varphi_{1y}\\
  \varphi_{2x}&\varphi_{2y}
\end{pmatrix} =\begin{pmatrix}
  \mu_2\varphi_{2y}-\mu_1\varphi_{1y} &\varphi_{1y}\\
  -\mu_2\varphi_{1y}-\mu_1\varphi_{2y}&\varphi_{2y}
\end{pmatrix}
\]
is non-singular, its determinant being equal to $\mu_2(\varphi_{1y}^2+\varphi_{2y}^2)$ and $\mu_2>0$. 

We denote by $\Gamma_j'$ the image of $\Gamma_j$ by this transformation for all $j\in\{W,E,S,N\}$, and denote by $\Omega', \Gamma',\theta',\Psi'$ and the gradient $\nabla'$ the transforms of $\Omega,\Gamma,\theta,\Psi$ and the gradient $\nabla$ respectively.
Now, direct computation gives
\begin{equation}\label{eqa.10}
\nabla u =\begin{pmatrix}
  \varphi_{1x}&\varphi_{2x}\\
  \varphi_{1y}&\varphi_{2y}
\end{pmatrix}\nabla' u'.
\end{equation}
In the new coordinate system $(x',y')$, the boundary conditions in \eqref{eqa.5} read
\begin{equation}\label{eqa.5p}
\begin{cases}
u_1' = 0 \text{ on } \Gamma', \\
u_2' = 0 \text{ on } \Gamma'^c,
\end{cases}
\end{equation}
where $\Gamma'=\Gamma_W'\cup\Gamma_S'$.
In the new coordinate system $(x',y')$, we rewrite the equation in \eqref{eqa.5} as
\begin{equation}\label{eqa.12}
  \begin{pmatrix}
   \mu_2\varphi_{2y} (u_{1x'}' - u_{2y'}') - \mu_2\varphi_{1y}(u_{2x'}' + u_{1y'}') \\
    \mu_2\varphi_{1y} (u_{1x'}' - u_{2y'}') + \mu_2\varphi_{2y}(u_{2x'}' + u_{1y'}') 
  \end{pmatrix} = \Psi',
\end{equation}
which yields
\begin{equation}\begin{cases}\label{eqa.13}
  u_{1x'}' - u_{2y'}' = f_1,\\
  u_{2x'}' + u_{1y'}' = f_2,
\end{cases}\end{equation}
for some functions $f_1,f_2$ which belong to $\mathcal C_c^1(\Omega')$ since $\Psi$ is given in $\mathcal C_c^1(\Omega)$.

Differentiating $\eqref{eqa.13}_1$ with respect to $x'$ and $\eqref{eqa.13}_2$ with respect to $y'$, and adding these two equations, we find the elliptic equation
\begin{equation}\label{eqa.14}
\Delta'u_1' = f_{1x'} + f_{2y'},
\end{equation}
where $\Delta'$ denotes the Laplace operator in the new coordinate system $(x',y')$. We associate with equation \eqref{eqa.14} the boundary conditions
\begin{equation}\label{eqa.15}
 u_1'=0 \text{ on } \Gamma',
 \end{equation}
which is already contained in \eqref{eqa.5p}$_1$. For the boundary $\Gamma'^c$, suitable boundary conditions can be obtained as follows. We first denote by $\nu_j'$ the unit normal vector to $\Gamma_j'$, and $\tau_j'$ the unit tangent vector on $\Gamma_j'$, for all $j\in\{W,E,S,N\}$. 
On $\Gamma_E=\aiminset{x=L_1}$, we first deduce from \eqref{eqa.5}$_1$ that $u_{1x}+\mu_1 u_{1y}=0$ since $u_2=0$ and $\Psi\in\mathcal C_c^1(\Omega)$, which implies that
\begin{equation}\label{eqa.16}
u_{1x'}'(\varphi_{1x}+\mu_1\varphi_{1y}) + u_{1y'}'(\varphi_{2x}+\mu_1\varphi_{2y}) = 0,\text{ on }\Gamma_E'.
\end{equation}
 Now, since $\Gamma_E'=\{(x',y')\,|\,(x',y')=(\varphi_1(L_1,y),\varphi_2(L_1,y)\}$, its tangent vector is parallel to $(\varphi_{1y},\varphi_{2y})$ or $(-\varphi_{2x}-\mu_1\varphi_{2y},\varphi_{1x} + \mu_1\varphi_{1y})$ (see \eqref{eqa.7}),
and its normal vector is parallel to $(\varphi_{1x} + \mu_1\varphi_{1y},\varphi_{2x} + \mu_1\varphi_{2y})$. Noticing \eqref{eqa.16}, 
 we then associate to \eqref{eqa.14} the following boundary condition on $\Gamma_E'$:
\begin{equation}\label{eqa.18}
\frac{\partial u_1'}{\partial\nu_E'}  = 0, \text{ on } \Gamma_E'.
\end{equation}
On $\Gamma_N'=\aiminset{ (x',y')\,|\,(x',y')=(\varphi_1(x,L_2),\varphi_2(x,L_2) }$, 
similar computations show that 
we need to associate to \eqref{eqa.14} the following boundary condition on $\Gamma_N'$:
\begin{equation}\label{eqa.19}
\frac{\partial u_1'}{\partial\nu_N'} = 0, \text{ on } \Gamma_N'.
\end{equation}

The existence and uniqueness of a solution $u_1'\in H^1(\Omega')$ of \eqref{eqa.14}-\eqref{eqa.15} and \eqref{eqa.18}-\eqref{eqa.19} follows from Lemma 4.4.3.1 of \cite{Gri85} with $\Omega=\Omega'$, $\mathscr{D} = \Gamma'$, $\mathscr{N} = \Gamma'^c$, $\beta_j=0$, and $f =  f_{1x'} + f_{2y'}$. 
We remark that a close look at the proof of Lemma 4.4.3.1 in \cite{Gri85} shows that it is still valid when the boundary of the domain is made of piecewise $\mathcal C^1$ curves instead of segments.

Following the arguments for $u_1'$, we find the following equation and boundary conditions for $u_2'$:
\begin{equation}\label{eqa.20}
\begin{cases}
\Delta'u_2' = -f_{1y'} + f_{2x'}, \\
u_2' = 0, \text{ on } \Gamma'^c=\Gamma_E'\cup\Gamma_N', \\
\frac{\partial u_2'}{\partial\nu_W'}  = 0,\text{ on } \Gamma_W', \\
\frac{\partial u_2'}{\partial\nu_S'} = 0,\text{ on } \Gamma_S'.
\end{cases}
\end{equation}
We thus also have a unique solution $u_2'\in H^1(\Omega')$ of \eqref{eqa.20} thanks to Lemma 4.4.3.1 of \cite{Gri85} again. 

In conclusion, in the new coordinate system $(x',y')$, we find a unique solution $ u'\in H^1(\Omega)^2$ which solves the problem \eqref{eqa.13} and \eqref{eqa.5p}. Transforming back to the original coordinate system $(x,y)$, we obtain $ u\in V$ satisfying \eqref{eqa.5}. Hence, the proof of Theorem~\ref{thma.2} is now complete.
\end{proof}

\begin{proof}[Proof of Theorem \ref{thma.3}]
We first prove the uniqueness. Noticing that $\mu_2$ is positive away from zero, we set
\begin{equation}\begin{split}
  Tu&=\begin{pmatrix}
    0&\frac{1}{\sqrt{\mu_2}}\\
    \frac{1}{\sqrt{\mu_2}}&0
  \end{pmatrix}\bigg(  \begin{pmatrix}
    u_1\\
    u_2
  \end{pmatrix}_x + \begin{pmatrix}
\mu_1&-\mu_2\\
\mu_2&\mu_1
\end{pmatrix}\begin{pmatrix}
    u_1\\
    u_2
  \end{pmatrix}_y\bigg)\\
  &=\begin{pmatrix}
    0&\frac{1}{\sqrt{\mu_2}}\\
    \frac{1}{\sqrt{\mu_2}}&0
  \end{pmatrix}\begin{pmatrix}
    u_1\\
    u_2
  \end{pmatrix}_x +\begin{pmatrix}
\sqrt{\mu_2}&\frac{\mu_1}{\sqrt{\mu_2}}\\
\frac{\mu_1}{\sqrt{\mu_2}}&-\sqrt{\mu_2}
\end{pmatrix}\begin{pmatrix}
    u_1\\
    u_2
  \end{pmatrix}_y,
\end{split}\end{equation}
and in order to prove the uniqueness, we only need to show that if $u\in H^1(\Omega)^2$ satisfies \eqref{eqa.1} and $Tu=0$, then $u=0$. Direct computations show that
\begin{equation}\begin{split}\label{eqa.22}
  \aiminnorm{Tu}_{L^2}^2 = &\int_\Omega \mu_2(u_{1y}^2+u_{2y}^2) + \frac{1}{\mu_2}\big( (u_{1x}+\mu_1u_{1y})^2 + (u_{2x}+\mu_1u_{2y})^2 \big)\,\text{d}x\text{d}y \\
  &+ \int_\Omega 2(u_{2x}u_{1y}-u_{1x}u_{2y})\,\text{d}x\text{d}y.
\end{split}\end{equation}
We first use Lemma \ref{lema.1} to dispense with the last term in the right-hand side of \eqref{eqa.22} since $u$ satisfies \eqref{eqa.1}, and then infer from $Tu=0$ and \eqref{eqa.22} that $\nabla u=0$ by noticing that $\mu_2$ is positive away from zero. Hence, $u$ is a constant function. The boundary conditions \eqref{eqa.1} and the assumption \eqref{eqa.2} impose that $u=0$ and the uniqueness follows.

We now prove the existence, and we use the same coordinate transformation \eqref{eqa.9} and the same notations introduced in the proof of Theorem \ref{thma.2}. The same arguments as in the proof of Theorem \ref{thma.2} will lead to the elliptic equation \eqref{eqa.14}, i.e.
\begin{equation}\label{eqa.23}
  \Delta'u_1' = f_{1x'} + f_{2y'}.
\end{equation}
Arguments similar to those used to find the boundary conditions for $u_1'$ in the proof of Theorem \ref{thma.2} lead to the suitable boundary conditions for \eqref{eqa.23}, for all $j\in\aiminset{W,E,S,N}$:
\begin{equation}\begin{split}\label{eqa.24}
  u_1' = 0, \,\text{ on }\Gamma_j, \text{ if }b_j=0,\\
  \frac{\partial u_1'}{\partial\nu_j'} + \frac{a_j}{b_j}\frac{\partial u_1'}{\partial\tau_j'} = 0, \,\text{ on }\Gamma_j, \text{ if }b_j\neq 0.\\
\end{split}\end{equation}
The existence of a (possibly non-unique) solution $u_1'\in H^1(\Omega')$ of \eqref{eqa.23}-\eqref{eqa.24} follows from Lemma 4.4.4.2 of \cite{Gri85} with $\Omega=\Omega'$ and $f=f_{1x'} + f_{2y'}$. We note that 
\begin{equation}\label{eqa.25}
\int_{\Omega'}(f_{1x'} + f_{2y'})\text{d}x'\text{d}y'=0,
\end{equation}
since $f_1,f_2$ vanish on the boundary $\partial\Omega'$. We need \eqref{eqa.25} to apply \cite[Lemma 4.4.4.2]{Gri85} in the case when $b_j\neq 0$ for all $j\in\aiminset{W,E,S,N}$.
We also remark that a close look at the proof of Lemma 4.4.4.2 in \cite{Gri85} shows that it is still valid when the boundary of the domain is made of piecewise $\mathcal C^1$ curves instead of segments.
The existence of a solution $u_2'\in H^1(\Omega')$ is similar.

In conclusion, in the new coordinate system $(x',y')$, we find a solution $u'=(u_1',u_2')^t\in H^1(\Omega')^2$, and transforming back to the original coordinate system $(x,y)$, we obtain a solution $u\in H^1(\Omega)^2$ satisfying \eqref{eqa.6}. Therefore, the proof of Theorem \ref{thma.3} is now complete.
\end{proof}

\section{An integration by parts formula}\label{sec-integration}
In this section, we are going to prove some results analogue to those proven in \cite[Section 1.3]{Tem01} for the Navier-Stokes equations. Consider the rectangular domain $\Omega$ where
\[
\Omega=(0,L_1)\times(0,L_2)
\]
for some $L_1,L_2>0$. Let $m$ be an integer, and $T_1=T_1(x,y)$ and $T_2=T_2(x,y)$ be two symmetric non-singular $m\times m$ matrices and belong to $\mathcal C^1(\overline\Omega)$. We consider the space
\[
\mathcal X(\Omega)=\aiminset{ \boldsymbol\theta\in L^2(\Omega)^m\;: (T_1\boldsymbol\theta)_x + (T_2\boldsymbol\theta)_y\in L^2(\Omega)^m }.
\]
We have an equivalent characterization of the space $\mathcal X(\Omega)$:
\[
\mathcal X(\Omega)=\aiminset{ \boldsymbol\theta\in L^2(\Omega)^m\;: T_1\boldsymbol\theta_x + T_2\boldsymbol\theta_y\in L^2(\Omega)^m }.
\]
The space $\mathcal X(\Omega)$ is endowed with the natural Hilbert norm $(\aiminnorm{\boldsymbol\theta}_{L^2}^2 + \aiminnorm{T_1\boldsymbol\theta_x + T_2\boldsymbol\theta_y}_{L^2}^2)^{1/2}$.

We aim to show that we can define a trace operator on the space $\mathcal X(\Omega)$. As a preliminary, we recall some results from \cite{Gri85}. Theorem 1.5.1.3 in \cite{Gri85} states that there exists a linear continuous operator (the trace operator) $\gamma_0 \in \mathcal L(H^1(\Omega), H^{1/2}(\Gamma))$ with $\Gamma = \partial\Omega$, and the trace operator $\gamma_0$ has a right continuous inverse operator (called lifting operator) $\ell_\Omega \in \mathcal L(H^{1/2}(\Gamma), H^1(\Omega))$ such that $\gamma_0\circ \ell_\Omega = $ the identity operator in $H^{1/2}(\Gamma)$. Let $H^{-1/2}(\Gamma)$ be the dual space of $H^{1/2}(\Gamma)$, we have the following trace theorem:
\begin{thm}\label{thmg.1}
Let $\Omega$ and $T_1,T_2$ be as above. Then there exists a linear continuous operator $\gamma_\nu\in \mathcal (\mathcal X(\Omega),H^{-1/2}(\Gamma))$ such that
\begin{equation}\label{eqg.1}
  \gamma_\nu \boldsymbol\theta =\text{the restriction of }T_1\boldsymbol\theta\nu_x + T_2\boldsymbol\theta\nu_y\text{ on }\Gamma,\quad\forall\,\boldsymbol\theta\in\mathcal D(\overline\Omega),
\end{equation}
where $\nu=(\nu_x,\nu_y)^t$ denotes the unit normal to $\Gamma$,
and the following integration by parts formula is true for all $\boldsymbol\theta\in\mathcal X(\Omega)$ and $\boldsymbol g\in H^1(\Omega)^m$:
\begin{equation}\label{eqg.2}
  \aimininner{ (T_1\boldsymbol\theta)_x + (T_2\boldsymbol\theta)_y }{\boldsymbol g} + \aimininner{T_1\boldsymbol g_x + T_2\boldsymbol g_y}{\boldsymbol\theta} = \aimininner{\gamma_\nu \boldsymbol\theta}{\gamma_0\boldsymbol g}_{H^{-1/2}(\Gamma)\times H^{1/2}(\Gamma)},
\end{equation}
where $\aimininner{\cdot}{\cdot}$ stands for the standard inner product on $L^2(\Omega)^m$.
\end{thm}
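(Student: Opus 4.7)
The plan is to mimic the proof of the classical trace theorem for $H(\mathrm{div},\Omega)$ in \cite[Section~1.3]{Tem01}. The role played there by the divergence operator will be played here by the first-order operator $\boldsymbol\theta \mapsto (T_1\boldsymbol\theta)_x + (T_2\boldsymbol\theta)_y$, and the crucial algebraic fact that replaces $\mathrm{div}(\varphi\boldsymbol u) = \mathrm{div}(\boldsymbol u)\varphi + \boldsymbol u\cdot\nabla\varphi$ is that, because $T_1,T_2$ are symmetric, one has pointwise
$$\partial_x(T_1\boldsymbol\theta \cdot \boldsymbol g) + \partial_y(T_2\boldsymbol\theta \cdot \boldsymbol g) = [(T_1\boldsymbol\theta)_x + (T_2\boldsymbol\theta)_y]\cdot\boldsymbol g + \boldsymbol\theta\cdot(T_1\boldsymbol g_x + T_2\boldsymbol g_y)$$
for all smooth $\boldsymbol\theta,\boldsymbol g$. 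Applying the standard divergence theorem on $\Omega$ then yields \eqref{eqg.2} in the smooth case, with boundary contribution exactly $\int_\Gamma (T_1\boldsymbol\theta\nu_x + T_2\boldsymbol\theta\nu_y)\cdot \boldsymbol g\,ds$, which simultaneously confirms the identification \eqref{eqg.1} on $\mathcal D(\overline\Omega)^m$.

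To extend $\gamma_\nu$ to $\mathcal X(\Omega)$, I would argue by duality rather than by density of smooth functions in $\mathcal X(\Omega)$. For each $\boldsymbol\theta \in \mathcal X(\Omega)$, consider the linear form on $H^1(\Omega)^m$ defined by
$$L_{\boldsymbol\theta}(\boldsymbol g) := \aimininner{(T_1\boldsymbol\theta)_x + (T_2\boldsymbol\theta)_y}{\boldsymbol g} + \aimininner{T_1\boldsymbol g_x + T_2\boldsymbol g_y}{\boldsymbol\theta}.$$
Since $T_1,T_2 \in \mathcal C^1(\overline\Omega)$ are bounded, Cauchy-Schwarz gives $|L_{\boldsymbol\theta}(\boldsymbol g)| \leq C\|\boldsymbol\theta\|_{\mathcal X(\Omega)}\|\boldsymbol g\|_{H^1(\Omega)^m}$. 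The crux is to show that $L_{\boldsymbol\theta}$ vanishes on $H^1_0(\Omega)^m$: for $\boldsymbol\varphi \in \mathcal D(\Omega)^m$, the definition of distributional derivatives together with the symmetry of $T_i$ yields $L_{\boldsymbol\theta}(\boldsymbol\varphi) = 0$, and density of $\mathcal D(\Omega)^m$ in $H^1_0(\Omega)^m$ extends this to all $\boldsymbol\varphi \in H^1_0(\Omega)^m$. Consequently $L_{\boldsymbol\theta}$ descends to a continuous linear form on $H^1(\Omega)^m/H^1_0(\Omega)^m \cong H^{1/2}(\Gamma)^m$; equivalently, composing $L_{\boldsymbol\theta}$ with the lifting operator $\ell_\Omega$ of \cite[Theorem~1.5.1.3]{Gri85} produces the element $\gamma_\nu\boldsymbol\theta \in H^{-1/2}(\Gamma)^m$, and the continuity of $\ell_\Omega$ gives $\|\gamma_\nu\boldsymbol\theta\|_{H^{-1/2}(\Gamma)^m} \leq C'\|\boldsymbol\theta\|_{\mathcal X(\Omega)}$.

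It only remains to check consistency with \eqref{eqg.1}: for $\boldsymbol\theta \in \mathcal D(\overline\Omega)^m$, the smooth-case identity computed in the first paragraph gives $L_{\boldsymbol\theta}(\boldsymbol g) = \int_\Gamma (T_1\boldsymbol\theta\nu_x + T_2\boldsymbol\theta\nu_y) \cdot \gamma_0\boldsymbol g\,ds$, which is exactly the assertion $\gamma_\nu\boldsymbol\theta = (T_1\boldsymbol\theta\nu_x + T_2\boldsymbol\theta\nu_y)\big|_\Gamma$. The main subtlety will be the vanishing of $L_{\boldsymbol\theta}$ on $\mathcal D(\Omega)^m$, where one must be careful that the distributional identity $\aimininner{(T_i\boldsymbol\theta)_x}{\boldsymbol\varphi} = -\aimininner{T_i\boldsymbol\theta}{\boldsymbol\varphi_x}$ can be rewritten as $-\aimininner{\boldsymbol\theta}{T_i\boldsymbol\varphi_x}$; this uses both the symmetry of $T_i$ and the $\mathcal C^1(\overline\Omega)$ regularity of the coefficients (which also underlies the equivalent characterization of $\mathcal X(\Omega)$ via $T_1\boldsymbol\theta_x + T_2\boldsymbol\theta_y$ already noted in the excerpt). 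Once this identification is secured, the remainder is a routine Hahn-Banach-style extension mirroring the $H(\mathrm{div})$ theory, and no density of smooth functions in $\mathcal X(\Omega)$ is required, which is convenient given the presence of corners.
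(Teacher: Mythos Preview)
Your proposal is correct and follows essentially the same route as the paper: both define the bilinear functional $L_{\boldsymbol\theta}(\boldsymbol g)$ (the paper calls it $X_{\boldsymbol\theta}(\gamma_0\boldsymbol g)$), prove it vanishes on $H^1_0(\Omega)^m$ by approximating with $\mathcal D(\Omega)^m$, use the lifting operator $\ell_\Omega$ to obtain the $H^{-1/2}(\Gamma)$ bound, and identify the trace on smooth $\boldsymbol\theta$ via the Stokes formula. The only cosmetic difference is that the paper phrases the vanishing on $H^1_0(\Omega)^m$ as ``independence of the choice of lift'' (its Lemma preceding the proof), whereas you phrase it as ``$L_{\boldsymbol\theta}$ descends to the quotient $H^1/H^1_0$''; these are the same statement.
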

\begin{rmk}\label{rmkg.1}
With similar arguments as for the proof of Theorem \ref{thmg.1}, there also exists a linear continuous operator $\tilde\gamma_v\in \mathcal (\mathcal X(\Omega),H^{-1/2}(\Gamma))$ such that
\begin{equation}
  \tilde\gamma_\nu \boldsymbol\theta =\text{the restriction of }\boldsymbol\theta\nu_x +\boldsymbol\theta\nu_y \text{ on }\Gamma,\quad\forall\,\boldsymbol\theta\in\mathcal D(\overline\Omega),
\end{equation}
and the following integration by parts formula holds for all $\boldsymbol\theta\in\mathcal X(\Omega)$ and $\boldsymbol g\in H^1(\Omega)^m$:
\begin{equation}\label{eqg.3}
  \aimininner{ (T_1\boldsymbol\theta)_x + (T_2\boldsymbol\theta)_y }{\boldsymbol g} + \aimininner{T_1\boldsymbol g_x + T_2\boldsymbol g_y}{\boldsymbol\theta} = \aimininner{\tilde\gamma_\nu \boldsymbol\theta}{T_1\gamma_0\boldsymbol g + T_2\gamma_0\boldsymbol g}_{H^{-1/2}(\Gamma)\times H^{1/2}(\Gamma)}.
\end{equation}
\end{rmk}

In order to prove Theorem \ref{thmg.1}, we let $\boldsymbol\phi\in H^{1/2}(\Gamma)$ and let $\boldsymbol g\in H^1(\Omega)$ with $\gamma_0\boldsymbol g=\boldsymbol\phi$. For $\boldsymbol\theta\in\mathcal X(\Omega)$, we set
\begin{equation*}\begin{split}
X_{\boldsymbol\theta}(\boldsymbol\phi)&=\int_\Omega\big[((T_1\boldsymbol\theta)_x + (T_2\boldsymbol\theta)_y) \cdot \boldsymbol g  + (T_1\boldsymbol g_x + T_2\boldsymbol g_y)\cdot\boldsymbol\theta\big]\,dxdy\\
&=\aimininner{ (T_1\boldsymbol\theta)_x + (T_2\boldsymbol\theta)_y }{\boldsymbol g} + \aimininner{T_1\boldsymbol g_x + T_2\boldsymbol g_y}{\boldsymbol\theta}.
\end{split}\end{equation*}

\begin{lemma}\label{lemg.1}
$X_{\boldsymbol\theta}(\boldsymbol\phi)$ is independent of the choice of $\boldsymbol g$, as long as $\boldsymbol g\in H^1(\Omega)$ and $\gamma_0\boldsymbol g=\boldsymbol\phi$.
\end{lemma}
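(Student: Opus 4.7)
The plan is to show that if $\boldsymbol g_1, \boldsymbol g_2 \in H^1(\Omega)^m$ both satisfy $\gamma_0 \boldsymbol g_i = \boldsymbol\phi$, then the difference $\boldsymbol g := \boldsymbol g_1 - \boldsymbol g_2$ belongs to $H^1_0(\Omega)^m$, and consequently the value of $X_{\boldsymbol\theta}$ computed with $\boldsymbol g_1$ and with $\boldsymbol g_2$ differ by the quantity
\[
Y(\boldsymbol g) := \aimininner{ (T_1\boldsymbol\theta)_x + (T_2\boldsymbol\theta)_y }{\boldsymbol g} + \aimininner{T_1\boldsymbol g_x + T_2\boldsymbol g_y}{\boldsymbol\theta}.
\]
It therefore suffices to prove that $Y(\boldsymbol g) = 0$ for every $\boldsymbol g \in H^1_0(\Omega)^m$.

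First I would observe that $Y$ is continuous in $\boldsymbol g$ for the $H^1(\Omega)^m$-topology: since $\boldsymbol\theta \in \mathcal X(\Omega)$ both $\boldsymbol\theta$ and $(T_1\boldsymbol\theta)_x + (T_2\boldsymbol\theta)_y$ lie in $L^2(\Omega)^m$, and since $T_1, T_2 \in \mathcal C^1(\overline\Omega)$, the map $\boldsymbol g \mapsto T_1\boldsymbol g_x + T_2\boldsymbol g_y$ is continuous from $H^1(\Omega)^m$ to $L^2(\Omega)^m$. Hence by the density of $\mathcal D(\Omega)^m$ in $H^1_0(\Omega)^m$, it is enough to check that $Y(\boldsymbol g)=0$ when $\boldsymbol g \in \mathcal D(\Omega)^m$.

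For such a compactly supported smooth test function, the distributional definition of derivative gives
\[
\aimininner{(T_1\boldsymbol\theta)_x + (T_2\boldsymbol\theta)_y}{\boldsymbol g} = -\aimininner{T_1\boldsymbol\theta}{\boldsymbol g_x} - \aimininner{T_2\boldsymbol\theta}{\boldsymbol g_y},
\]
and using the symmetry of $T_1,T_2$ the right-hand side equals $-\aimininner{\boldsymbol\theta}{T_1\boldsymbol g_x + T_2\boldsymbol g_y}$, so that $Y(\boldsymbol g)=0$. This is essentially the routine part; the only thing to notice is that the pairings make sense because each factor is at worst $L^2$.

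The one place that requires a little care is the claim that $\boldsymbol g_1 - \boldsymbol g_2 \in H^1_0(\Omega)^m$, which I would justify by invoking Theorem 1.5.1.3 of \cite{Gri85} (recalled above): the kernel of the trace operator $\gamma_0 : H^1(\Omega) \to H^{1/2}(\Gamma)$ is exactly $H^1_0(\Omega)$. I do not foresee a genuine obstacle here — the lemma is essentially the definition of the distributional $(T_1\partial_x + T_2\partial_y)$-derivative paired against test functions — but the cleanest way to organize it is as above: continuity plus density reduces everything to integration by parts for $\mathcal C^\infty_c$ functions, which is immediate from the definition of distributional derivative together with the symmetry of $T_1, T_2$.
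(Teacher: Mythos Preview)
Your proof is correct and follows essentially the same approach as the paper: reduce to showing the expression vanishes for $\boldsymbol g\in H_0^1(\Omega)^m$, approximate by $\mathcal D(\Omega)^m$, and verify the identity for compactly supported smooth functions. You are somewhat more explicit than the paper about the continuity of $Y$ in $\boldsymbol g$ and the role of symmetry of $T_1,T_2$, but the structure and ideas are identical.
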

\begin{proof}
Let $\boldsymbol g_1$ and $\boldsymbol g_2$ belong to $H^1(\Omega)$ such that
\[
\gamma_0\boldsymbol g_1=\gamma_0\boldsymbol g_2=\boldsymbol\phi,
\]
and let $\boldsymbol g=\boldsymbol g_1- \boldsymbol g_2$. We must prove that
\[
\aimininner{ (T_1\boldsymbol\theta)_x + (T_2\boldsymbol\theta)_y }{\boldsymbol g_1} + \aimininner{T_1\boldsymbol g_{1,x} + T_2\boldsymbol g_{1,y}}{\boldsymbol\theta}=\aimininner{ (T_1\boldsymbol\theta)_x + (T_2\boldsymbol\theta)_y }{\boldsymbol g_2} + \aimininner{T_1\boldsymbol g_{2,x} + T_2\boldsymbol g_{2,y}}{\boldsymbol\theta},
\]
that is to say
\begin{equation}\label{eqg.4}
  \aimininner{ (T_1\boldsymbol\theta)_x + (T_2\boldsymbol\theta)_y }{\boldsymbol g} + \aimininner{T_1\boldsymbol g_x + T_2\boldsymbol g_y}{\boldsymbol\theta} = 0.
\end{equation}

But since $\boldsymbol g\in H^1(\Omega)$ and $\gamma_0\boldsymbol g=0$, $\boldsymbol g$ belongs to $H_0^1(\Omega)$ and is the limit in $H^1(\Omega)$ of smooth functions with compact support: $\boldsymbol g=\boldsymbol g_k$, $\boldsymbol g_k\in\mathcal D(\Omega)$.  It is obvious that
\[
  \aimininner{ (T_1\boldsymbol\theta)_x + (T_2\boldsymbol\theta)_y }{\boldsymbol g_k} + \aimininner{T_1\boldsymbol g_{k,x} + T_2\boldsymbol g_{k,y}}{\boldsymbol\theta} = 0,\quad\forall\,\boldsymbol g_k\in\mathcal D(\Omega),
\]
and \eqref{eqg.4} follows by taking the limit $k\rightarrow\infty$.
\end{proof}

\begin{proof}[Proof of Theorem \ref{thmg.1}]
For $\boldsymbol\phi\in H^{1/2}(\Gamma)$, let us take now $\boldsymbol g=\ell_\Omega \boldsymbol\phi$. Then by Schwarz inequality
\[
\aiminabs{ X_{\boldsymbol\theta}(\boldsymbol\phi) } \leq c_1 \aiminnorm{\boldsymbol\theta}_{\mathcal X(\Omega)}\aiminnorm{\boldsymbol g}_{H^1(\Omega)},
\]
where $c_1$ only depends on the norm of $T_1$ and $T_2$ in $\mathcal C^1(\overline\Omega)$. Since $\ell_\Omega\in\mathcal L(H^{1/2}(\Gamma), H^1(\Omega))$, we find
\begin{equation}\label{eqg.8}
  \aiminabs{ X_{\boldsymbol\theta}(\boldsymbol\phi) } \leq c_1c_0 \aiminnorm{\boldsymbol\theta}_{\mathcal X(\Omega)}\aiminnorm{\boldsymbol \phi}_{H^{1/2}(\Gamma)},
\end{equation}
where $c_0$ denotes the norm of the linear operator $\ell_\Omega$.

Therefore, the mapping $\boldsymbol\phi\mapsto X_{\boldsymbol\theta}(\boldsymbol\phi)$ is linear continuous mapping from $H^{1/2}(\Gamma)$ into $\mathbb R$. Thus there exists $\boldsymbol h=\boldsymbol h(\boldsymbol\theta)\in H^{-1/2}(\Gamma)$ such that
\begin{equation}\label{eqg.9}
  X_{\boldsymbol\theta}(\boldsymbol\phi) = \aimininner{\boldsymbol h(\boldsymbol\theta)}{\boldsymbol\phi}.
\end{equation}
It is clear that the mapping $\boldsymbol\theta\mapsto \boldsymbol h(\boldsymbol\theta)$ is linear, and by \eqref{eqg.8},
\begin{equation}
  \aiminnorm{\boldsymbol h}_{H^{-1/2}(\Gamma)} \leq c_1c_0\aiminnorm{\boldsymbol\theta}_{\mathcal X(\Omega)};
\end{equation}
this proves that the mapping $\boldsymbol \theta\mapsto \boldsymbol h(\boldsymbol \theta)=\gamma_\nu\boldsymbol\theta$ is continuous from $\mathcal X(\Omega)$ into $H^{-1/2}(\Gamma)$.

The last point to prove \eqref{eqg.1} and \eqref{eqg.2} follows from the definition of $\gamma_\nu\boldsymbol\theta$ and Lemma \ref{lemg.2} below.
\end{proof}
\begin{lemma}\label{lemg.2}
If $\boldsymbol\theta\in\mathcal D(\overline\Omega)$. Then
\[
\gamma_\nu\boldsymbol\theta = \text{the restriction of }T_1\boldsymbol\theta\nu_x + T_2\boldsymbol\theta\nu_y\text{ on }\Gamma,
\]
where $\nu=(\nu_x,\nu_y)^t$ denotes the unit normal to $\Gamma$.
\end{lemma}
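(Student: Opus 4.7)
The plan is to take a smooth $\boldsymbol\theta \in \mathcal D(\overline\Omega)$ and verify that the bilinear functional $X_{\boldsymbol\theta}(\boldsymbol\phi)$ defined above reduces, in this case, to the natural boundary pairing against $T_1\boldsymbol\theta\nu_x + T_2\boldsymbol\theta\nu_y$; since $\gamma_\nu \boldsymbol\theta$ was characterized by \eqref{eqg.9} via this functional, the identification will follow by uniqueness in the duality $H^{-1/2}(\Gamma) \times H^{1/2}(\Gamma)$.

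Concretely, fix $\boldsymbol\phi \in H^{1/2}(\Gamma)$ and any lift $\boldsymbol g \in H^1(\Omega)^m$ with $\gamma_0 \boldsymbol g = \boldsymbol\phi$; by Lemma \ref{lemg.1} the value of $X_{\boldsymbol\theta}(\boldsymbol\phi)$ does not depend on the choice of $\boldsymbol g$, so we may approximate $\boldsymbol g$ by functions in $\mathcal C^\infty(\overline\Omega)^m$ in the $H^1$ topology and assume it smooth. With $\boldsymbol\theta$ smooth and $T_1,T_2$ in $\mathcal C^1(\overline\Omega)$, a pointwise product-rule computation, combined crucially with the symmetry of $T_1$ and $T_2$ (so that $(T_i \boldsymbol g_*) \cdot \boldsymbol\theta = \boldsymbol g_* \cdot T_i \boldsymbol\theta$), rewrites the integrand of $X_{\boldsymbol\theta}(\boldsymbol\phi)$ as a pure divergence:
\begin{equation*}
\bigl((T_1\boldsymbol\theta)_x + (T_2\boldsymbol\theta)_y\bigr)\cdot \boldsymbol g + (T_1\boldsymbol g_x + T_2 \boldsymbol g_y)\cdot \boldsymbol\theta = \partial_x(T_1\boldsymbol\theta \cdot \boldsymbol g) + \partial_y(T_2\boldsymbol\theta \cdot \boldsymbol g).
\end{equation*}
I would then apply the classical divergence theorem on the rectangle $\Omega$, which is applicable because both factors are smooth up to $\overline\Omega$, to obtain
\begin{equation*}
X_{\boldsymbol\theta}(\boldsymbol\phi) = \int_\Gamma \bigl(T_1\boldsymbol\theta\, \nu_x + T_2\boldsymbol\theta\, \nu_y\bigr)\cdot \gamma_0 \boldsymbol g \,\mathrm{d}\sigma = \int_\Gamma \bigl(T_1\boldsymbol\theta\, \nu_x + T_2\boldsymbol\theta\, \nu_y\bigr)\cdot \boldsymbol\phi\, \mathrm{d}\sigma.
\end{equation*}

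Comparing this with the defining identity \eqref{eqg.9}, namely $X_{\boldsymbol\theta}(\boldsymbol\phi) = \langle \gamma_\nu \boldsymbol\theta, \boldsymbol\phi\rangle_{H^{-1/2}(\Gamma)\times H^{1/2}(\Gamma)}$, and noting that this equality holds for every $\boldsymbol\phi \in H^{1/2}(\Gamma)$, one concludes that $\gamma_\nu \boldsymbol\theta$, viewed as an element of $H^{-1/2}(\Gamma)$, is represented by the $L^2(\Gamma)$-function $T_1\boldsymbol\theta\,\nu_x + T_2\boldsymbol\theta\,\nu_y$ (which is smooth on each side of $\partial\Omega$ since $\boldsymbol\theta \in \mathcal C^\infty(\overline\Omega)^m$).

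The only subtlety I foresee is a bookkeeping issue rather than a conceptual one: the rectangle $\Omega$ has corners where $\nu$ is not defined, but this is irrelevant since the boundary integral splits as a sum of four integrals over the open sides $\Gamma_W,\Gamma_E,\Gamma_S,\Gamma_N$ on which $\nu$ is constant, and the corner set has one-dimensional measure zero. Everything else amounts to the symmetry of $T_1,T_2$, the product rule, and the Gauss--Green formula on a Lipschitz domain applied to $\mathcal C^1$ vector fields — all standard.
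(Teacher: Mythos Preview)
Your proof is correct and follows essentially the same route as the paper: rewrite the integrand as the divergence of $(\boldsymbol g\cdot T_1\boldsymbol\theta,\ \boldsymbol g\cdot T_2\boldsymbol\theta)$ (using the symmetry of $T_1,T_2$), apply the Gauss--Green/Stokes formula on $\Omega$, and identify $\gamma_\nu\boldsymbol\theta$ by comparison with \eqref{eqg.9}. The only cosmetic difference is that the paper first computes $X_{\boldsymbol\theta}(\gamma_0\boldsymbol g)$ for $\boldsymbol g\in\mathcal D(\overline\Omega)$ and then passes to all $\boldsymbol\phi\in H^{1/2}(\Gamma)$ by density of such traces, whereas you fix $\boldsymbol\phi$ and approximate the lift $\boldsymbol g$; note that your appeal to Lemma~\ref{lemg.1} is not really what justifies replacing $\boldsymbol g$ by smooth approximants (those approximants need not share the trace $\boldsymbol\phi$) --- it is simply the $H^1$-continuity of the integral expression that makes the limit go through.
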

\begin{proof}
For such a smooth $\boldsymbol\theta$ and for any $\boldsymbol g\in\mathcal D(\overline\Omega)$, we have
\begin{equation*}\begin{split}
 X_{\boldsymbol\theta}(\gamma_0\boldsymbol g) &=  \int_\Omega\big[((T_1\boldsymbol\theta)_x + (T_2\boldsymbol\theta)_y) \cdot \boldsymbol g  + (T_1\boldsymbol g_x + T_2\boldsymbol g_y)\cdot\boldsymbol\theta\big]\,dxdy\\
&=\int_\Omega \text{div}\,(\boldsymbol g\cdot T_1\boldsymbol\theta,\, \boldsymbol g\cdot T_2\boldsymbol\theta)\,dxdy\\
&=\int_\Gamma (\boldsymbol g\cdot T_1\boldsymbol\theta,\, \boldsymbol g\cdot T_2\boldsymbol\theta)\cdot \nu\,d\Gamma\quad\text{ (by the Stokes formula)}\\
&=\aimininner{ T_1\boldsymbol\theta\nu_x + T_2\boldsymbol\theta\nu_y }{\gamma_0\boldsymbol g},
\end{split}\end{equation*}
where $\nu=(\nu_x,\nu_y)^t$ is the unit normal to $\Gamma$

Since for these function $\boldsymbol g$, the traces $\gamma_0\boldsymbol g$ form a dense subset of $H^{1/2}(\Gamma)$, the formula
\[
 X_{\boldsymbol\theta}(\boldsymbol\phi) = \aimininner{ T_1\boldsymbol\theta\nu_x + T_2\boldsymbol\theta\nu_y }{\boldsymbol\phi}
\]
is also true by continuity for every $\boldsymbol \phi\in H^{1/2}(\Gamma)$. By comparison with \eqref{eqg.9}, we obtain that $\gamma_\nu\boldsymbol\theta=T_1\boldsymbol\theta\nu_x + T_2\boldsymbol\theta\nu_y$.
\end{proof}
\section{The density theorems}\label{sec-density}
In this appendix, we establish general density theorems regarding function spaces defined on the domain $\Omega=(0,L_1)\times(0,L_2)$. We first recall the results from \cite[Section 2.2]{HT12} in the constant coefficients case and then extend those results to the variable coefficients case. These theorems were needed for proving the positivity of certain unbounded operators defined in Subsections \ref{subsec2.1},\,\ref{subsec3.1} and showing that they are infinitesimal generators of (quasi-)contraction semigroups on Lebesgue spaces.

\subsection{The constant coefficients case}\label{sec-density-constant}
All the results in this subsection are taken from \cite[Section 2.2]{HT12}, and we briefly state them as follows.

For $\lambda$ fixed, $\lambda\in \mathbb{R}, \lambda\neq 0$, we set $T\theta=\theta_y+\lambda \theta_x$, and introduce the function space
\begin{equation*}
\mathcal{X}_1(\Omega)=\{ \theta\in L^2(\Omega), T\theta=\theta_y+\lambda \theta_x\in L^2(\Omega) \}. 
\end{equation*}
We first have the following density result.
\begin{prop}\label{prop1}
$\mathcal{C}^\infty(\overline\Omega)\cap\mathcal{X}_1(\Omega)$ is dense in $\mathcal{X}_1(\Omega)$.
\end{prop}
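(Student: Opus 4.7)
The plan is to combine an affine dilation that brings a copy of $\theta$ strictly inside $\Omega$ with standard convolution mollification. The key observation is that, because $T = \partial_y + \lambda\partial_x$ has constant coefficients, both operations intertwine nicely with $T$, and because $\Omega$ is convex, contracting towards the centroid maps $\overline\Omega$ strictly into $\Omega$.

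First I would fix the center $c = (L_1/2, L_2/2)$ of $\Omega$ and, for small $\epsilon > 0$, introduce the dilation
\[
S_\epsilon(x,y) = (1-\epsilon)(x,y) + \epsilon c,
\]
which maps some open neighborhood $U_\epsilon \supset \overline{\Omega}$ into $\Omega$. I would then set $\theta_\epsilon := \theta \circ S_\epsilon \in L^2(U_\epsilon)$ and establish the distributional identity
\[
T\theta_\epsilon = (1-\epsilon)\,(T\theta)\circ S_\epsilon,
\]
which in particular places $\theta_\epsilon$ in $\mathcal X_1(U_\epsilon)$ since $T\theta \in L^2(\Omega)$. The usual continuity of the dilation action on $L^2$ (approximate $\theta$ and $T\theta$ in $L^2$ by continuous compactly supported functions, for which uniform convergence is immediate, and conclude by a triangle inequality) then yields $\theta_\epsilon \to \theta$ and $T\theta_\epsilon \to T\theta$ in $L^2(\Omega)$ as $\epsilon \to 0^+$.

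Second, for each fixed $\epsilon$, let $\rho_\delta$ denote a standard radial mollifier and set $\theta_{\epsilon,\delta} := \theta_\epsilon * \rho_\delta$. For $\delta$ smaller than the distance from $\overline{\Omega}$ to $\partial U_\epsilon$, the convolution is well-defined and smooth on a neighborhood of $\overline{\Omega}$, so $\theta_{\epsilon,\delta} \in \mathcal{C}^\infty(\overline{\Omega})$. Since $T$ has constant coefficients, $T\theta_{\epsilon,\delta} = (T\theta_\epsilon) * \rho_\delta$, and classical mollification theory gives $\theta_{\epsilon,\delta} \to \theta_\epsilon$ and $T\theta_{\epsilon,\delta} \to T\theta_\epsilon$ in $L^2(\Omega)$ as $\delta \to 0^+$. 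A diagonal extraction $\delta = \delta(\epsilon) \to 0$ then produces the desired approximating sequence in $\mathcal{C}^\infty(\overline{\Omega}) \cap \mathcal X_1(\Omega)$ converging to $\theta$ in the graph norm.

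The one nontrivial step is justifying the identity $T\theta_\epsilon = (1-\epsilon)(T\theta)\circ S_\epsilon$ directly from the $\mathcal X_1$ definition, since the usual chain rule is unavailable for merely $L^2$ functions. I would handle this by pairing against an arbitrary test function $\phi \in \mathcal{C}_c^\infty(\Omega)$ and changing variables under $S_\epsilon$: because $S_\epsilon$ is affine, the pullback of $T^*\phi = -\phi_y - \lambda\phi_x$ differs from $T^*(\phi \circ S_\epsilon^{-1})$ by a single scalar factor $(1-\epsilon)$, which (combined with the Jacobian $(1-\epsilon)^{-2}$) produces exactly the stated identity. Once this identity is in hand, the rest of the argument is routine and the density claim follows.
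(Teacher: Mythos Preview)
Your argument is correct but follows a genuinely different route from the paper's. The paper (as indicated in the text and carried out in detail for the variable-coefficient Proposition~\ref{T-prop3.1}) proceeds by a partition of unity subordinate to a covering of $\overline\Omega$ by an interior set and balls centered on the sides and corners; on each boundary piece it uses a \emph{directional} mollifier (e.g.\ supported in $\{x<0,\,y<0\}$ near the origin) so that the boundary measure in $T\tilde v = \widetilde{Tv}+\mu$ is pushed outside $\Omega$ after convolution. Your approach instead exploits the convexity of the rectangle and the constancy of the coefficients: the affine contraction $S_\epsilon$ toward the centroid produces an $L^2$ extension of $\theta$ to a full neighborhood of $\overline\Omega$, the identity $T(\theta\circ S_\epsilon)=(1-\epsilon)(T\theta)\circ S_\epsilon$ controls the graph norm, and then a single standard radial mollifier finishes the job. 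Your method is more economical in this setting and avoids localization entirely; the paper's method, on the other hand, is what generalizes to the variable-coefficient case in Subsection~\ref{T-density}, since the dilation identity fails when $\lambda$ depends on $(x,y)$, whereas the partition-of-unity argument survives via Friedrichs' lemma.
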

The proof of Proposition \ref{prop1} is classically conducted by using the method of partition of unity, and is simpler than the proof of Proposition \ref{T-prop3.1} below for the variable coefficients case, where we will give full details.

We also have the following trace result.
\begin{prop}\label{M-propb.2}
If $u\in \mathcal{X}(\Omega)$, then the traces of $u$ are defined on all of $\partial\Omega$, i.e. the traces of $u$ are defined at $x=0,L_1$, and $y=0,L_2$, and they belong to the respective spaces $H_y^{-1}(0,L_2)$ and $H_x^{-1}(0,L_1)$. Furthermore the trace operators are linear continuous in the corresponding spaces, e.g., $u\in\mathcal{X}(\Omega)\rightarrow u|_{x=0}$ is continuous from $\mathcal{X}(\Omega)$ into $H_y^{-1}(0,L_2)$.
\end{prop}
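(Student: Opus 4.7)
The plan is to combine the density result (Proposition~\ref{prop1}) with a duality argument carried out via a carefully chosen integration by parts. Fix a cut-off $\chi \in \mathcal{C}^\infty([0,L_1])$ with $\chi(0)=1$ and $\chi(L_1)=0$. For a smooth $u \in \mathcal{C}^\infty(\overline\Omega)\cap\mathcal{X}_1(\Omega)$ and a test function $\phi \in H_0^1(0,L_2)$, I would write
\[
\int_0^{L_2} u(0,y)\phi(y)\,dy = -\int_\Omega \partial_x\bigl(\chi(x) u(x,y)\phi(y)\bigr)\,dx\,dy = -\int_\Omega \chi'(x) u \phi \,dx\,dy - \int_\Omega \chi(x) u_x \phi \,dx\,dy.
\]
The first integral is immediately bounded by $\|\chi'\|_\infty \|u\|_{L^2(\Omega)}\|\phi\|_{L^2(0,L_2)}$.

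The only subtlety is that we do not control $u_x$ in $L^2$; only the combination $Tu = u_y + \lambda u_x$ lies in $L^2$. The key trick is to substitute $\lambda u_x = Tu - u_y$, so that
\[
\lambda \int_\Omega \chi(x) u_x \phi \,dx\,dy = \int_\Omega \chi(x)(Tu)\phi\,dx\,dy - \int_\Omega \chi(x) u_y \phi\,dx\,dy,
\]
and then integrate by parts in $y$ on the last term; because $\phi(0)=\phi(L_2)=0$, no boundary contribution appears and this term becomes $\int_\Omega \chi(x) u\, \phi'(y)\,dx\,dy$, controlled by $\|\chi\|_\infty \|u\|_{L^2(\Omega)}\|\phi'\|_{L^2(0,L_2)}$. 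Assembling the estimates yields
\[
\Bigl|\int_0^{L_2} u(0,y)\phi(y)\,dy\Bigr| \le C\bigl(\|u\|_{L^2(\Omega)} + \|Tu\|_{L^2(\Omega)}\bigr)\|\phi\|_{H_0^1(0,L_2)},
\]
with $C$ depending only on $\lambda$ and the cut-off $\chi$. This identifies $u|_{x=0}$ as a continuous linear functional on $H_0^1(0,L_2)$, i.e.\ as an element of $H_y^{-1}(0,L_2)$, and shows the trace map is continuous from $\mathcal{C}^\infty(\overline\Omega)\cap\mathcal{X}_1(\Omega)$ into $H_y^{-1}(0,L_2)$.

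The final step is to extend the trace map by continuity to all of $\mathcal{X}(\Omega)=\mathcal{X}_1(\Omega)$ using the density statement of Proposition~\ref{prop1}, and then repeat the argument at the other three sides: for $x=L_1$ I would use a cut-off with $\chi(L_1)=1$ and $\chi(0)=0$; for $y=0$ and $y=L_2$ I would swap the roles of $x$ and $y$, testing against $\psi \in H_0^1(0,L_1)$ and using the dual substitution $u_y = Tu - \lambda u_x$ together with an integration by parts in $x$ to handle the now-problematic $u_x$ term. The main (and essentially only) obstacle is the absence of a priori control on the separate partial derivatives $u_x, u_y$, and this is precisely defused by the substitution above, which trades the uncontrolled transverse derivative for the controllable quantity $Tu$ plus a tangential derivative that, after one integration by parts, lands on the smooth test function.
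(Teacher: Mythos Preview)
Your argument is correct and is precisely the standard duality-and-density proof one expects here; the paper itself does not give a proof but merely cites \cite[Section~2.2]{HT12}, and the approach there is the same cut-off plus substitution $\lambda u_x = Tu - u_y$ followed by integration by parts in the tangential variable. One cosmetic point: when you bound $\int_\Omega \chi' u\,\phi$ by $\|\chi'\|_\infty\|u\|_{L^2(\Omega)}\|\phi\|_{L^2(0,L_2)}$ you are implicitly absorbing a factor $\sqrt{L_1}$ coming from the fact that $\phi$ is extended constantly in $x$; this is harmless but worth making explicit in the constant $C$.
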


Recall that $\Gamma_W,\Gamma_E,\Gamma_S,\Gamma_N$ are the boundaries $x=0,x=L_1,y=0,y=L_2$ respectively, and $\Gamma$ can be any union of the sets $\Gamma_W,\Gamma_E,\Gamma_S,\Gamma_N$. 
For any function $v$ defined on $\Omega$, here and again in the following we denote by $\tilde v$ the function equal to $v$ in $\Omega$ and to $0$ in $\mathbb{R}^2\backslash\Omega$. 
We introduce the function spaces:
\begin{equation}\label{eq:eq01}
\mathcal{X}_\Gamma(\Omega)=\{ \theta\in L^2(\Omega), T\theta=\theta_y+\lambda \theta_x\in L^2(\Omega), \theta|_\Gamma = 0 \},
\end{equation}
\begin{equation}\nonumber
\mathcal{V}_\Gamma(\Omega) = \{ \theta\in \mathcal{C}^\infty(\overline{\Omega}), 
\text{ and $\theta$ vanishes in a neighborhood of } \Gamma\}.
\end{equation}

The main density result of \cite[Section 2.2]{HT12} is the following.
\begin{thm}\label{L-thm1}
Suppose that $\Gamma=\Gamma_W\cup\Gamma_S$ and $\lambda>0$. Then
\begin{equation}\nonumber
\mathcal{V}_\Gamma(\Omega)\cap\mathcal{X}_\Gamma(\Omega) \text{ is dense in } \mathcal{X}_\Gamma(\Omega).
\end{equation}
\end{thm}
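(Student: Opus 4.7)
The plan is to approximate $\theta \in \mathcal{X}_\Gamma(\Omega)$ by the classical extend-translate-mollify procedure, exploiting that the characteristic direction $(\lambda,1)$ of $T = \partial_y + \lambda \partial_x$ points into $\Omega$ from $\Gamma_W \cup \Gamma_S$ (since $\lambda > 0$), so that a translation by $h(\lambda, 1)$ with $h > 0$ automatically carries the function away from $\Gamma$.

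First, I would extend $\theta$ by zero across $\Gamma_W \cup \Gamma_S$ only to obtain $\tilde\theta$ on the open set $D := (-\infty, L_1) \times (-\infty, L_2)$. The crucial property is that $\tilde\theta \in L^2(D)$ and $T\tilde\theta = \widetilde{T\theta} \in L^2(D)$, where $\widetilde{T\theta}$ is the extension by zero of $T\theta$. To verify this, I pair $T\tilde\theta$ (taken in the distributional sense on $D$) with an arbitrary $\phi \in \mathcal{D}(D)$: such $\phi$ is compactly supported away from $\Gamma_E \cup \Gamma_N$ but may not vanish on $\Gamma_W \cup \Gamma_S$. Integrating by parts on $\Omega$, the boundary terms on $\Gamma_E \cup \Gamma_N$ vanish by the support of $\phi$, while those on $\Gamma_W \cup \Gamma_S$ vanish because $\theta|_\Gamma = 0$ as a trace; this trace is meaningful thanks to Proposition \ref{M-propb.2}. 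This is exactly where the membership in $\mathcal{X}_\Gamma(\Omega)$ enters, and it is the main subtle point of the argument—without the vanishing trace, a Dirac contribution on $\Gamma$ would spoil $T\tilde\theta \in L^2$.

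Next, for $h>0$ small, set $\theta_h(x,y) := \tilde\theta(x - \lambda h, y - h)$, which is defined on $D + (\lambda h, h) \supset \Omega$. Since $\tilde\theta$ vanishes whenever its first coordinate is negative or its second is negative, $\theta_h(x,y) = 0$ whenever $x < \lambda h$ or $y < h$, so $\theta_h$ vanishes in a neighborhood of $\Gamma$ inside $\Omega$. By translation-invariance of the constant-coefficient operator $T$, one has $T\theta_h(x,y) = (T\tilde\theta)(x - \lambda h, y - h)$; combined with continuity of translation in $L^2$, this yields $\theta_h \to \theta$ and $T\theta_h \to T\theta$ in $L^2(\Omega)$ as $h \to 0^+$.

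Finally, let $\rho_\epsilon$ denote a standard radial mollifier of support-radius $\epsilon$, and form $\theta_{h,\epsilon} := \theta_h \ast \rho_\epsilon$. For $\epsilon < \min(\lambda h, h)$ the function $\theta_h$ is defined on an $\epsilon$-neighborhood of $\overline\Omega$, so $\theta_{h,\epsilon}$ is smooth up to $\partial\Omega$; a direct inspection of the convolution integral shows $\theta_{h,\epsilon}(x,y) = 0$ whenever $x < \lambda h - \epsilon$ or $y < h - \epsilon$, hence in a neighborhood of $\Gamma$ provided $\epsilon < \min(\lambda h, h)/2$. Therefore $\theta_{h,\epsilon} \in \mathcal{V}_\Gamma(\Omega) \cap \mathcal{X}_\Gamma(\Omega)$, and using $T(\theta_h \ast \rho_\epsilon) = (T\theta_h) \ast \rho_\epsilon$ together with standard mollifier estimates yields $\theta_{h,\epsilon} \to \theta_h$ in $\mathcal{X}_\Gamma(\Omega)$ as $\epsilon \to 0$. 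A diagonal choice $\epsilon = \epsilon_h \to 0$ sufficiently fast with $h$ produces the desired approximating sequence.
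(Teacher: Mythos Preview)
Your global translate-then-mollify argument is correct and more direct than the paper's route. The paper does not spell out the proof here but refers to \cite{HT12}, where (as the proof of the companion Proposition~\ref{T-prop3.1} indicates) one localizes via a partition of unity subordinate to balls around corners, open edges, and the interior, and then for each piece extends by zero and chooses a mollifier with carefully placed support. Your single translation by $h(\lambda,1)$ replaces all of this in one stroke by exploiting the constant characteristic direction, which is cleaner here; the partition-of-unity approach, by contrast, ports directly to the variable-coefficient Theorem~\ref{T-thm3.1}, where no global translation direction is available.

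One point needs slightly more care. In verifying $T\tilde\theta=\widetilde{T\theta}$ on $D$, you integrate by parts against $\phi\in\mathcal D(D)$ and claim the boundary terms on $\Gamma_W\cup\Gamma_S$ vanish because $\theta|_\Gamma=0$. But the corner $(0,0)$ lies in the interior of $D$, so $\phi|_{\Gamma_W}$ need not vanish at $y=0$; thus $\phi|_{\Gamma_W}\notin H^1_0(0,L_2)$ in general, and the $H^{-1}\times H^1_0$ pairing with the trace from Proposition~\ref{M-propb.2} is not a priori defined. The fix is short: for $\phi$ supported away from $(0,0)$ your argument goes through verbatim, so the distribution $T\tilde\theta-\widetilde{T\theta}$ is supported at the single point $(0,0)$; since it also lies in $H^{-1}(D)$ (first derivatives of an $L^2$ function minus an $L^2$ function) and in two dimensions no nonzero element of $H^{-1}$ is supported at a point, it must vanish. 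With this remark your proof is complete.
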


\begin{rmk}\label{L-rmk2}
Theorem \ref{L-thm1} is also valid when $\Gamma$ is made of two contiguous sides of $\partial\Omega$, that is in the following three cases:
\begin{equation}
\begin{cases}
\Gamma=\Gamma_E\cup\Gamma_N \text{ and } \lambda >0, \\
\Gamma=\Gamma_W\cup\Gamma_N \text{ and } \lambda <0, \\
\Gamma=\Gamma_E\cup\Gamma_S \text{ and } \lambda <0.
\end{cases}
\end{equation}
\end{rmk}

\subsection{The variable coefficients case}\label{T-density}
In this subsection, we choose $\lambda=\lambda(x,y)\in\mathcal C^1(\overline\Omega)$ satisfying 
\begin{equation}\label{T-asp3.1}
\begin{cases}
c_0 \leq \lambda(x,y) \leq c_1, \\
\aiminabs{\lambda_x} \leq M,
\end{cases}
\end{equation}
where $c_0,c_1,M$ are positive constants. We extend $\lambda$ to $\mathbb R^2$ so that $\lambda$  belongs to $\mathcal C^1(\mathbb R^2)$.
We set $T\theta=\lambda(x,y) \theta_x + \theta_y$, and introduce the function space
\begin{equation}\nonumber
\mathcal{Y}_1(\Omega)=\{ \theta\in L^2(\Omega), T\theta=\lambda(x,y) \theta_x + \theta_y\in L^2(\Omega) \}.
\end{equation}
We observe that $\mathcal{Y}_1(\Omega)$ is a space of local type, that is
\begin{equation}\label{S-eq3.e1}
 \text{If }\theta\in \mathcal{Y}_1(\Omega), \psi\in\mathcal{C}^\infty(\overline\Omega), \text{ then } \theta\psi\in\mathcal{Y}_1(\Omega).
\end{equation}
This property follows from $T(\psi\theta)=\psi T\theta+(\lambda\psi_x+\psi_y)\theta$.

We need to prove results similar to those of Section \ref{sec-density-constant}. 
We first show that the smooth functions are dense in $\mathcal{Y}_1(\Omega)$. 
\begin{prop}\label{T-prop3.1}
$\mathcal{C}^\infty(\overline\Omega)\cap\mathcal{Y}_1(\Omega)$ is dense in $\mathcal{Y}_1(\Omega)$.
\end{prop}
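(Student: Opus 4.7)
The strategy is a variant of the classical Friedrichs density argument adapted to variable coefficients: localize $\theta$ by a partition of unity and convolve each local piece with a mollifier whose support has been shifted \emph{outside} $\overline\Omega$ at the adjacent boundary, so that the distributional boundary contributions arising from extending $\theta_j$ by zero do not enter the approximation on $\Omega$. The main new ingredient relative to the constant-coefficient proof of Proposition~\ref{prop1} is a variable-coefficient Friedrichs commutator estimate.

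Cover $\overline\Omega$ by a finite family of open balls $B_j$, each either compactly contained in $\Omega$, meeting only one side of $\partial\Omega$ (away from corners), or containing a single corner, and let $\{\psi_j\}$ be a smooth subordinate partition of unity. By \eqref{S-eq3.e1}, each $\theta_j := \psi_j\theta \in \mathcal{Y}_1(\Omega)$, so it suffices to approximate each $\theta_j$ independently. For $B_j \subset \Omega$ take a centered mollifier. For $B_j$ touching $\partial\Omega$, extend $\theta_j$ by zero to $\tilde\theta_j \in L^2(\mathbb{R}^2)$ and convolve with $\rho_\epsilon^{(j)}$ whose support lies in the complement of $\overline\Omega$ near the adjacent boundary---for instance $\operatorname{supp}\rho_\epsilon^{(j)} \subset \{x' \in [-2\epsilon,-\epsilon],\,|y'|\le\epsilon\}$ if $B_j$ touches only $\Gamma_W$, and $\operatorname{supp}\rho_\epsilon^{(j)} \subset \{x',y' \in [-2\epsilon,-\epsilon]\}$ for the corner $\Gamma_W \cap \Gamma_S$. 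The approximants $\theta_j^{(\epsilon)} := (\rho_\epsilon^{(j)} * \tilde\theta_j)|_{\overline\Omega}$ lie in $\mathcal{C}^\infty(\overline\Omega)$, and $L^2(\Omega)$-convergence $\theta_j^{(\epsilon)} \to \theta_j$ follows from continuity of translation in $L^2$ together with standard mollifier approximation. The point of the outward shift is the cancellation of boundary contributions in $T$: in distributional sense on $\mathbb{R}^2$ one has
\[
 T\tilde\theta_j = \widetilde{T\theta_j} + \lambda\,g\,\delta_{\partial\Omega \cap B_j}
\]
(where $g$ collects the traces of $\theta_j$ on the boundary pieces intersecting $B_j$), and the convolution $\rho_\epsilon^{(j)} * (\lambda g\,\delta_{\partial\Omega \cap B_j})$ is a smooth function whose support, thanks to the outward shift, lies strictly outside $\Omega$ and hence vanishes on $\Omega$. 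Consequently on $\Omega$,
\[
 T\theta_j^{(\epsilon)} - T\theta_j = \bigl(\rho_\epsilon^{(j)} * \widetilde{T\theta_j} - T\theta_j\bigr)\big|_{\Omega} + [T,\rho_\epsilon^{(j)}*]\tilde\theta_j\big|_{\Omega},
\]
and the first term tends to zero in $L^2(\Omega)$ by standard mollification of $\widetilde{T\theta_j} \in L^2(\mathbb{R}^2)$.

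The main technical obstacle is then to control the second term via the variable-coefficient Friedrichs commutator estimate
\[
 \bigl\| [\lambda\partial_x,\, \rho_\epsilon^{(j)}*]\,u \bigr\|_{L^2(\mathbb{R}^2)} \longrightarrow 0 \qquad \text{as } \epsilon \to 0,
\]
for every $u \in L^2(\mathbb{R}^2)$, where $\lambda$ is extended to $\mathcal{C}^1(\mathbb{R}^2)$ with bounded derivatives by \eqref{T-asp3.1}. This is the classical Friedrichs (1944) lemma: one integrates by parts in the defining kernel so as to transfer the $x'$-derivative off of $u$, yielding an integral operator whose kernel satisfies the pointwise bound $|K_\epsilon(x,y;x',y')| \lesssim \|\nabla\lambda\|_{L^\infty} \bigl(|x-x'|\,|\nabla\rho_\epsilon| + \rho_\epsilon\bigr)(x-x',y-y')$, uniformly $L^1$-integrable in each argument; Schur's lemma then gives uniform $L^2$-boundedness, and strong convergence to zero follows from the easy case $u \in \mathcal{C}_c^\infty(\mathbb{R}^2)$ (where the commutator can be computed directly) combined with density.
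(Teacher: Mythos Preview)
Your argument is correct and follows essentially the same route as the paper: a partition of unity reducing to local pieces, extension by zero, convolution with an outward-shifted mollifier so that the boundary distribution $\mu=T\tilde v-\widetilde{Tv}$ is pushed outside $\Omega$, and the Friedrichs commutator lemma to handle the variable coefficient $\lambda$. The only cosmetic differences are that the paper simply cites the Friedrichs lemma rather than sketching its proof, and writes the boundary contribution as an unspecified distribution $\mu$ supported on $\partial\Omega$ rather than your more explicit (if slightly informal) trace expression.
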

\begin{proof}
Using a proper covering of $\Omega$ by sets $\mathcal{O}_0,\mathcal{O}_1,\cdots,\mathcal{O}_N$, we consider a partition of unity subordinated to this covering, $1=\sum_{i=0}^N \psi_i$. Here and again in this section we will use a covering of $\Omega$ consisting of $\mathcal{O}_0$, a relatively compact subset of $\Omega$, and of sets $\mathcal{O}_i$ of one of the following types: $\mathcal{O}_i$ is a ball centered at one of the corners of $\Omega$, which does not intersect the two other sides of $\Omega$; or $\mathcal{O}_i$ is a ball centered on one of the sides of $\Omega$ which does not intersect any of the three other sides of $\Omega$.

If $\theta\in\mathcal{Y}_1(\Omega)$, then $\theta\psi_i\in\mathcal{Y}_1(\Omega)$ by \eqref{S-eq3.e1}, so that we only need to approximate $\theta\psi_i$ by smooth functions. Here the support of $\psi_i$ is contained in the set $\mathcal{O}_i$, and we start with considering the set $\mathcal{O}_0$, relatively compact in $\Omega$, then we consider the balls $\mathcal{O}_i$ centered on the boundary $\partial\Omega$.

We first consider the case where $\psi_i=\psi_0$ and $\mathcal{O}_i =\mathcal{O}_0$ which is relatively compact in $\Omega$.
Let $\rho$ be a mollifier such that $\rho\geq 0, \int\rho=1$, and $\rho$ has compact support. 

$i)$ The function $v=\theta\psi_0\in\mathcal{Y}_1(\Omega)$ has compact support in $\mathcal{O}_0$. Since $\mathcal{O}_0$ is relatively compact in $\Omega$, then for $\epsilon$ small enough, $\rho_\epsilon*v$ is supported in $\Omega$. Then the standard mollifier theory shows that for $\epsilon\rightarrow 0$:
\begin{equation}\label{S-eq05}
\begin{cases}
\rho_\epsilon* \tilde v \rightarrow \tilde v,\hspace{6pt} \text{in }L^2(\mathbb R^2),\\
\rho_\epsilon*\widetilde{T v}\rightarrow \widetilde{T v},\hspace{6pt} \text{in }L^2(\mathbb R^2).
\end{cases}
\end{equation}
Since the convolution and the operator $T$ do not commute in the non-constant coefficient case, we need the following Friedrichs' lemma (see \cite{Fri44} or \cite[Theorem 3.1]{Hor61}).
\begin{lemma}\label{S-lem3.2extra}
Let ${\mathcal U}$ be an open set of $\mathbb{R}^d$.
If $\nabla a\in L^\infty(\mathcal U)$ and $u\in L_{\text{loc}}^2(\mathcal U)$, then for all $1\leq j\leq d$,
\begin{equation}\nonumber
a \partial_{x_j}(u*\rho_\epsilon) - (a\partial_{x_j} u)*\rho_\epsilon \rightarrow 0, \text{ when }\epsilon\rightarrow 0,
\end{equation}
in the sense of $L^2$ convergence on all compact subsets of $\mathcal{U}$.
\end{lemma}
We then continue the proof of Proposition \ref{T-prop3.1}.
Noting that $v=\theta\psi_0$ has compact support in $\Omega$, we apply Lemma \ref{S-lem3.2extra} with $\mathcal{U}=\Omega, a=\lambda$ and $u= v$; we see that, as $\epsilon\rightarrow 0$,  
\begin{equation}\label{S-eq06extra2}
T(\rho_\epsilon* v) - \rho_\epsilon*T v \rightarrow 0,
\end{equation}
in $L^2(K)$ for any compact set $K\subset\Omega$ and thus in $L^2(\Omega)$ since $v$ is compactly supported in $\Omega$.
Combining \eqref{S-eq05}$_2$ and \eqref{S-eq06extra2}, we obtain that as $\epsilon\rightarrow 0$,
\begin{equation}\label{S-eq06}
T(\rho_\epsilon* v) \rightarrow T v,\,\text{in }L^2(\Omega).
\end{equation}
Therefore, $v_\epsilon=(\rho_\epsilon*v)|_{\Omega}$ converges to $v$ in $\mathcal{Y}_1(\Omega)$ by $\eqref{S-eq05}_1$ and \eqref{S-eq06}.

 $ii)$ We then consider the case where $\psi_i=\psi_1$, and $\mathcal{O}_i =\mathcal{O}_1$ which is a ball centered at the origin $(0,0)$; the other cases are similar or simpler. Let $\rho$ be a mollifier as before, but now $\rho$ is compactly supported in $\{x<0, y<0\}$. Then for $v=\theta\psi_1$, we observe that
\begin{equation}\label{S-eq06-7}
T\tilde{v} = \widetilde{T v} + \mu,
\end{equation}
where $\mu$ is a measure supported by $\aiminset{x=0}\cup\aiminset{y=0}$.
Then mollifying \eqref{S-eq06-7} with this $\rho$ gives
\begin{equation}\label{S-eq07}
\rho_\epsilon*(T\tilde v) =\rho_\epsilon*\widetilde{T v} + \rho_\epsilon*\mu.
\end{equation}
By the choice of the support of $\rho$, $\rho_\epsilon*\mu$ is supported outside of $\Omega$. Hence, restricting \eqref{S-eq07} to $\Omega$ implies that:
\begin{equation}\label{S-eq07extra1}
(\rho_\epsilon*(T\tilde v))\big|_{\Omega} =\rho_\epsilon*(\widetilde{T v})\big|_{\Omega} \rightarrow T v, \text{ in }L^2(\Omega),\text{ as }\epsilon\rightarrow 0.
\end{equation}
Applying Lemma \ref{S-lem3.2extra} with $\mathcal{U}=\mathbb{R}^2, a=\lambda$ and $u=\tilde v$, we obtain that as $\epsilon\rightarrow 0$,
\begin{equation}
T(\rho_\epsilon*\tilde v) - \rho_\epsilon*T\tilde v \rightarrow 0,\text{ in }L^2(\Omega),
\end{equation}
which, combined with \eqref{S-eq07extra1}, implies that
\begin{equation}\label{S-eq07extra2}
T(\rho_\epsilon*\tilde v)\big|_{\Omega} \rightarrow T v, \text{ in }L^2(\Omega),\text{ as }\epsilon\rightarrow 0,
\end{equation}
If we set $\tilde v_\epsilon=\rho_\epsilon*\tilde v$, then as $\epsilon\rightarrow 0$, $\tilde v_\epsilon\rightarrow\tilde v$ in $L^2(\mathbb{R}^2)$, and
\begin{equation}\label{S-eq08}
\begin{cases}
\tilde v_\epsilon\big|_\Omega \rightarrow v, \text{ in } L^2(\Omega);\\
T(\tilde v_\epsilon\big|_\Omega)\rightarrow T v, \text{ in } L^2(\Omega),
\end{cases}
\end{equation}
which shows that $\tilde v_\epsilon\big|_\Omega$ converges to $v$ in $\mathcal{Y}_1(\Omega)$.
\end{proof}

We are now going to prove the density theorems involving the boundary $\partial\Omega$. Recall that $T\theta=\lambda(x,y) \theta_x+\theta_y$, and we introduce the function spaces:
\begin{equation}\nonumber
\mathcal{Y}_\Gamma(\Omega)=\{ \theta\in L^2(\Omega), T\theta=\lambda(x,y) \theta_x+\theta_y\in L^2(\Omega), \theta|_\Gamma = 0 \}.
\end{equation}
Using the same arguments as in \cite[Proposition 2.3]{HT12}, we see that for all $i\in\aiminset{W,E,S,N}$, the traces on $\Gamma_i$'s are well defined for the functions belonging to $\mathcal{Y}_\Gamma(\Omega)$ since $\lambda(x,y)$ is positive away from zero. We then state the density theorem:
\begin{thm}\label{T-thm3.1}
Suppose that $\Gamma=\Gamma_W\cup\Gamma_S$ and $\lambda(x,y)$ satisfies \eqref{T-asp3.1} Then 
\begin{equation}\nonumber
\mathcal{V}_\Gamma(\Omega)\cap\mathcal{Y}_\Gamma(\Omega) \text{ is dense in } \mathcal{Y}_\Gamma(\Omega).
\end{equation}
\end{thm}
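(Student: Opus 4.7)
The plan is to mimic the constant-coefficient argument of Theorem \ref{L-thm1}, replacing the rigid translations used there by the flow of the characteristic vector field $(\lambda,1)$. Given $\theta\in\mathcal Y_\Gamma(\Omega)$, I would cover $\overline\Omega$ by a relatively compact open set $\mathcal O_0\subset\Omega$ together with balls centered on the open sides and corners of $\partial\Omega$, take a subordinate smooth partition of unity $\{\psi_i\}$, and use the locality property (the analogue of \eqref{S-eq3.e1}) to reduce the problem to approximating each piece $v=\psi_i\theta$. When $\operatorname{supp} v$ is disjoint from $\Gamma=\Gamma_W\cup\Gamma_S$, Proposition \ref{T-prop3.1} already furnishes a smooth approximant whose support remains in a small neighborhood of $\operatorname{supp} v$, and which therefore lies in $\mathcal V_\Gamma(\Omega)$; the new content is in treating pieces supported near $\Gamma_W$, $\Gamma_S$, or the three corners $(0,0)$, $(0,L_2)$, $(L_1,0)$.

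For such a piece, I would first extend $\lambda$ to a $\mathcal C^1(\mathbb R^2)$ function preserving the bound $\lambda\ge c_0>0$ on a neighborhood of $\overline\Omega$, and let $\Phi^s$ denote the flow on $\mathbb R^2$ of the $\mathcal C^1$ vector field $(\lambda(x,y),1)$; by standard ODE theory $\Phi^s$ is a $\mathcal C^1$ one-parameter family of diffeomorphisms converging uniformly to the identity as $s\to 0$. Extending $v$ by zero to $\tilde v\in L^2(\mathbb R^2)$, set
\begin{equation*}
  v^\eta(x,y):=\tilde v\bigl(\Phi^{-\eta}(x,y)\bigr)\big|_\Omega,\qquad \eta>0.
\end{equation*}
Writing $\Phi^\eta(x_0,y_0)=(X(\eta),Y(\eta))$, the ODE yields $Y(\eta)=y_0+\eta$ and $X(\eta)\ge x_0+c_0\eta$, so $\Phi^\eta(\overline\Omega)\subset\{x\ge c_0\eta,\,y\ge\eta\}$ and $v^\eta$ vanishes in an open neighborhood of $\Gamma_W\cup\Gamma_S$ of width proportional to $\eta$.

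The crucial identity is that $T$ is precisely the Lie derivative along $\Phi^s$, so the chain rule and the group law $\Phi^{-\eta}\circ\Phi^s=\Phi^{s-\eta}$ give, as distributions on $\mathbb R^2$,
\begin{equation*}
  T\bigl(\tilde v\circ\Phi^{-\eta}\bigr)=(T\tilde v)\circ\Phi^{-\eta}.
\end{equation*}
Integration by parts against test functions together with $v|_{\Gamma_W\cup\Gamma_S}=0$ identifies $T\tilde v=\widetilde{Tv}+\mu$, where $\mu$ is a distribution supported in $\Gamma_E\cup\Gamma_N$. Because $\Phi^\eta$ strictly pushes $\Gamma_E\cup\Gamma_N$ into $\{x>L_1\}\cup\{y>L_2\}$, the pullback $\mu\circ\Phi^{-\eta}$ vanishes on $\overline\Omega$, so restricting to $\Omega$ yields $Tv^\eta=\widetilde{Tv}\circ\Phi^{-\eta}|_\Omega\to Tv$ in $L^2(\Omega)$, and likewise $v^\eta\to v$ in $L^2(\Omega)$, both by continuity of composition with a smooth family of diffeomorphisms tending to the identity.

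With $v^\eta$ now vanishing in a fixed open neighborhood of $\Gamma_W\cup\Gamma_S$, the smoothing step is handled by Proposition \ref{T-prop3.1} applied to $v^\eta$, using symmetric mollifiers in the interior and one-sided mollifiers near $\Gamma_E\cup\Gamma_N$ exactly as in its proof; choosing the mollification radius $\epsilon<\tfrac12\min(c_0\eta,\eta)$ ensures the approximants still vanish in a neighborhood of $\Gamma_W\cup\Gamma_S$, so they lie in $\mathcal V_\Gamma(\Omega)\cap\mathcal Y_\Gamma(\Omega)$, and a diagonal extraction $\epsilon=\epsilon(\eta)\to 0$ produces the desired sequence. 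I expect the principal obstacle to be the rigorous justification of the distributional chain rule together with the identification of the residual boundary distribution $\mu$ in the variable-coefficient setting; the cleanest route is to verify these first on the dense subset $\mathcal C^\infty(\overline\Omega)\cap\mathcal Y_\Gamma(\Omega)$ provided by Proposition \ref{T-prop3.1}, where they follow from classical calculus, and then pass to the $L^2$-limit using the uniform boundedness of the pullback operator $u\mapsto u\circ\Phi^{-\eta}$ on $L^2(\mathbb R^2)$.
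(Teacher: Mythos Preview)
Your strategy is sound and would yield a correct proof, but it is organized differently from the paper's. The paper does not construct the characteristic flow $\Phi^s$ at all: it simply transplants the constant-coefficient argument of \cite[Theorem~2.1]{HT12} (Theorem~\ref{L-thm1} here), which already proceeds by partition of unity and one-sided mollification, and inserts Friedrichs' Lemma~\ref{S-lem3.2extra} at every point where one must compare $T(\rho_\epsilon*\tilde v)$ with $\rho_\epsilon*(T\tilde v)$. Since $\theta|_{\Gamma_W\cup\Gamma_S}=0$ kills the boundary measure $\mu$ on those two sides, a mollifier $\rho$ with support pushed into $\{x>0,\,y>0\}$ (respectively into the appropriate half- or quarter-plane for each localized piece) already produces approximants vanishing in a neighborhood of $\Gamma_W\cup\Gamma_S$, with no separate translation step.

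Your route replaces that one-sided mollification by a two-stage procedure: first pull back along the flow $\Phi^{-\eta}$, exploiting the exact commutation $T\circ(\Phi^{-\eta})^*=(\Phi^{-\eta})^*\circ T$, and only then smooth via Proposition~\ref{T-prop3.1}. This is conceptually appealing---the flow is the ``right'' translation for a variable vector field---and it isolates cleanly why the approximants vanish near $\Gamma$. The price is the additional machinery (ODE flow, distributional pullback by a $\mathcal C^1$ diffeomorphism, justification that $\mu$ is supported only on $\Gamma_E\cup\Gamma_N$ via the trace continuity of Proposition~\ref{M-propb.2}), and in the end you still invoke Friedrichs' lemma inside the smoothing step. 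The paper's approach is thus more economical; yours is more geometric and would generalize more transparently to other first-order operators along a nonvanishing vector field.
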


Following the same arguments as in \cite[Theorem 2.1]{HT12} and utilizing Lemma \ref{S-lem3.2extra}, we can obtain Theorem \ref{T-thm3.1}.

\begin{rmk}\label{T-rmk3.2}
Looking carefully at the proof of Theorem \ref{T-thm3.1} (see actually the proof of \cite[Theorem 2.1]{HT12}), we see that  
Theorem \ref{T-thm3.1} is also valid if $\Gamma=\Gamma_E\cup\Gamma_N$. Moreover, if the assumptions on $\lambda(x,y)$ are
\begin{equation}\tag{\ref{T-asp3.1}$'$}\label{T-asp3.1prime}
\begin{cases}
-c_1 \leq \lambda(x,y) \leq -c_0, \\
\aiminabs{\lambda_x} \leq M,
\end{cases}
\end{equation}
where $c_0,c_1,M$ are positive constants, then Theorem \ref{T-thm3.1} is still true if $\Gamma$ is $\Gamma_W\cup\Gamma_N$ or $\Gamma_E\cup\Gamma_S$.
\end{rmk}

\section{Preliminary results about semigroups}\label{sec-semigroup}
In this appendix we collects some basic facts on the semigroups and the characterization of their generators, and also prove some useful results about the semigroups on Hilbert space. The main references are the classical books by K. Yosida \cite{Yos80}, and by E. Hille and R.S. Phillips \cite{HP74}, and by A. Pazy \cite{Paz83} and the book by K.-J. Engel and R. Nagel \cite{EN00}.
\begin{defn}
A family $(S(t))_{t\geq 0}$ of bounded linear operators on a Banach space $X$ is called a strongly continuous (one-parameter) semigroup (or $\mathcal{C}_0$-semigroup) if it satisfies 
\begin{enumerate}[i)]
\item $S(0)=I,S(t+s)=S(t)S(s)$ for all $t,s\geq 0$;
\item $\xi_x : t\mapsto \xi_x(t):=S(t)x$ is continuous from $\mathbb{R}_+$ into $X$ for every $x\in X$.
\end{enumerate}
\end{defn}

\begin{prop}\label{S-propa.1}
For every strongly continuous semigroup $(S(t))_{t\geq 0}$, there exist constants $\omega\in\mathbb{R}$ and $M\geq 1$ such that
\begin{equation}\label{S-eqa.1}
\aiminnorm{S(t)}\leq Me^{\omega t}
\end{equation}
for all $t\geq 0$.
\end{prop}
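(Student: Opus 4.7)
The plan is to proceed in the classical two-step fashion: first establish a uniform bound for $\aiminnorm{S(t)}$ on a small interval $[0,\delta]$ near the origin, and then bootstrap this local bound to an exponential bound on all of $\mathbb{R}_+$ via the semigroup property.

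For the local step, the main tool will be the uniform boundedness principle (Banach-Steinhaus). First I would show that $\limsup_{t\to 0^+}\aiminnorm{S(t)}<\infty$. Suppose for contradiction that this fails, so there is a sequence $t_n\downarrow 0$ with $\aiminnorm{S(t_n)}\to\infty$. By strong continuity of the semigroup, for every fixed $x\in X$ we have $S(t_n)x\to S(0)x=x$, and in particular the set $\{S(t_n)x\}_{n\in\mathbb{N}}$ is bounded. The Banach-Steinhaus theorem then forces $\sup_n\aiminnorm{S(t_n)}<\infty$, contradicting the assumption. Hence there exist $\delta>0$ and $M\geq 1$ (taking $M$ at least $1$ since $\aiminnorm{S(0)}=\aiminnorm{I}=1$) such that $\aiminnorm{S(t)}\leq M$ for all $t\in[0,\delta]$.

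For the global step, given any $t\geq 0$, I would write $t=n\delta+r$ with $n\in\mathbb{N}$ and $r\in[0,\delta)$. Iterating the semigroup identity $S(t+s)=S(t)S(s)$ yields
\begin{equation*}
S(t)=S(r)\,S(\delta)^n,
\end{equation*}
and consequently $\aiminnorm{S(t)}\leq \aiminnorm{S(r)}\,\aiminnorm{S(\delta)}^n\leq M\cdot M^n = M^{n+1}$. Setting $\omega:=\delta^{-1}\log M$ (and $\omega=0$ if $M=1$), we get $M^{n+1}=M\,e^{\omega n\delta}\leq M\,e^{\omega t}$, which gives the desired bound $\aiminnorm{S(t)}\leq M\,e^{\omega t}$ for all $t\geq 0$.

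The only genuinely nontrivial step is the local boundedness near $t=0$; the extension to all $t\geq 0$ is a purely algebraic consequence of the semigroup law. The application of Banach-Steinhaus to rule out blow-up as $t\downarrow 0$ is the essential use of the completeness of $X$ and of strong continuity together; no further properties of $(S(t))_{t\geq 0}$ are required.
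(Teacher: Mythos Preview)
Your proof is the standard textbook argument and is correct. Note that the paper does not actually supply a proof of this proposition: it is stated as one of the ``basic facts on the semigroups'' collected from the classical references (Yosida, Hille--Phillips, Pazy, Engel--Nagel), so there is nothing to compare against beyond observing that your argument is precisely the one found in those sources.
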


\begin{defn}\label{S-defn2.2}
A strongly continuous semigroup is called a \emph{quasi-contraction} if we can take $M=1$ in \eqref{S-eqa.1}, and  called \emph{bounded} if $\omega=0$, and called \emph{contraction} if $\omega=0$ and $M=1$ is possible. 
\end{defn}

\begin{defn}
The generator $A:\mathcal{D}(A)\subset X\mapsto X$ of a strongly continuous semigroup $(S(t))_{t\geq 0}$ on a Banach space $X$ is the operator
\begin{equation}\nonumber
Ax:=\dot{\xi}_x(0)=\lim_{h\downarrow 0}\frac{1}{h}(S(h)x -x)
\end{equation}
defined for every $x$ in its domain
\begin{equation}\nonumber
\mathcal{D}(A):=\{x\in X\,:\,t\mapsto\xi_x(t) \text{ is right differentiable in }t\text{ at }t=0\}.
\end{equation}
\end{defn}
Note that if $\xi_x(t)$ is right differentiable in $t$ at $t=0$, it is differentiable at $t$ for any $t>0$.

In the following, we only consider the case when the Banach space $X$ is a Hilbert space $H$.

\begin{defn}
A linear operator $(A,\mathcal{D}(A))$ on a Hilbert space $H$ is called \emph{positive} if
\begin{equation}
\aimininner{Ax}{x}\geq 0,
\end{equation}
for all $x\in\mathcal{D}(A)$.
\end{defn}

\begin{thm}[Hille-Yosida theorem]\label{S-thm2.1}
Let $(A,\mathcal{D}(A))$ be a closed, densely defined operator on a Hilbert space $H$. If $A$ is positive and
the operator $\omega+A$ is surjective for some (hence for all) $\omega>0$. Then $-A$ generates a contraction semigroup.
\end{thm}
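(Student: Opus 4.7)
The plan is to verify the hypotheses of the classical generation theorem for $-A$ by establishing the resolvent estimate $\|(\omega I + A)^{-1}\| \leq 1/\omega$ for every $\omega > 0$. The whole argument rests on combining the positivity of $A$ with the single surjectivity assumption, so the logical backbone is: positivity gives a lower bound hence injectivity and a bounded inverse on the range; surjectivity at one point gives bijectivity there; a Neumann-series bootstrap extends bijectivity to the whole half-line $\omega > 0$.

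First I would exploit positivity through the expansion
\begin{equation*}
\aiminnorm{(\omega I + A)x}_H^2 = \omega^2\aiminnorm{x}_H^2 + 2\omega \aimininner{Ax}{x}_H + \aiminnorm{Ax}_H^2 \geq \omega^2 \aiminnorm{x}_H^2,
\end{equation*}
valid for every $x \in \mathcal D(A)$ and every $\omega > 0$. This yields the dissipative estimate $\aiminnorm{(\omega I + A)x}_H \geq \omega \aiminnorm{x}_H$, which shows that $\omega I + A$ is injective for all $\omega > 0$ and that its inverse (wherever defined) has norm at most $1/\omega$. Combining this with the hypothesis that $\omega_0 I + A$ is surjective for some $\omega_0 > 0$, we obtain that $\omega_0 I + A$ is bijective from $\mathcal D(A)$ onto $H$, with $(\omega_0 I + A)^{-1} \in \mathcal L(H)$ and $\aiminnorm{(\omega_0 I + A)^{-1}} \leq 1/\omega_0$.

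Next, I would promote this one-point invertibility to the whole half-line. For $\omega > 0$, write
\begin{equation*}
\omega I + A = (\omega_0 I + A) + (\omega - \omega_0)I = (\omega_0 I + A)\bigl(I + (\omega - \omega_0)(\omega_0 I + A)^{-1}\bigr).
\end{equation*}
The second factor is invertible by a Neumann series whenever $|\omega - \omega_0|\cdot \aiminnorm{(\omega_0 I + A)^{-1}} < 1$, which holds at least on the interval $(0, 2\omega_0)$. At each successfully treated point one again has the bound $\aiminnorm{(\omega I + A)^{-1}} \leq 1/\omega$ (from the first step, which applies as soon as the inverse is known to exist), so iterating the construction starting from any point of the new interval covers $\omega > 0$ in finitely many steps. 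This justifies the "some, hence for all" clause.

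Finally, with $-A$ closed, densely defined, and $\rho(-A) \supset (-\infty, 0)$ in the sense that $(\omega I + A)^{-1}$ exists for all $\omega > 0$ with $\aiminnorm{(\omega I + A)^{-1}} \leq 1/\omega$, the classical Hille-Yosida generation theorem (see e.g.\ \cite{Paz83, Yos80}) applies to $-A$ and produces a strongly continuous semigroup $(S(t))_{t \geq 0}$ on $H$ with $\aiminnorm{S(t)} \leq 1$, that is, a contraction semigroup. I do not anticipate a genuine obstacle here: the positivity hypothesis already does the heavy lifting that in the general Banach space setting requires a more delicate duality (Lumer--Phillips) argument, and the bootstrap step is routine once the bound on the resolvent norm is in hand.
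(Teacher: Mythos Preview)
The paper does not actually prove this theorem: it is stated in Appendix~\ref{sec-semigroup} as a classical result under the name ``Hille--Yosida theorem'' and left without proof, the reader being referred to the standard monographs \cite{Yos80,HP74,Paz83,EN00}. So there is no paper proof to compare against.

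Your argument is correct and is essentially the standard textbook route. The positivity identity you expand is exactly what makes $-A$ dissipative in the Hilbert space sense, and your Neumann-series continuation from the single surjective point $\omega_0$ to all $\omega>0$ is the usual way to justify the parenthetical ``hence for all'' in the statement. One small remark: your iteration does cover $(0,\infty)$, but it is worth saying explicitly why the doubling $(0,2\omega_0)\to(0,2\omega_1)\to\cdots$ reaches arbitrarily large $\omega$ (it does, since at each step the new center can be taken arbitrarily close to the current right endpoint). Alternatively, one can observe that the set of admissible $\omega$ is open by your Neumann argument and nonempty, and then note that the uniform resolvent bound $\|(\omega I+A)^{-1}\|\le 1/\omega$ forces it to be closed in $(0,\infty)$ as well, hence equal to $(0,\infty)$ by connectedness. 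Either way the conclusion stands.
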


\begin{thm}[Hille-Phillips-Yosida theorem]\label{S-thm2.2}
Let $(A,\mathcal{D}(A))$ be a closed, densely defined operator on a Hilbert space $H$. If both $A$ and its adjoint $A^*$ are positive, then $-A$ generates a contraction semigroup on $H$.
\end{thm}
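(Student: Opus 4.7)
The plan is to reduce Theorem~\ref{S-thm2.2} to the Hille--Yosida Theorem~\ref{S-thm2.1}. Since $(A,\mathcal{D}(A))$ is assumed closed, densely defined, and positive, what remains is to establish that the operator $\omega + A$ is surjective onto $H$ for some $\omega > 0$; I will in fact carry out the argument for every $\omega > 0$. I would split the surjectivity into two independent facts: $\mathrm{Range}(\omega + A)$ is closed, and $\mathrm{Range}(\omega + A)$ is dense in $H$.

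For the closedness, the positivity of $A$ immediately gives the a priori lower bound
\begin{equation*}
\aiminnorm{(\omega + A)x}_H^2 = \omega^2\aiminnorm{x}_H^2 + 2\omega\,\aimininner{Ax}{x}_H + \aiminnorm{Ax}_H^2 \geq \omega^2\,\aiminnorm{x}_H^2, \quad \forall\,x\in\mathcal{D}(A).
\end{equation*}
Hence if $(\omega + A)x_n \to y$ in $H$, the sequence $(x_n)$ is Cauchy and converges to some $x \in H$, while $Ax_n = (\omega+A)x_n - \omega x_n$ converges as well. Closedness of $A$ then places $x$ in $\mathcal{D}(A)$ and yields $(\omega + A)x = y$, so $\mathrm{Range}(\omega + A)$ is closed in $H$.

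For density, I would use a duality argument: suppose $z \in H$ is orthogonal to $\mathrm{Range}(\omega + A)$, i.e.\ $\aimininner{(\omega + A)x}{z}_H = 0$ for every $x\in\mathcal{D}(A)$. Since $\mathcal D(A)$ is dense, this says exactly that $z \in \mathcal{D}(A^*)$ with $(\omega + A^*)z = 0$. Invoking the positivity of $A^*$ (the hypothesis not yet used) gives
\begin{equation*}
\omega\,\aiminnorm{z}_H^2 = -\aimininner{A^* z}{z}_H \leq 0,
\end{equation*}
and thus $z = 0$. Combining the two steps, $\mathrm{Range}(\omega + A) = H$ for every $\omega > 0$, and Theorem~\ref{S-thm2.1} then furnishes the conclusion that $-A$ is the infinitesimal generator of a contraction semigroup on $H$.

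The only delicate point is the density step: it requires both the density of $\mathcal{D}(A)$, which is needed to define $A^*$ unambiguously, and the positivity of $A^*$, which is precisely where the hypothesis on the adjoint enters. Without assuming positivity of $A^*$, the closed range alone would not rule out a non-trivial cokernel, so one would be unable to close the proof through Hille--Yosida; this is the genuine obstacle that the symmetric hypothesis on $A$ and $A^*$ is designed to remove.
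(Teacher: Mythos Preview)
Your argument is correct and is precisely the standard reduction of the Hille--Phillips--Yosida criterion to the Hille--Yosida theorem: positivity of $A$ gives the a priori estimate $\aiminnorm{(\omega+A)x}\geq\omega\aiminnorm{x}$, from which closedness of the range follows via closedness of $A$, and positivity of $A^*$ forces the orthogonal complement of the range to be trivial.

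As for comparison with the paper: the paper does not supply its own proof of this theorem. It is recorded in Appendix~\ref{sec-semigroup} as a classical result, attributed to the references \cite{Yos80,HP74,Paz83,EN00}, alongside the other foundational semigroup theorems that are merely quoted. So there is nothing to compare against; your proof simply fills in what the paper leaves to the literature, and it does so along the lines one finds in those references.
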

We remark that if $(A,\mathcal{D}(A))$ is a closed, densely defined operator on $H$, then its adjoint $A^*$ is also closed, densely defined.

\begin{prop}\label{S-propa.3}
Let $(A,\mathcal{D}(A))$ be a closed, densely defined operator on a Hilbert space $H$, and assume that 
$-A$ generates a contraction semigroup on $H$. Then both $A$ and its adjoint $A^*$ are positive.
\end{prop}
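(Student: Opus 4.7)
The plan is to differentiate the squared norm $\|S(t)x\|_H^2$ at $t=0$ for $x\in\mathcal{D}(A)$, combine with the contraction property to obtain a sign, and then handle $A^*$ by passing to the adjoint semigroup.

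First I would fix $x\in\mathcal{D}(A)$ and set $\phi(t):=\aimininner{S(t)x}{S(t)x}_H=\aiminnorm{S(t)x}_H^2$. Since $(S(t))_{t\geq 0}$ is a contraction semigroup, $\phi(t)\leq \phi(0)$ for all $t\geq 0$, so the right derivative satisfies $\phi'(0^+)\leq 0$. Because $x\in\mathcal{D}(A)$, the orbit $t\mapsto S(t)x$ is differentiable at $t=0$ with derivative $-Ax$; an elementary computation gives $\phi'(0^+)=2\aimininner{-Ax}{x}_H=-2\aimininner{Ax}{x}_H$. Combining these two facts yields $\aimininner{Ax}{x}_H\geq 0$, which is exactly the positivity of $A$.

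For the adjoint, I would invoke the classical fact (see e.g.\ \cite{Paz83}) that on a Hilbert space the adjoint family $(S(t)^*)_{t\geq 0}$ is again a strongly continuous semigroup, and that its infinitesimal generator is precisely $-A^*$ with domain $\mathcal{D}(A^*)$. Each $S(t)^*$ is a contraction since $\aiminnorm{S(t)^*}_{\mathcal L(H)}=\aiminnorm{S(t)}_{\mathcal L(H)}\leq 1$, so $(S(t)^*)_{t\geq 0}$ is itself a contraction semigroup. Applying the argument of the previous paragraph to this semigroup and to any $y\in\mathcal{D}(A^*)$ then yields $\aimininner{A^*y}{y}_H\geq 0$.

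The only nontrivial step is invoking that the adjoint of a contraction $\mathcal{C}_0$-semigroup on $H$ is again a contraction $\mathcal{C}_0$-semigroup with generator $-A^*$; this is where reflexivity of Hilbert spaces is used (the analogous statement can fail on general Banach spaces). Everything else is a one-line differentiation argument, so I would expect the writeup to be short and to rely on the cited semigroup references rather than on further computation.
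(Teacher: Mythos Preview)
Your argument is correct and is the standard proof: differentiating $\aiminnorm{S(t)x}_H^2$ at $t=0$ gives dissipativity of $-A$ (hence positivity of $A$), and passing to the adjoint semigroup on a Hilbert space handles $A^*$. The paper does not actually supply a proof of this proposition; it simply refers the reader to \cite[p.~88]{EN00}, where precisely this Lumer--Phillips-type argument appears, so your approach coincides with the cited one.
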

The proof of Proposition \ref{S-propa.3} can be found in \cite[pp. 88]{EN00}.

\begin{thm}[Bounded Perturbation Theorem]\label{S-thm2.3}
Let $(A,\mathcal{D}(A))$ be the infinitesimal generator of a strongly continuous semigroup $(S(t))_{t\geq 0}$ on a Banach space $X$ satisfying
$$\aiminnorm{S(t)}\leq M_0 e^{\omega t},\,\forall\, t\geq 0, $$
where $\omega\in \mathbb{R}, M_0\geq 1$. If $B\in \mathcal{L}(X)$, then
$C:=A+B$, with $\mathcal{D}(C):=\mathcal{D}(A)$
generates a strongly continuous semigroup $(R(t))_{t\geq 0}$ satisfying
\begin{equation}\nonumber
\aiminnorm{R(t)}\leq M_0 e^{(\omega + M_0\aiminnorm{B})t},\,\forall\, t\geq 0.
\end{equation}
\end{thm}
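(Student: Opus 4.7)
The plan is to construct the perturbed semigroup $(R(t))_{t\geq 0}$ explicitly via the Dyson (variation-of-parameters) series associated with the integral equation
\begin{equation*}
R(t)x = S(t)x + \int_0^t S(t-s)\,B\,R(s)x\,ds, \qquad x\in X,\ t\geq 0,
\end{equation*}
and then verify each of the semigroup axioms together with the identification of the generator. Because $B$ is everywhere defined and bounded, one can iterate with $R_0(t):=S(t)$ and
\begin{equation*}
R_{n+1}(t)x := \int_0^t S(t-s)\,B\,R_n(s)x\,ds, \qquad n\geq 0,
\end{equation*}
and set $R(t):=\sum_{n=0}^\infty R_n(t)$. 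The first step of the proof will be to show, by induction on $n$ and the hypothesis $\|S(t)\|\leq M_0 e^{\omega t}$, the pointwise estimate
\begin{equation*}
\|R_n(t)\| \;\leq\; M_0 e^{\omega t}\,\frac{(M_0\|B\|\,t)^n}{n!}.
\end{equation*}
Summing this over $n$ gives absolute convergence in $\mathcal{L}(X)$ uniformly on compact $t$-intervals and immediately yields the desired bound $\|R(t)\|\leq M_0 e^{(\omega+M_0\|B\|)t}$.

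Next I would verify that $(R(t))_{t\geq 0}$ is a strongly continuous semigroup. Strong continuity follows from the strong continuity of $(S(t))$, uniform convergence of the series on compact intervals, and the dominated convergence theorem applied to the defining integral. The semigroup property $R(t+s)=R(t)R(s)$ can be checked either by a direct (somewhat tedious) rearrangement of the Dyson expansion, or, more cleanly, by showing that for each fixed $s\geq 0$ the two functions $t\mapsto R(t+s)x$ and $t\mapsto R(t)R(s)x$ both satisfy the same Volterra integral equation on $[0,\infty)$ and therefore coincide by uniqueness (which itself follows from the Gronwall-type bound that arises from iterating the kernel).

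The step I expect to be the main (though still routine) obstacle is the identification of the generator as $C = A+B$ with $\mathcal{D}(C)=\mathcal{D}(A)$. For $x\in\mathcal{D}(A)$ I would differentiate the integral equation at $t=0$: since $S(t)x$ is differentiable at $0$ with derivative $-Ax$ (note the paper's sign convention that $-A$ generates $S(t)$; I would adjust signs consistently, so that in the notation of the statement the generator of $S$ is $A$ itself with $\lim_{h\downarrow 0}(S(h)x-x)/h = Ax$), and the integral term $h^{-1}\int_0^h S(h-s)BR(s)x\,ds$ tends to $Bx$ by strong continuity, one obtains that the right derivative of $R(t)x$ at $0$ exists and equals $(A+B)x$. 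This shows $\mathcal{D}(A)\subset \mathcal{D}(C)$ and $C|_{\mathcal{D}(A)} = A+B$. For the reverse inclusion, I would use that $A$ is closed together with the integral equation and the boundedness of $B$ to conclude that any $x$ for which $R(t)x$ is right-differentiable at $0$ must already lie in $\mathcal{D}(A)$: writing $(S(h)x - x)/h = (R(h)x-x)/h - h^{-1}\int_0^h S(h-s)BR(s)x\,ds$ and letting $h\downarrow 0$, the left-hand side converges iff $x\in\mathcal{D}(A)$, and this completes the identification.

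A cleaner alternative, which I would mention as a remark, is to deduce the result from the Hille--Yosida-type resolvent characterisation: for $\lambda$ with $\mathrm{Re}\,\lambda$ sufficiently large, the resolvent of $A+B$ is given by the convergent Neumann series $R(\lambda,A+B) = R(\lambda,A)\sum_{n=0}^\infty (BR(\lambda,A))^n$, and then the required bounds on all powers of this resolvent follow from the bounds on powers of $R(\lambda,A)$. Either route yields the conclusion of Theorem~\ref{S-thm2.3}.
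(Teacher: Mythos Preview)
Your argument via the Dyson/variation-of-parameters series is correct and is exactly the standard textbook proof (see e.g.\ \cite{EN00} or \cite{Paz83}). Note, however, that the paper does not actually prove Theorem~\ref{S-thm2.3}: in Appendix~\ref{sec-semigroup} it is merely \emph{stated} as a known result imported from the semigroup literature, alongside the Hille--Yosida and Hille--Phillips--Yosida theorems, and then used as a black box in the proofs of Theorems~\ref{S-thm2.4}, \ref{S-thm2.5} and in Subsection~\ref{subsec3.3}. So there is no ``paper's own proof'' to compare against here.

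One small remark on your write-up: in the statement of Theorem~\ref{S-thm2.3} the operator $A$ itself (not $-A$) is the generator of $(S(t))_{t\ge 0}$, so your parenthetical about adjusting signs is unnecessary and slightly confusing; the derivative of $S(t)x$ at $t=0$ is simply $Ax$, and the computation goes through cleanly as you wrote it.
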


\begin{thm}\label{S-thm2.4}
Let $(A,\mathcal{D}(A))$ be a closed, densely defined operator on a Hilbert space $H$. If $A$ is quasi-positive in the sense that
\[
\aimininner{ \mathcal Au}{u}_H \geq -\omega_0\aiminnorm{u}_H^2,\hspace{6pt}\forall\, u\in\mathcal{D}(\mathcal A),
\]
for some $\omega_0>0$, and the operator $\omega+A$ is surjective for some (hence for all) $\omega>\omega_0$. Then $-A$ generates a quasi-contraction semigroup on $H$.
\end{thm}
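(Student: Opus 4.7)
The plan is to reduce Theorem~\ref{S-thm2.4} to the Hille-Yosida theorem (Theorem~\ref{S-thm2.1}) by a spectral translation. Concretely, I would introduce the shifted operator $B := A + \omega_0 I$ with $\mathcal{D}(B) := \mathcal{D}(A)$. Since adding a bounded multiple of the identity preserves closedness and the density of the domain, $(B,\mathcal{D}(B))$ is closed and densely defined on $H$.

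Next I would verify that $B$ meets the two hypotheses of Hille-Yosida. For positivity, the quasi-positivity of $A$ gives, for every $u\in\mathcal{D}(A)$,
\[
\aimininner{Bu}{u}_H = \aimininner{Au}{u}_H + \omega_0\aiminnorm{u}_H^2 \geq 0.
\]
For surjectivity of $\omega + B$ with some $\omega > 0$, I observe $\omega + B = (\omega+\omega_0) + A$, and since $\omega + \omega_0 > \omega_0$, the hypothesis on $A$ applies and yields that this operator is onto. Hence Theorem~\ref{S-thm2.1} produces a contraction semigroup $(T(t))_{t\geq 0}$ on $H$ generated by $-B$.

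To recover $-A$, I would set
\[
S(t) := e^{\omega_0 t}\,T(t), \quad t\geq 0.
\]
It is immediate from $S(0)=I$, the semigroup property of $T(\cdot)$, and the exponential that $(S(t))_{t\geq 0}$ is a strongly continuous semigroup. A direct computation of the difference quotient at $t=0$ shows that its infinitesimal generator is $\omega_0 I + (-B) = \omega_0 I - B = -A$ with domain $\mathcal{D}(A)$. Finally, since $\aiminnorm{T(t)}\leq 1$,
\[
\aiminnorm{S(t)} \leq e^{\omega_0 t}\aiminnorm{T(t)} \leq e^{\omega_0 t},
\]
so $(S(t))_{t\geq 0}$ is a quasi-contraction semigroup in the sense of Definition~\ref{S-defn2.2}, and $-A$ is its generator.

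There is no deep obstacle here; the translation argument is standard once the right shift constant $\omega_0$ is isolated. The only point requiring minor care is checking that multiplying $T(t)$ by $e^{\omega_0 t}$ produces a semigroup whose generator is exactly $\omega_0 I - B$ on $\mathcal{D}(A)$, not merely on a subspace; this follows from the elementary identity $\tfrac{1}{h}(S(h)u-u) = e^{\omega_0 h}\tfrac{1}{h}(T(h)u-u) + \tfrac{e^{\omega_0 h}-1}{h}u$ and passage to the limit $h\downarrow 0$. As an alternative route, one could invoke the Bounded Perturbation Theorem (Theorem~\ref{S-thm2.3}) with perturbation $\omega_0 I$ to obtain strong continuity of the semigroup generated by $-A$, and then extract the quasi-contraction bound from the constants produced there.
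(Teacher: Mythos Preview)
Your proof is correct and follows essentially the same approach as the paper: shift to $B=A+\omega_0$, apply Theorem~\ref{S-thm2.1} to obtain a contraction semigroup for $-B$, and then undo the shift. The paper carries out the last step by invoking the Bounded Perturbation Theorem~\ref{S-thm2.3} (which you mention as an alternative), whereas your primary route is the explicit rescaling $S(t)=e^{\omega_0 t}T(t)$; both are standard and equivalent here.
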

\begin{proof}
We set $B=\omega_0+A$, with $\mathcal{D}(B)=\mathcal{D}(A)$, then $B$ is a positive operator such that $\omega+B$ is surjective for some (hence for all) $\omega>0$. Therefore, Theorem \ref{S-thm2.1} implies that the operator $-B$ generates a contraction semigroup on $H$, and we conclude the result by using the Bounded Perturbation Theorem \ref{S-thm2.3}.
\end{proof}

\begin{thm}\label{S-thm2.5}
Let $(A,\mathcal{D}(A))$ be a closed, densely defined operator on a Hilbert space $H$. If both $A$ and its adjoint $A^*$ are quasi-positive in the sense that there exists a constant $\omega_0>0$ such that
\begin{equation}\label{T-eqb.1}
\begin{cases}
\aimininner{ \mathcal Au}{u}_H \geq -\omega_0\aiminnorm{u}_H^2,\hspace{6pt}\forall\, u\in\mathcal{D}(\mathcal A),\\
\aimininner{ \mathcal A^*u}{u}_H \geq -\omega_0\aiminnorm{u}_H^2,\hspace{6pt}\forall\, u\in\mathcal{D}(\mathcal A^*).
\end{cases}
\end{equation}
Then $-A$ generates a quasi-contraction semigroup on $H$.
\end{thm}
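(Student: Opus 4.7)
The plan is to reduce Theorem \ref{S-thm2.5} to the contraction case by the standard shift trick, and then add the shift back via bounded perturbation. Set $B := A + \omega_0 I$ with $\mathcal D(B) := \mathcal D(A)$. Since $\omega_0 I$ is bounded and everywhere defined, $B$ inherits closedness and the density of its domain from $A$. Moreover the adjoint of a bounded perturbation satisfies $B^* = A^* + \omega_0 I$ with $\mathcal D(B^*) = \mathcal D(A^*)$.

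Next I would verify the hypotheses of Theorem \ref{S-thm2.2} (Hille-Phillips-Yosida) for $B$. Directly from \eqref{T-eqb.1},
\begin{equation*}
\aimininner{Bu}{u}_H = \aimininner{Au}{u}_H + \omega_0\aiminnorm{u}_H^2 \geq 0,\quad \forall\, u\in\mathcal D(B),
\end{equation*}
and analogously $\aimininner{B^*u}{u}_H \geq 0$ for all $u\in\mathcal D(B^*)$. So both $B$ and $B^*$ are positive, $B$ is closed and densely defined, and Theorem \ref{S-thm2.2} yields that $-B$ is the infinitesimal generator of a contraction semigroup $(T(t))_{t\geq 0}$ on $H$, i.e. $\aiminnorm{T(t)} \leq 1$ for every $t\geq 0$.

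Finally, observe that $-A = -B + \omega_0 I$ where $\omega_0 I\in\mathcal L(H)$ has norm $\omega_0$. Applying the Bounded Perturbation Theorem \ref{S-thm2.3} with $M_0=1$, $\omega = 0$, and perturbation $\omega_0 I$ on $\mathcal D(-A) = \mathcal D(-B)$, we conclude that $-A$ generates a strongly continuous semigroup $(S(t))_{t\geq 0}$ satisfying
\begin{equation*}
\aiminnorm{S(t)} \leq e^{\omega_0 t},\quad \forall\, t\geq 0,
\end{equation*}
which is precisely a quasi-contraction semigroup in the sense of Definition \ref{S-defn2.2} (with $M=1$ and $\omega = \omega_0$).

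I do not expect any serious obstacle here; the proof is almost entirely bookkeeping, since all three ingredients (shift, Hille-Phillips-Yosida, bounded perturbation) are available from earlier in the appendix. The only subtlety worth stating explicitly is the identity $B^* = A^* + \omega_0 I$ on $\mathcal D(A^*)$, which is used to carry the quasi-positivity assumption on $A^*$ over to the positivity of $B^*$; this is standard for closed densely defined operators perturbed by a bounded self-adjoint multiple of the identity.
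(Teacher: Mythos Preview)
Your proof is correct and follows exactly the same approach as the paper: shift by $\omega_0$ to define $B=A+\omega_0$, note $B$ and $B^*$ are positive, invoke Theorem~\ref{S-thm2.2} for $-B$, and recover $-A$ via the Bounded Perturbation Theorem~\ref{S-thm2.3}. Your write-up is in fact slightly more detailed than the paper's, which leaves the identity $B^*=A^*+\omega_0$ and the final bound $\|S(t)\|\le e^{\omega_0 t}$ implicit.
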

\begin{proof}
We set $B=\omega_0+A$, with $\mathcal{D}(B)=\mathcal{D}(A)$, and the adjoint $B^*=\omega_0+A^*$, with $\mathcal{D}(B^*)=\mathcal{D}(A^*)$. Then both $B$ and $B^*$ are positive operators on $H$ by virtue of \eqref{T-eqb.1}. It is clear that both $B$ is also a closed, densely defined operator. Therefore, Theorem \ref{S-thm2.2} implies that the operator $-B$ generates a contraction semigroup on $H$, and we conclude the result by using the Bounded Perturbation Theorem \ref{S-thm2.3}.
\end{proof}

\begin{prop}\label{S-prop2.6}
Let $(A,\mathcal{D}(A))$ be a closed, densely defined operator on a Hilbert space $H$. If $-A$ generates a quasi-contraction semigroup $(S(t))_{t\geq 0}$ on $H$, i.e. $\aiminnorm{S(t)}\leq e^{\omega_0t}$ for some $\omega_0>0$.
Then both $A$ and its adjoint $A^*$ are quasi-positive in the sense that
\begin{equation}\label{T-eqb.2}
\begin{cases}
\aimininner{ \mathcal Au}{u}_H \geq -\omega_0\aiminnorm{u}_H^2,\hspace{6pt}\forall\, u\in\mathcal{D}(\mathcal A),\\
\aimininner{ \mathcal A^*u}{u}_H \geq -\omega_0\aiminnorm{u}_H^2,\hspace{6pt}\forall\, u\in\mathcal{D}(\mathcal A^*).
\end{cases}
\end{equation}
\end{prop}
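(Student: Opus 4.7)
The plan is to reduce the quasi-contraction case to the contraction case by rescaling the semigroup, and then invoke Proposition \ref{S-propa.3}. Specifically, set
\[
T(t) := e^{-\omega_0 t} S(t), \qquad t\geq 0.
\]
Since $(S(t))_{t\geq 0}$ is a strongly continuous semigroup satisfying $\aiminnorm{S(t)}\leq e^{\omega_0 t}$, the family $(T(t))_{t\geq 0}$ is immediately seen to be a strongly continuous semigroup on $H$ with $\aiminnorm{T(t)}\leq 1$, that is, a contraction semigroup.

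The next step is to identify the infinitesimal generator $B$ of $(T(t))_{t\geq 0}$. For any $u\in \mathcal D(A)$, the right derivative at $t=0$ of $T(t)u$ exists by the product rule applied to the scalar factor $e^{-\omega_0 t}$ and the vector-valued map $t\mapsto S(t)u$, and equals $-\omega_0 u + (-A)u = -(A+\omega_0 I)u$. Conversely, if the right derivative of $T(t)u$ exists at $t=0$, then so does that of $S(t)u = e^{\omega_0 t}T(t)u$. Therefore $\mathcal D(B) = \mathcal D(A)$ and $B = -(A+\omega_0 I)$. In particular, $-(A+\omega_0 I)$ is the generator of the contraction semigroup $(T(t))_{t\geq 0}$.

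Applying Proposition \ref{S-propa.3} to the closed, densely defined operator $A+\omega_0 I$ (closedness and dense domain follow from the corresponding properties of $A$, since $\omega_0 I$ is bounded), we conclude that both $A+\omega_0 I$ and its adjoint $(A+\omega_0 I)^*$ are positive. Since $\omega_0 I$ is bounded and self-adjoint, the adjoint satisfies $(A+\omega_0 I)^* = A^* + \omega_0 I$ with $\mathcal D((A+\omega_0 I)^*) = \mathcal D(A^*)$. Translating the two positivity inequalities back gives exactly
\[
\aimininner{Au}{u}_H \geq -\omega_0\aiminnorm{u}_H^2,\quad \forall\,u\in\mathcal D(A), \qquad
\aimininner{A^*u}{u}_H \geq -\omega_0\aiminnorm{u}_H^2,\quad \forall\,u\in\mathcal D(A^*),
\]
which is \eqref{T-eqb.2}. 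The only slightly delicate points are the verification that $B=-(A+\omega_0 I)$ on the same domain as $A$ and the identification $(A+\omega_0 I)^* = A^* + \omega_0 I$; both are standard facts for bounded perturbations, so no genuine obstacle is expected.
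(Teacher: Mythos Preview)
Your proof is correct and follows essentially the same approach as the paper: rescale the semigroup by $e^{-\omega_0 t}$ to obtain a contraction semigroup with generator $-(A+\omega_0 I)$, invoke Proposition~\ref{S-propa.3} to conclude that $A+\omega_0 I$ and $(A+\omega_0 I)^* = A^* + \omega_0 I$ are positive, and then translate back. If anything, your version is more carefully written than the paper's, which contains a couple of typos (referring to $(S(t))$ where $(R(t))$ is meant) and omits the verification of the generator and adjoint identities that you spell out.
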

\begin{proof}
We set $R(t)=e^{-\omega_0 t}S(t)$ for all $t\geq 0$, and it is clearly that $(S(t))_{t\geq 0}$ is a contraction semigroup on $H$, and it is easy to check that the infinitesimal generator of $(S(t))_{t\geq 0}$ is $-B$, where $B=A+\omega_0$. Proposition \ref{S-propa.3} shows that both $B$ and $B^*$ are positive, which implies that both $A$ and $A^*$ are quasi-positive in the sense of \eqref{T-eqb.2}.
\end{proof}

Let $n$ be a positive integer, and $H^n$ be the direct product of the Hilbert space $H$ with the following scalar product and norm
\begin{equation}\nonumber
\aimininner{x}{y}=\sum_{i=1}^n\aimininner{x_i}{y_i}_H,\quad\quad\aiminnorm{x} = \aimininner{x}{x}^{1/2},
\end{equation}
where $x=(x_1,\cdots,x_n)^t$, $y=(y_1,\cdots,y_n)^t$. Then we have
\begin{thm}\label{T-thm2.5}
Let $(A,\mathcal{D}(A))$ be a closed, densely defined operator on a Hilbert space $H^n$, and assume that $-A$ generates a (resp. quasi-)contraction semigroup on $H^n$, and for any non-singular matrix $P\in GL(n,\mathbb{R})$, we define the operator $B$ by $Bx=P^tAPx$, $\forall\,x\in\mathcal{D}(B)$ and 
\[
\mathcal{D}(B)=\{x\in H^n\,:\,Px\in\mathcal{D}(A)\}.
\]
Then the operator $-B$ also generates a (resp. quasi-)contraction semigroup on $H^n$.
\end{thm}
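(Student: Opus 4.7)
The plan is to verify the hypotheses of Theorem \ref{S-thm2.2} (in the contraction case) or Theorem \ref{S-thm2.5} (in the quasi-contraction case) directly for the operator $B$, by transferring the corresponding properties from $A$ via the algebraic identity $B=P^tAP$. The key observation is that $P$ and $P^{-1}$, acting componentwise on $H^n$, are bounded invertible operators, and that the real matrix $P$ satisfies $\aimininner{Px}{y}=\aimininner{x}{P^ty}$ for all $x,y\in H^n$, so for the relation $Bx=P^tAPx$ the algebra is the same as for finite-dimensional matrices.

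First I would check the soft hypotheses. Density of $\mathcal D(B)=P^{-1}\mathcal D(A)$ in $H^n$ is immediate since $P^{-1}$ is a homeomorphism. For closedness of $B$: if $x_k\to x$ and $Bx_k\to z$ in $H^n$, then $Px_k\to Px$ and $APx_k=P^{-t}Bx_k\to P^{-t}z$; the closedness of $A$ gives $Px\in\mathcal D(A)$ with $APx=P^{-t}z$, hence $x\in\mathcal D(B)$ and $Bx=P^tAPx=z$. Next I would identify the adjoint. For $x\in\mathcal D(B)$ and $y$ with $Py\in\mathcal D(A^*)$ one computes
\begin{equation*}
\aimininner{Bx}{y}=\aimininner{P^tAPx}{y}=\aimininner{APx}{Py}=\aimininner{Px}{A^*Py}=\aimininner{x}{P^tA^*Py},
\end{equation*}
which shows $y\in\mathcal D(B^*)$ and $B^*y=P^tA^*Py$. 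For the reverse inclusion, given $y\in\mathcal D(B^*)$ and any $z\in\mathcal D(A)$, set $x=P^{-1}z\in\mathcal D(B)$; then $\aimininner{Az}{Py}=\aimininner{Bx}{y}=\aimininner{x}{B^*y}=\aimininner{z}{P^{-t}B^*y}$, showing $Py\in\mathcal D(A^*)$ with $A^*Py=P^{-t}B^*y$. Hence $\mathcal D(B^*)=P^{-1}\mathcal D(A^*)$ and $B^*=P^tA^*P$.

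Finally I transfer (quasi-)positivity. Since $-A$ generates a (quasi-)contraction semigroup, Proposition \ref{S-propa.3} (respectively Proposition \ref{S-prop2.6}) yields $\aimininner{Au}{u}\geq-\omega_0\aiminnorm{u}^2$ and $\aimininner{A^*u}{u}\geq-\omega_0\aiminnorm{u}^2$ on the respective domains, with $\omega_0=0$ in the contraction case. For $x\in\mathcal D(B)$ I then have
\begin{equation*}
\aimininner{Bx}{x}=\aimininner{APx}{Px}\geq -\omega_0\aiminnorm{Px}^2\geq -\omega_0\aiminnorm{P}^2\aiminnorm{x}^2,
\end{equation*}
and the identical computation for $B^*=P^tA^*P$ gives the same lower bound. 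In the contraction case $\omega_0=0$, so both $B$ and $B^*$ are positive and Theorem \ref{S-thm2.2} applies; in the quasi-contraction case both are quasi-positive with constant $\omega_0\aiminnorm{P}^2$ and Theorem \ref{S-thm2.5} applies. Either way, $-B$ generates a (quasi-)contraction semigroup on $H^n$.

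The only step that requires any care is the identification of $\mathcal D(B^*)$; once that is in hand, the argument is purely algebraic and rests entirely on the semigroup criteria already proven in Appendix \ref{sec-semigroup}.
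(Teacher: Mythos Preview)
Your proof is correct and follows essentially the same approach as the paper: transfer (quasi-)positivity of $A$ and $A^*$ to $B$ and $B^*$ via $\aimininner{Bx}{x}=\aimininner{APx}{Px}$, then invoke Theorem \ref{S-thm2.2} or \ref{S-thm2.5}. Your argument is in fact more careful than the paper's, which simply asserts that $B$ is closed and densely defined (because $P$ is an isomorphism) and that $B^*$ is handled ``in a similar way,'' whereas you explicitly verify closedness and identify $\mathcal D(B^*)=P^{-1}\mathcal D(A^*)$ with $B^*=P^tA^*P$.
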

\begin{proof}
Since $P$ is non-singular, then the map $\mathbb{P}$ induced by $P$ is an isomorphism, where $\mathbb{P} : H^n\mapsto H^n$ given by $\mathbb{P}x= Px$. Hence, it is clear that $(B,\mathcal{D}(B))$ is closed and densely defined. 
If $-A$ generates a (resp. quasi-)contraction semigroup, we notice that $A$ is (resp. quasi-)positive (see \eqref{T-eqb.1} for the meaning of an operator being quasi-positive) by Proposition \ref{S-propa.3} (resp. Proposition \ref{S-prop2.6}), and 
\begin{equation}\begin{split}
\aimininner{Bx}{x} &= \aimininner{P^tAPx}{x} =\aimininner{APx}{Px} \geq 0,\\
\big(\text{resp. }\aimininner{Bx}{x} &= \aimininner{P^tAPx}{x} =\aimininner{APx}{Px} \\
&\geq -\omega_0\aiminnorm{Px}^2 \geq -\omega_0\aiminnorm{P}^2\aiminnorm{x}^2, \text{ for some }\omega_0>0\big),
\end{split}\end{equation}
for all $x\in \mathcal{D}(B)$ (i.e. $Px\in\mathcal{D}(A)$). Hence, $B$ is (resp. quasi-)positive.
We prove in a similar way that $B^*$ is (resp. quasi-)positive. Therefore by Theorem \ref{S-thm2.2} (resp. Theorem \ref{S-thm2.5}), the operator $-B$ also generates a (resp. quasi-)contraction semigroup on $H^n$.
\end{proof}

\section*{Acknowledgments}
This work was partially supported by the National Science Foundation under the grant NSF DMS-1206438 and by the Research Fund of Indiana University.

\bibliographystyle{amsalpha}
\providecommand{\bysame}{\leavevmode\hbox to3em{\hrulefill}\thinspace}
\providecommand{\MR}{\relax\ifhmode\unskip\space\fi MR }
\providecommand{\MRhref}[2]{%
  \href{http://www.ams.org/mathscinet-getitem?mr=#1}{#2}
}
\providecommand{\href}[2]{#2}

\end{document}